\documentclass[10pt]{article}
\usepackage{latexsym,amsfonts,amssymb,amsmath,amsthm}
\usepackage{graphicx}

\usepackage[usenames,dvipsnames]{color}
\usepackage{ulem}

\parindent 0.5cm
\evensidemargin 0cm \oddsidemargin 0cm \topmargin 0cm \textheight
22cm \textwidth 16cm \footskip 2cm \headsep 0cm

\begin{document}
\setlength{\baselineskip}{16pt}

\parindent 0.5cm
\evensidemargin 0cm \oddsidemargin 0cm \topmargin 0cm \textheight
22cm \textwidth 16cm \footskip 2cm \headsep 0cm

\newtheorem{theorem}{Theorem}[section]
\newtheorem{lemma}{Lemma}[section]
\newtheorem{proposition}{Proposition}[section]
\newtheorem{definition}{Definition}[section]
\newtheorem{example}{Example}[section]
\newtheorem{corollary}{Corollary}[section]

\newtheorem{remark}{Remark}[section]

\numberwithin{equation}{section}

\def\p{\partial}
\def\I{\textit}
\def\R{\mathbb R}
\def\C{\mathbb C}
\def\u{\underline}
\def\l{\lambda}
\def\a{\alpha}
\def\O{\Omega}
\def\e{\epsilon}
\def\ls{\lambda^*}
\def\D{\displaystyle}
\def\wyx{ \frac{w(y,t)}{w(x,t)}}
\def\imp{\Rightarrow}
\def\tE{\tilde E}
\def\tX{\tilde X}
\def\tH{\tilde H}
\def\tu{\tilde u}
\def\d{\mathcal D}
\def\aa{\mathcal A}
\def\DH{\mathcal D(\tH)}
\def\bE{\bar E}
\def\bH{\bar H}
\def\M{\mathcal M}
\renewcommand{\labelenumi}{(\arabic{enumi})}

\def\disp{\displaystyle}
\def\undertex#1{$\underline{\hbox{#1}}$}
\def\card{\mathop{\hbox{card}}}
\def\sgn{\mathop{\hbox{sgn}}}
\def\exp{\mathop{\hbox{exp}}}
\def\OFP{(\Omega,{\cal F},\PP)}
\newcommand\JM{Mierczy\'nski}
\newcommand\RR{\ensuremath{\mathbb{R}}}
\newcommand\CC{\ensuremath{\mathbb{C}}}
\newcommand\QQ{\ensuremath{\mathbb{Q}}}
\newcommand\ZZ{\ensuremath{\mathbb{Z}}}
\newcommand\NN{\ensuremath{\mathbb{N}}}
\newcommand\PP{\ensuremath{\mathbb{P}}}
\newcommand\abs[1]{\ensuremath{\lvert#1\rvert}}

\newcommand\normf[1]{\ensuremath{\lVert#1\rVert_{f}}}
\newcommand\normfRb[1]{\ensuremath{\lVert#1\rVert_{f,R_b}}}
\newcommand\normfRbone[1]{\ensuremath{\lVert#1\rVert_{f, R_{b_1}}}}
\newcommand\normfRbtwo[1]{\ensuremath{\lVert#1\rVert_{f,R_{b_2}}}}
\newcommand\normtwo[1]{\ensuremath{\lVert#1\rVert_{2}}}
\newcommand\norminfty[1]{\ensuremath{\lVert#1\rVert_{\infty}}}

\title{Positive Stationary Solutions and Spreading Speeds of KPP Equations
in Locally Spatially Inhomogeneous Media\thanks{Partially supported by NSF grant DMS--0907752}}

\author{
Liang Kong and Wenxian Shen \\
Department of Mathematics and Statistics\\
Auburn University\\
Auburn University, AL 36849\\
U.S.A. }

\date{}
\maketitle

\noindent{\bf Abstract.} The current paper is concerned with positive stationary solutions and
spatial spreading speeds of KPP type evolution equations with random or nonlocal or discrete dispersal
in locally spatially inhomogeneous media. It is shown that such an equation has a unique globally stable
positive stationary solution and has a spreading speed in every direction. Moreover, it is shown
that the  localized spatial inhomogeneity of the medium neither slows down nor speeds up the spatial spreading
in all the directions.

\medskip

\noindent {\bf Key words.} KPP equations, random dispersal, nonlocal dispersal, discrete dispersal, localized
spatial inhomogeneity, spreading speed, positive stationary solution, principal eigenvalue, sub-solution, super-solution, comparison principle.

\medskip

\noindent \noindent {\bf Mathematics subject classification.} 35K57, 45G10, 58D20, 92D25.

\section{Introduction}

The current paper is devoted to the study of spatial spreading dynamics of species in locally spatially inhomogeneous environments or media.
Reaction diffusion equations of the form
\vspace{-0.05in}\begin{equation}
\label{main-eq}
u_t(t,x)=\Delta u(t,x)+ u(t,x)f_1(x,u(t,x)),\quad x\in\RR^N
\vspace{-0.05in}\end{equation}
are widely used to model the population dynamics of many species
in unbounded environments, where $u(t,x)$ is the population density of the
species at time $t$ and location $x$, $\Delta u$ characterizes the
internal interaction of the organisms, and $f_1(x,u)$ represents the
growth rate of the population, which satisfies that $f_1(x,u)<0$ for $u\gg
1$ and $\p_uf_{1}(x,u)<0$ for $u\geq 0$ (see \cite{ArWe1}, \cite{ArWe2}, \cite{CaCo},  \cite{Fif1}, \cite{FiPe}, \cite{Fisher}, \cite{KPP},
\cite{Mur},  \cite{ShKa},
\cite{Ske}, \cite{Wei1}, \cite{Wei2}, \cite{Zha2}, etc.).

 When using \eqref{main-eq} to model the population dynamics of a species,
 it is assumed that the underlying  environment is not patchy and the internal interaction  of the organisms is
 random and local
(i.e.
the organisms move randomly between the adjacent spatial locations).
 In practice, the environments in which many species live may be patchy and/or the internal interaction of the organisms may be nonlocal.
To model the population dynamics of a species  in the case that the underlying  environment is not patchy  but
the internal interaction is nonlocal,
the following nonlocal dispersal equation is often used,
\vspace{-0.05in}\begin{equation}
\label{nonlocal-eq}
u_t(t,x)=\int_{\RR^N} \kappa(y-x)u(t,y)dy-u(t,x)+u(t,x)f_2(x,u(t,x)),\quad x\in\RR^N,
\vspace{-0.05in}\end{equation}
where  $\kappa(\cdot)$ is a smooth
 convolution kernel supported on a ball centered at the origin (that is, there is  a $\delta_0>0$ such that
  $\kappa(z)>0$ if $\|z\|<\delta_0$, $\kappa(z)=0$ if $\|z\|\geq \delta_0$, where
 $\|\cdot\|$ denotes the norm
 in $\RR^N$ and $\delta_0$ represents the nonlocal dispersal distance),  $\int_{\RR^N}\kappa(z)dz=1$, and
 $f_2(\cdot,\cdot)$ is of the same property  as $f_1$ in \eqref{main-eq}
 (see \cite{BaZh},  \cite{ChChRo}, \cite{CoCoElMa}, \cite{CoDyLa}, \cite{Fif2}, \cite{GrHiHuMiVi}, \cite{HuMaMiVi}, \cite{KaLoSh}, \cite{Lee}, \cite{Levin},
 etc.).
 Spatially discrete
dispersal equations of the following form arise when modeling the population dynamics of species living in  patchy environments,
\vspace{-0.08in}\begin{equation}
\label{discrete-eq}
u_t(t,j)=\sum_{k\in K}a_k(u(t,j+k)-u(t,j))+u(t,j)f_3(j,u(t,j)),\quad j\in\ZZ^N,
\vspace{-0.08in}\end{equation}
where $K=\{k\in\ZZ^N\,|\, \|k\|=1\}$, $a_k (k\in K)$ are positive constants,  and $f_3(j,u)<0$ for $u\gg
1$ and $\p_uf_{3}(j,u)<0$ for $u\geq 0$ (see \cite{Fif1},  \cite{MaWuZo}, \cite{Mur},  \cite{ShKa}, \cite{ShSw}, \cite{Wei1}, \cite{Wei2}, etc.).

Spatial spreading dynamics is one of the central dynamical issues of \eqref{main-eq}-\eqref{discrete-eq}.
Roughly speaking, it is about
how fast the population
spreads as time  evolves.  E.g., letting $\mathcal{H}=\RR^N$ in the case \eqref{main-eq} and \eqref{nonlocal-eq} and
$\mathcal{H}=\ZZ^N$ in the case of \eqref{discrete-eq}, $\xi\in S^{N-1}:=\{\xi\in\RR^N|\,\|\xi\|=1\}$,  and a given
 initial population $u_0$  satisfy  for some $\sigma_0>0$ that
 $u_0(x)\geq \sigma_0$ for  $x\in\mathcal{H}$ with $x\cdot\xi\ll -1$ and
 $u_0(x)=0$ for
 $x\in\mathcal{H}$ with $x\cdot\xi\gg 1$ ($x\cdot\xi$ is the inner product of $x$ and $\xi$), how fast
does the population invade into the region  with no population initially?

Since the pioneering works by Fisher \cite{Fisher} and Kolmogorov, Petrowsky, Piscunov
\cite{KPP} on the following special case of \eqref{main-eq}
\vspace{-0.05in}\begin{equation}
\label{classical-fisher-eq}
 u_t(t,x)=u_{xx}(t,x)+u(t,x)(1-u(t,x)),\quad\quad x\in \RR,
\vspace{-0.05in}\end{equation}
a vast amount research has been carried out toward  the spatial spreading dynamics of \eqref{main-eq}-\eqref{discrete-eq}
with $f_i(\cdot,\cdot)$ ($i=1,2,3$) being periodic in the
space variable,
 which reflects the spatial periodicity of the media. See, for example,  \cite{ArWe1}, \cite{ArWe2}, \cite{BeHaNa1}, \cite{BeHaNa2}, \cite{BeHaRo}, \cite{FrGa},
    \cite{Ham}, \cite{HuZi1}, \cite{Kam}, \cite{LiZh1},
    \cite{LiZh2}, \cite{LiYiZh}, \cite{Nad}, \cite{NoRuXi}, \cite{NoXi}, \cite{NoRoRyZl}, \cite{Sat}, \cite{Uch}, \cite{Wei1}, \cite{Wei2}, etc.
    for the study of \eqref{main-eq} in the case that $f_1(x,u)$ is periodic in $x$,
 see \cite{Cov}, \cite{CoDu}, \cite{CoDaMa}, \cite{HeShZh}, \cite{LiSuWa}, \cite{ShZh1}, \cite{ShZh2}, \cite{ShZh3}, etc.
 for the study of \eqref{nonlocal-eq} in the case that $f_2(x,u)$ is periodic in $x$, and see \cite{ChFuGu},  \cite{ChGu1}, \cite{ChGu2},  \cite{GuHa},
  \cite{GuWu1}, \cite{GuWu2}, \cite{HuZi2}, \cite{Lui}, \cite{Wei1}, \cite{Wei2}, \cite{ZiHaHu}, etc.  for the study of \eqref{discrete-eq}
  in the case that $f_3(j,u)$ is periodic in $j$. In such cases, the spatial spreading dynamics is quite well understood.
 For example, consider \eqref{main-eq} and assume that $f_1(x+p_i{\bf e_i}, u)=f_1(x,u)$ for $i=1,2,\cdots,N$, where
 $p_i$ ($i=1,2,\cdots, N$) are positive constants and
 \vspace{-0.05in}$${\bf e_i}=(\delta_{i1},\delta_{i2},
 \cdots,\delta_{iN}), \,\, \delta_{ij}=1\,\, {\rm if} \,\, i=j\,\, {\rm and}\,\, 0\,\, {\rm if}\,\, i\not =j.
 \vspace{-0.05in}$$
 If
 the principal eigenvalue of the following eigenvalue problem associated to the linearized equation of \eqref{main-eq} at $u=0$,
 \vspace{-0.05in}\begin{equation}
 \begin{cases}
 \Delta u(x)+f_1(x,0) u(x)=\lambda u(x),\quad x\in\RR^N\cr
 u(x+p_i{\bf e_i})=u(x),\quad x\in\RR^N,
 \end{cases}
 \vspace{-0.05in}\end{equation}
 is positive, then \eqref{main-eq} has a unique positive stationary solution $u_1^*(\cdot)$
  with $u_1^*(\cdot+p_i{\bf e_i})=u_1^*(\cdot)$ and for any $\xi\in S^{N-1}:=\{\xi\in\RR^N\,|\,\|\xi\|=1\}$, \eqref{main-eq}
 has a spreading speed $c_1^*(\xi)$ in the direction of $\xi$ in the following sense (see Definition \ref{spreading-def}
 for detail): for any given bounded $u_0\in C(\RR^N,\RR^+)$ with $\liminf_{x\cdot\xi \to -\infty}u_0(x)>0$ and
 $u_0(x)=0$ for $x\cdot\xi\gg 1$,
\vspace{-0.05in} $$
\liminf_{t\to\infty}\inf_{x\cdot\xi\leq ct}u_1(t,x;u_0)>0\quad\forall c<c_1^*(\xi)
\vspace{-0.05in}$$
and
\vspace{-0.05in}$$
\limsup_{t\to\infty}\sup_{x\cdot\xi\geq ct}u_1(t,x;u_0)=0\quad \forall c>c_1^*(\xi),
\vspace{-0.05in}$$
where $u_1(t,x;u_0)$ denotes the solution of \eqref{main-eq} with
$u_1(0,x;u_0)=u_0(x)$. Observe that \eqref{main-eq} has also
traveling wave solutions which connect $u_1^*(\cdot)$ and $0$ and
propagate in the direction of $\xi$ with  speeds greater than or
equal $c_1^*(\xi)$ and there is no such traveling wave solution of
slower speed (see \cite{BeHaRo}, \cite{LiZh2}, \cite{She1}, \cite{Wei2} for the definition of
spatially periodic traveling wave
solutions). Hence $c_1^*(\xi)$ is also the minimal wave speed of
traveling wave solutions propagating in the direction of $\xi$. See
\cite{BeHaRo}, \cite{HuZi1}, \cite{LiZh2}, \cite{Wei2} for the above mentioned results for \eqref{main-eq} and
see \cite{ShZh1}, \cite{ShZh2}, \cite{ShZh3} for similar results for \eqref{nonlocal-eq} and
\cite{GuWu1}, \cite{GuWu2}, \cite{HuZi2}, \cite{LiZh2}, \cite{Wei2},  \cite{ZiHaHu} for similar results for \eqref{discrete-eq}.

 In the current paper, we consider \eqref{main-eq}-\eqref{discrete-eq}
 in the case that the growth rates depend on the space variable, but only when it is in some bounded
subset of the underlying media, which reflects the localized spatial
inhomogeneity of the media. More precisely, let
\vspace{-0.05in}\begin{equation}
\label{habitat-eq}
\begin{cases}
\mathcal{H}_1=\mathcal{H}_2=\RR^N\cr
\mathcal{H}_3=\ZZ^N.
\end{cases}
\vspace{-0.05in}\end{equation}
We assume

\smallskip

\noindent{\bf (H1)} {\it $f_i:\mathcal{H}_i\times\RR\to\RR$  is a
$C^2$ function, $f_i(x,u)<0$ for all
$(x,u)\in\mathcal{H}_i\times\RR^+$ with $u\ge \beta_0$ for some $\beta_0>0$, and  $\p_uf_i(x,u)<0$ for
all $(x,u)\in\mathcal{H}_i\times\RR^+$, where $i=1,2,3$.}

\smallskip

\noindent{\bf (H2)} {\it  $f_i(x,u)=f_i^0(u)$ for some
$C^2$ function $f_i^0:\RR\to\RR$ and all $(x,u)\in\mathcal{H}_i\times
\RR$ with $\|x\|\ge L_0$ for some $L_0>0$, and $f_i^0(0)>0$, where $i=1,2,3$.}

\smallskip

Assume (H1) and (H2).
  Then
\eqref{main-eq}, \eqref{nonlocal-eq}, and \eqref{discrete-eq} have the following limit equations as
$\|x\|\to\infty$ or $\|j\|\to\infty$,
\vspace{-0.05in}\begin{equation}
\label{limit-main-eq} u_t(t,x)=\Delta u(t,x)+u(t,x) f_1^0(u(t,x)),\quad x\in\RR^N,
\vspace{-0.05in}\end{equation}
\vspace{-0.05in}\begin{equation}
\label{limit-nonlocal-eq}
u_t(t,x)=\int_{\RR^N}\kappa(y-x)u(t,y)dy-u(t,x)+u(t,x)f_2^0(u(t,x)),\quad x\in\RR^N,
\vspace{-0.05in}\end{equation}
and
\vspace{-0.05in}\begin{equation}
\label{limit-discrete-eq} u_t(t,j)=\sum_{k\in
K}a_k(u(t,j+k)-u(t,j))+u(t,j)f_3^0(u(t,j)),\quad j\in\ZZ^N.
\vspace{-0.05in}\end{equation}
Equations \eqref{limit-main-eq}, \eqref{limit-nonlocal-eq}, and \eqref{limit-discrete-eq}
will  play an important role in the study of \eqref{main-eq},
\eqref{nonlocal-eq}, and \eqref{discrete-eq}. Clearly,
\eqref{limit-main-eq} has similar spatial spreading dynamics as
that of \eqref{classical-fisher-eq}, that is, it
 has a unique positive constant solution $u_1^{0}$ and
 has a spatial spreading speed $c^{0}_1(\xi)$ in the direction of $\xi$
for every $\xi\in S^{N-1}$. Equations \eqref{limit-nonlocal-eq} (resp. \eqref{limit-discrete-eq}) has similar properties as  that of \eqref{limit-main-eq},
that is, \eqref{limit-nonlocal-eq} (resp. \eqref{limit-discrete-eq}) has a unique positive constant stationary solution
$u_2^0$ (resp. $u_3^0$) and has a spatial spreading speed $c^{0}_2(\xi)$ (resp. $c^0_3(\xi)$) in the direction of $\xi$
for every $\xi\in S^{N-1}$ (see Definition \ref{spreading-def} for detail).

Our objective is to explore   the
spatial spreading dynamics of \eqref{main-eq}-\eqref{discrete-eq} with localized spatial inhomogeneity.
The main results of this paper can be summarized as follows:

\smallskip

\noindent $\bullet$ {\it Assume ${\rm (H1)}$ and ${\rm (H2)}$. Then \eqref{main-eq}
$($resp. \eqref{nonlocal-eq}, \eqref{discrete-eq}$)$ has a unique
 positive stationary solution $u^*_1\in C(\RR^N, \RR^+)$
$($resp. $u_2^*\in C(\RR^N,\RR^+)$, $u_3^*\in C(\ZZ^N,\RR^+))$
satisfying that $\inf_{x\in\RR^N} u_1^*(x)>0$ $($resp.
$\inf_{x\in\RR^N}u_2^*(x)>0$, $\inf_{j\in\ZZ^N}u_3^*(j))$ and
$\lim_{\|x\|\to \infty} u_1^*(x)=u_1^{0}$ $($resp.
$\lim_{\|x\|\to\infty}u_2^*(x)=u_2^0$,
$\lim_{\|j\|\to\infty}u_3^*(j)=u_3^0)$. Moreover, $u=u_i^*(\cdot)$ is  globally asymptotically
stable with respect to positive perturbations $($and hence $u\equiv 0$ is an unstable stationary
solution of $(1.i))$ $(i=1,2,3)$ } (see Theorem
\ref{positive-solution-thm}).

\smallskip

\noindent $\bullet$  {\it Assume ${\rm (H1)}$ and ${\rm (H2)}$. Then \eqref{main-eq}
$($resp. \eqref{nonlocal-eq}, \eqref{discrete-eq}$)$ has a spatial
spreading speed $c^{*}_1(\xi)$ $($resp. $c_2^*(\xi)$,  $c^*_3(\xi))$
in the direction of $\xi$ for every $\xi\in S^{N-1}$ $($see
Definition \ref{spreading-def} for the definition of spreading
speeds$)$. Moreover, $c_1^*(\xi)=c_1^0(\xi)$ $($resp.
$c_2^*(\xi)=c_2^0(\xi)$, $c_3^*(\xi)=c_3^0(\xi))$ for all $\xi\in
S^{N-1}$, where $c_1^0(\xi)$ $($resp. $c_2^0(\xi)$, $c_3^0(\xi))$ is
the spatial spreading speed of \eqref{limit-main-eq} $($resp.
\eqref{limit-nonlocal-eq}, \eqref{limit-discrete-eq}$)$ in the
direction of $\xi$} (see Theorem \ref{spreading-speed-thm}).

\smallskip

\noindent $\bullet$ {\it Assume ${\rm (H1)}$ and ${\rm (H2)}$. Then the solution of
\eqref{main-eq} $($resp. \eqref{nonlocal-eq}, \eqref{discrete-eq}$)$
with a nonnegative initial data which has a nonempty compact set
spreads neither slower than $\inf\{c_1^*(\xi)|\xi\in S^{N-1}\}$
$($resp $\inf\{c_2^*(\xi)|\xi\in S^{N-1}\}$,
$\inf\{c_3^*(\xi)|\xi\in S^{N-1}\})$  nor faster than
$\sup\{c_1^*(\xi)|\xi\in S^{N-1}\}$ $($resp $\sup\{c_2^*(\xi)|\xi\in
S^{N-1}\}$, $\sup\{c_3^*(\xi)|\xi\in S^{N-1}\})$ } (see Theorem
\ref{spreading-speed-thm1} for detail).

\smallskip

The above results reveal such an important biological scenario: the localized spatial inhomogeneity  of the
media does not prevent the population to persist and  to spread,  moreover, it
neither slows down nor
speeds up the spatial spread of the population.

It should be pointed out that the authors of \cite{NoRoRyZl} considered the transition fronts, which are  generalizations of traveling wave solutions,
 of \eqref{main-eq} in the case that
$N=1$,  $f(x,1)=0$, and $f(x,0)>0$.
They provided conditions under which transition fronts of \eqref{main-eq} exist and also showed that
\eqref{main-eq} may not have transition fronts. Hence the localized spatial inhomogeneity of the media
may prevent
the existence of transition fronts.

We remark that in literature \eqref{main-eq} (resp. \eqref{nonlocal-eq}, \eqref{discrete-eq}) with $f_1(x,u)$ (resp. $f_2(x,u)$, $f_3(j,u)$)
being decreasing in $u$ and negative for $u\gg 1$  and $u\equiv 0$ being an unstable solution is called a Fisher type or KPP type or monostable equation.
The reader is referred to \cite{BeHa}, \cite{MeRoSi}, \cite{NoRoRyZl}, and references therein
for the study of transition solutions of general spatially inhomogeneous Fisher or KPP type equations
and to \cite{HuSh}, \cite{She2}-\cite{She5} for the study of spatial spreading dynamics of
general temporally inhomogeneous Fisher or KPP type equations.

We also remark that it would be interesting to study the spatial spreading dynamics of KPP type equations in inhomogeneous media with
more general limit media, say, equation (1.i) $(i=1,2,3$) with $f_i(x,u)$ being replaced by $f_i(t,x,u)$ satisfying that
$f_i(t,x,u)-f_i^0(t,x,u)\to 0$ as $\|x\|\to \infty$ for some function $f_i^0(t,x,u)$  which is periodic in $t$ and/or $x$. We will consider
such general case elsewhere.

The rest of the paper is organized as follows. In section 2, we introduce
 the standing notions to be used in the paper and the definition of spreading speeds and
 state the main results of the paper (i.e. Theorems \ref{positive-solution-thm}, \ref{spreading-speed-thm}, and \ref{spreading-speed-thm1}).
 In section 3,  we present  some preliminary materials to be used
in later sections.  Section 4 is devoted to the study of  positive stationary solutions of \eqref{main-eq}-\eqref{discrete-eq}.  Theorem \ref{positive-solution-thm}
is proved in this section.
In section 5, we explore the existence of spreading speeds of \eqref{main-eq}-\eqref{discrete-eq}
and prove Theorems \ref{spreading-speed-thm} and \ref{spreading-speed-thm1}.

\section{Standing Notions, Definitions, and Main Results}

In this section, we first  introduce some standing notations and the definition of spreading speeds.
We then state the main results of the paper.

Let $\mathcal{H}_i$ be as in \eqref{habitat-eq}.
Let $p=(p_1,p_2,\cdots,p_N)$ with $p_i>0$ for $i=1,2,\cdots, N$. We define the Banach spaces
$X_{i,p}$ ($i=1,2$) by
\vspace{-0.05in}\begin{equation}
\label{x1-p-space}
X_{1,p}=\{u\in C(\RR^N,\RR)\,|\, u(\cdot+p_{i}{\bf e_{i}})=u(\cdot),\quad i=1,...,N\}
\vspace{-0.05in}\end{equation}
with norm $\|u\|_{X_{1,p}}=\max_{x\in\RR^N}|u(x)|$, and
\vspace{-0.05in}\begin{equation}
\label{x2-p-space}
X_{2,p}=X_{1,p}
\vspace{-0.05in}\end{equation}
(the introduction of $X_{2,p}$ is for the convenience in notation).
If $p_i\in \NN$, we define $X_{3,p}$ by
\vspace{-0.05in}\begin{equation}
\label{x3-p-space}
X_{3,p}=\{u\in C(\ZZ^N,\RR)\,|\, u(\cdot+p_i{\bf e_i})=u(\cdot),\,\,\, i=1,2,\cdots,N\}
\vspace{-0.05in}\end{equation}
with norm $\|u\|_{X_{3,p}}=\max_{j\in\ZZ^N}|u(j)|$.
Let
\vspace{-0.05in}\begin{equation}
\label{x-p-positive}
X_{i,p}^+=\{u\in X_{i,p}\,|\, u(x)\geq 0\,\, \forall x\in\mathcal{H}_i\}
\vspace{-0.05in}\end{equation}
and
\vspace{-0.05in}\begin{equation}
\label{x-p-positive-positive}
X_{i,p}^{++}=\{u\in X_{i,p}\,|\, u(x)> 0\,\, \forall x\in\mathcal{H}_i\}
\vspace{-0.05in}\end{equation}
for  $i=1,2,3$.
We define $X_i$ ($i=1,2,3$) by
\vspace{-0.05in}\begin{equation}
\label{x1-space}
X_1=\{u\in C(\RR^N,\RR)\,|\, u\,\,\, \text{is uniformly continuous and bounded}\}
\vspace{-0.05in}\end{equation}
with norm $\|u\|_{X_1}=\sup_{x\in\RR^N}|u(x)|$,
\vspace{-0.05in}\begin{equation}
\label{x2-space}
X_2=X_1
\vspace{-0.05in}\end{equation}
(again the introduction of $X_2$ is for the convenience in notation),
and
\vspace{-0.05in}\begin{equation}
\label{x3-space}
X_3=\{u\in C(\ZZ^N,\RR)\,|\, u\,\,\, \text{is bounded}\}
\vspace{-0.05in}\end{equation}
with norm $\|u\|_{X_3}=\sup_{j\in\ZZ^N}|u(j)|$.
Let
\vspace{-0.05in}\begin{equation}
\label{x-positive}
X^+_i=\{u\in X_i\,|\, u(x)\geq 0\,\,\forall x\in\mathcal{H}_i\}
\vspace{-0.05in}\end{equation}
and
\vspace{-0.05in}\begin{equation}
\label{x-positive-positive}
X_i^{++}=\{u\in X_i^+\,|\, \inf_{x\in\mathcal{H}_i}u(x)>0\}
\vspace{-0.05in}\end{equation}
for $i=1,2,3$.

If no confusion occurs, we may  write $\|\cdot\|_{X_{i,p}}$ and $\|\cdot\|_{X_i}$ as
$\|\cdot\|$ $(i=1,2,3$).

Assume ${\rm (H1)}$. By general semigroup theory (see \cite{Hen},
\cite{Paz}), for any $u_0\in X_1$ (resp. $u_0\in X_2$, $u_0\in
X_3$), \eqref{main-eq} (resp. \eqref{nonlocal-eq},
\eqref{discrete-eq}) has a unique local solution $u_1(t,\cdot;u_0)$
 (resp. $u_2(t,\cdot;u_0)$,
$u_3(t,\cdot;u_0)$) with $u_1(0,\cdot;u_0)=u_0(\cdot)$ (resp. $u_2(0,\cdot;u_0)=u_0(\cdot)$, $u_3(0,\cdot;u_0)=u_0(\cdot)$). Moreover, if $u_0\in X_i^+$, then
$u_i(t,\cdot;u_0)$ exist and $u_i(t,\cdot;u_0)\in X_i^+$ for all
$t\geq 0$ ($i=1,2,3$) (see Proposition
\ref{basic-global-existence}).

Let
\vspace{-0.05in}\begin{equation}
\label{unit-sphere}
S^{N-1}=\{\xi\in\RR^N\,|\, \|\xi\|=1\}.
\vspace{-0.05in}\end{equation}
For given $\xi\in S^{N-1}$ and $u\in X_i^+$,
we define
\vspace{-0.05in}$$
\liminf_{x\cdot\xi\to -\infty}u(x)=\liminf_{r\to -\infty}\inf_{x\in\mathcal{H}_i,x\cdot\xi\leq r}u(x).
\vspace{-0.05in}$$
For given $u:[0,\infty)\times\mathcal{H}_i\to \RR$ ($1\le i\le 3$) and $c>0$, we define
\vspace{-0.05in}$$
\liminf_{x\cdot\xi\leq ct, t\to\infty}u(t,x)=\liminf_{t\to\infty}\inf_{x\in\mathcal{H}_i,x\cdot\xi\leq ct}u(t,x),
\vspace{-0.05in}$$
$$
\limsup_{x\cdot\xi\geq ct,t\to\infty}u(t,x)=\limsup_{t\to\infty}\sup_{x\in\mathcal{H}_i,x\cdot\xi\geq ct}u(t,x).
$$
The notions $\disp\limsup_{|x\cdot\xi|\leq ct,t\to\infty}u(t,x)$, $\disp\limsup_{|x\cdot\xi|\ge ct,t\to\infty}u(t,x)$,
$\disp\limsup_{\|x\|\leq ct,t\to\infty}u(t,x)$, and $\disp\limsup_{\|x\|\geq ct,t\to\infty}u(t,x)$ are defined similarly.
We define $X_i^+(\xi)$ ($i=1,2,3$) by
\vspace{-0.05in}\begin{equation}
\label{x-xi-space}
X_i^+(\xi)=\{u\in X_i^+\,|\, \liminf_{x\cdot\xi\to -\infty}u(x)>0,\quad u(x)=0\,\, {\rm for}\,\,  x\cdot\xi\gg 1\}.
\vspace{-0.05in}\end{equation}

\begin{definition} [Spatial spreading speed]
\label{spreading-def}
For given $\xi\in S^{N-1}$ and given $i\in\NN$ $(1\leq i\leq 3)$, a real number $c_i^*(\xi)$
 is called the {\rm spatial spreading speed} of (1.i) in the direction of $\xi$ if for any $u_0\in X_i^+(\xi)$,
\vspace{-0.05in}$$
\liminf_{x\cdot\xi\leq ct, t\to \infty}u_i(t,x;u_0)>0\quad \forall c<c_i^*(\xi)
\vspace{-0.05in}$$
and
\vspace{-0.05in}$$
\limsup_{x\cdot\xi\geq ct, t\to\infty}u_i(t,x;u_0)=0\quad \forall c>c_i^*(\xi).
\vspace{-0.05in}$$
\end{definition}

The main results of this paper are stated in the following three theorems.

\begin{theorem} [Positive stationary solutions]
\label{positive-solution-thm}
Assume ${\rm (H1)}$ and ${\rm (H2)}$.
\begin{itemize}
\vspace{-0.05in}\item[(1)] $($Existence$)$
 Equation \eqref{main-eq} $($resp. \eqref{nonlocal-eq},
 \eqref{discrete-eq}$)$
  has a unique stationary solution $u=u_1^*(\cdot)\in X_1^{++}$
  $($resp. $u=u_2^*(\cdot)\in X_2^{++}$, $u=u_3^*(\cdot)\in X_3^{++})$. Moreover,
$$\lim_{r\to\infty}\sup_{x\in\mathcal{H}_i,\|x\|\geq r}|u_i^*(x)-u_i^0|=0,$$
where $u_i^0>0$ is such that $f_i^0(u_i^0)=0$ and $i=1,2,3$.

\vspace{-0.05in}\item[(2)] $($Stability$)$
For any $u_0\in X_i^{++}$, $\lim_{t\to\infty}\|u_i(t,\cdot;u_0)-u_i^*(\cdot)\|_{X_i}=0.$

\vspace{-0.05in}\item[(3)] $($Stability$)$ For any $u_0\in X_i^+\setminus\{0\}$,
$\lim_{t\to\infty} u_i(t,x;u_0)=u_i^*(x)$
uniformly in $x$ on bounded sets.
\end{itemize}
\end{theorem}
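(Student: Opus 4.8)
The plan is to handle the three equations (1.1)--(1.3) in a unified way, treating them as abstract parabolic/semilinear equations generated by a dispersal operator with a comparison principle, and to obtain the positive stationary solution as the limit of a monotone iteration squeezed between an explicit sub-solution and an explicit super-solution. For the super-solution, note that by (H1) the function $f_i(x,u)$ is negative for $u\ge\beta_0$ uniformly in $x$, so the constant $M_i:=\max\{\beta_0,\,u_i^0,\,\sup_{\|x\|\le L_0}(\text{relevant bound})\}$ (finite since $f_i$ is $C^2$ and the nonlocal/discrete dispersal operators annihilate constants while $\Delta$ annihilates constants) is a super-solution; any constant $\ge M_i$ decays under the flow. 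For the sub-solution, the key point is that $u\equiv 0$ must be linearly unstable: since $f_i^0(0)>0$ and the inhomogeneity is localized, the linearization at $0$ on the whole space $\mathcal H_i$ has positive spectral bound (this should be available from the principal eigenvalue/generalized principal eigenvalue theory cited in Section 3 — the localized perturbation cannot lower the spectral bound of the limit operator below $f_i^0(0)>0$, and it can only raise it). I would then use a positive (approximate) principal eigenfunction, truncated or combined with a small constant, to build a strictly positive sub-solution $\underline u_i$ with $0<\underline u_i\le M_i$; on large balls one uses Dirichlet principal eigenvalues, which converge from below to the whole-space spectral bound, so for $R$ large the ball-$R$ principal eigenvalue is positive and its eigenfunction gives a compactly supported sub-solution, which one then can enlarge.

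With sub- and super-solutions in hand, the time-$t$ flow $u_i(t,\cdot;\underline u_i)$ is nondecreasing in $t$ and bounded above by $M_i$ (comparison principle, Proposition on global existence/monotonicity from Section 3), so it converges pointwise, and by parabolic/convolution regularity and compactness the limit $u_i^*$ is a stationary solution with $0<\underline u_i\le u_i^*\le M_i$; in particular $u_i^*\in X_i^{++}$ once we know $\inf u_i^*>0$, which follows because $u_i^*\ge u_i^0/2$ outside a large ball (argued below) and is continuous and strictly positive on the compact remaining region. The behavior at infinity, $\lim_{r\to\infty}\sup_{\|x\|\ge r}|u_i^*(x)-u_i^0|=0$, I would prove by comparing $u_i^*$ with spatially shifted sub/super-solutions of the limit equation (1.i+6): for any $\varepsilon>0$, $u_i^0-\varepsilon$ is a sub-solution and $u_i^0+\varepsilon$ a super-solution of the limit equation on the region $\|x\|\ge L_0$, and using that $u_i^*$ is bounded away from $0$ and $\infty$ together with the exponential relaxation of the spatially homogeneous ODE/limit dynamics, a barrier argument on half-spaces or large balls pins $u_i^*$ between $u_i^0\pm\varepsilon$ for $\|x\|$ large.

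For uniqueness and the two stability statements (2) and (3), the workhorse is the sub-linearity of the nonlinearity: $\partial_u f_i(x,u)<0$ for $u\ge 0$ means $u\mapsto u f_i(x,u)$ is strictly sublinear, so the semiflow is strongly monotone and strictly sub-homogeneous, and a part-metric / Krause-type argument (or the standard ``sweeping'' argument with $u^*$ and $\lambda u^*$) gives: any two stationary solutions in $X_i^{++}$ coincide, hence uniqueness; and for any $u_0\in X_i^{++}$, choosing $0<\alpha\le 1\le \gamma$ with $\alpha u_i^*\le u_0\le \gamma u_i^*$ (possible since $\inf u_i^*>0$ and $u_0$ is bounded with positive infimum), the solutions from $\alpha u_i^*$ and $\gamma u_i^*$ both converge monotonically to $u_i^*$ and squeeze $u_i(t,\cdot;u_0)$, giving (2) in the sup norm. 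For (3), given $u_0\in X_i^+\setminus\{0\}$, the strong positivity of the semiflow makes $u_i(t_0,\cdot;u_0)$ strictly positive and, on any fixed bounded set, bounded below by a positive constant for $t_0>0$ fixed; comparing below with the flow from a small compactly-supported sub-solution that itself grows up to $u_i^*$ locally (the spreading/persistence argument, again using instability of $0$), and above with the flow from a large constant, yields local uniform convergence to $u_i^*$. The main obstacle I anticipate is establishing the strict positivity of the spectral bound of the linearization at $0$ on the unbounded domain $\mathcal H_i$ and getting a genuinely usable positive sub-solution out of it in the nonlocal and discrete cases, where there is no compact resolvent and one must work with generalized principal eigenvalues and Dirichlet-ball approximations rather than with a classical principal eigenfunction; everything after that is a fairly standard monotone-dynamical-systems package (comparison principle, monotone iteration, sub-homogeneity) that the preliminaries in Section 3 should supply.
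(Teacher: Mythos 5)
Your overall architecture --- squeeze between a small positive sub\mbox{-}solution and a large constant super\mbox{-}solution, monotone iteration to produce $u_i^*$, a part\mbox{-}metric argument for uniqueness, and control of the behavior at infinity via the limit equation --- is the same as the paper's. But there is a genuine gap at exactly the step you flag as the ``main obstacle'': the construction of a strictly positive sub\mbox{-}solution in the nonlocal and discrete cases. Your route through the spectral bound of the linearization at $0$ on all of $\mathcal{H}_i$, approximated from below by Dirichlet principal eigenvalues on large balls, does not go through for the nonlocal operator: as the paper notes (see the example cited from \cite{ShZh1}), the spectral bound of a nonlocal dispersal operator need not be an eigenvalue with a positive continuous eigenfunction --- on the torus or on bounded domains --- so there may be no ``principal eigenfunction on a large ball'' to truncate. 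The paper circumvents this entirely: Lemma \ref{tech-lm1} minorizes $f_i(\cdot,0)$ by a smooth \emph{spatially periodic} function $h_i$ whose partial derivatives up to order $N-1$ vanish at a maximizer (so that Proposition \ref{existence-principal-eigenvalue}(1) guarantees the periodic principal eigenfunction exists) and whose spatial average exceeds $f_i^0(0)-\epsilon$ (so that Proposition \ref{lower-bound-pe} forces the periodic principal eigenvalue to be positive); $\delta$ times that periodic eigenfunction is the sub\mbox{-}solution of Lemma \ref{tech-lm3}. You would need this, or an equivalent device, to complete the existence proof for $i=2$ (and, with minor changes, $i=3$).

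A secondary gap: both your uniqueness argument and your proof of (2) in the sup norm over the unbounded domain $\mathcal{H}_i$ silently require the fact that every stationary solution in $X_i^{++}$ tends to $u_i^0$ at infinity (the paper's Lemma \ref{tech-lm2}). Without it, the part metric between two stationary solutions can be shown to decrease strictly only pointwise, not as a supremum over an unbounded domain, so the Krause\mbox{-}type argument does not close; and Dini's theorem gives only locally uniform convergence in (2), not convergence in $\|\cdot\|_{X_i}$. The paper establishes the behavior at infinity not by your exterior\mbox{-}domain barrier (which is delicate for the nonlocal equation, where comparison on $\{\|x\|\ge L_0\}$ with data prescribed only on an annulus is not immediate) but by a translation\mbox{-}compactness argument: the translates $u_i^*(\cdot+x_n)$ with $\|x_n\|\to\infty$ converge to a bounded positive stationary solution of the homogeneous limit equation, which by Proposition \ref{periodic-prop1} must be the constant $u_i^0$; here Lemma \ref{tech-lm0} supplies the needed regularity of the nonlocal limit. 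The same compactness device, through Proposition \ref{basic-convergence}, is what upgrades the locally uniform convergence in (2) to convergence in the $X_i$ norm. Finally, note that the paper proves (3) by reducing it to the spreading result of Theorem \ref{spreading-speed-thm1}(4), which again rests on a periodic minorant of $f_i$ (Lemma \ref{technical-lm2}) rather than on a compactly supported sub\mbox{-}solution.
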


\begin{theorem} [Existence and characterization of spreading speeds]
\label{spreading-speed-thm} Assume ${\rm (H1)}$ and ${\rm (H2)}$. Then for any given
$\xi\in S^{N-1}$, \eqref{main-eq} $($resp. \eqref{nonlocal-eq},
\eqref{discrete-eq}$)$ has a spreading speed $c_1^*(\xi)$
$($resp. $c_2^*(\xi)$, $c_3^*(\xi))$ in the direction of $\xi$. Moreover,
for any $u_0\in X_i^+(\xi)$,
\vspace{-0.05in}\begin{equation}
\label{spreading-eq0}
\liminf_{x\cdot\xi\leq ct, t\to \infty}|u_i(t,x;u_0)-u_i^*(x)|=0\quad \forall c<c_i^*(\xi),
\vspace{-0.05in}\end{equation}
and
\vspace{-0.05in} $$c_i^*(\xi)=c_i^0(\xi)\quad \text{for}\quad  i=1,2,3,
\vspace{-0.05in} $$
 where
\vspace{-0.05in}\begin{equation}
\label{c-1-0-eq}
c_1^0(\xi)=\inf_{\mu>0}\frac{f_1^0(0)+\mu^2}{\mu}= 2\sqrt {f_1^0(0)},
\vspace{-0.05in}\end{equation}
\vspace{-0.05in}\begin{equation}
\label{c-2-0-eq}
c_2^0(\xi)=\inf_{\mu>0}\frac{\int_{\RR^N}e^{-\mu z\cdot\xi}\kappa(z)dz-1+f_2^0(0)}{\mu},
\vspace{-0.05in}\end{equation}
and
\vspace{-0.05in}\begin{equation}
\label{c-3-0-eq}
c_3^0(\xi)= \inf_{\mu>0}\frac{\sum_{k\in K}a_k(e^{-\mu k\cdot\xi}-1)+f_3^0(0)}{\mu}
\vspace{-0.05in}\end{equation}
are the spatial spreading speeds of \eqref{limit-main-eq}, \eqref{limit-nonlocal-eq}, and \eqref{limit-discrete-eq} in the direction of $\xi$, respectively.
\end{theorem}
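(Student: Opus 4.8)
\medskip
\noindent\emph{Proof strategy.} The plan is to sandwich the spreading speed: I would show that $(1.i)$ spreads in the direction $\xi$ no faster than $c_i^0(\xi)$ and no slower than $c_i^0(\xi)$ in the sense of Definition~\ref{spreading-def}, which simultaneously yields the existence of $c_i^*(\xi)$ and the identity $c_i^*(\xi)=c_i^0(\xi)$. It is convenient to treat the three cases uniformly: writing $g_1(\mu)=\mu^2$, $g_2(\mu)=\int_{\RR^N}e^{-\mu z\cdot\xi}\kappa(z)\,dz-1$ and $g_3(\mu)=\sum_{k\in K}a_k(e^{-\mu k\cdot\xi}-1)$, the exponential $x\mapsto e^{-\mu x\cdot\xi}$ is an eigenfunction of the linear dispersal part of $(1.i)$ with eigenvalue $g_i(\mu)$, and $c_i^0(\xi)=\inf_{\mu>0}(g_i(\mu)+f_i^0(0))/\mu>0$, the infimum being attained at some $\mu_i^*>0$. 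I will freely use the comparison principles and semigroup properties of Section~3, Theorem~\ref{positive-solution-thm}, and the classical spreading results for the homogeneous limit equations \eqref{limit-main-eq}, \eqref{limit-nonlocal-eq}, \eqref{limit-discrete-eq} (Aronson--Weinberger type), including the fact that compactly supported or half-space type nonnegative data spread at speed $c_i^0(\xi)$ in the direction $\xi$.

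For the upper bound, fix $c>c_i^0(\xi)$ and choose $\mu>0$ with $\mu c>g_i(\mu)+f_i^0(0)$ (possible since $\mu=\mu_i^*$ works). Given $u_0\in X_i^+(\xi)$, pick $R_0$ with $u_0(x)=0$ for $x\cdot\xi\ge R_0$, set $M=\max\{\beta_0,\|u_0\|,u_i^0\}$ — so that $f_i(x,M)<0$ for all $x$ by (H1) — and $\tau=\max\{R_0,L_0\}/c$. Then $\bar u(t,x):=M\min\{1,e^{-\mu(x\cdot\xi-c(t+\tau))}\}$ dominates $u_0$ at $t=0$ (it equals $M\ge\|u_0\|$ where $x\cdot\xi\le c\tau$, and is $\ge 0=u_0(x)$ where $x\cdot\xi>c\tau\ge R_0$), and it is a super-solution of $(1.i)$: where $\bar u$ follows the exponential branch one has $x\cdot\xi>c(t+\tau)\ge L_0$, hence $\|x\|\ge L_0$ and $f_i(x,\cdot)=f_i^0(\cdot)$, so $\partial_t\bar u$ minus the dispersal term minus $\bar u f_i(x,\bar u)$ is at least $(\mu c-g_i(\mu)-f_i^0(0))\bar u>0$ (using $\partial_uf_i^0<0$ and that the exponential is an eigenfunction); where $\bar u\equiv M$ the dispersal term is $\le0$ (the minimum of two super-solutions is a super-solution, which for the nonlocal and discrete operators holds because replacing $\bar u$ by a pointwise smaller function does not increase the nonlocal term) while $\partial_t\bar u=0$ and $Mf_i(x,M)<0$. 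Comparison gives $u_i(t,\cdot;u_0)\le\bar u(t,\cdot)$, so $\sup_{x\cdot\xi\ge c't}u_i(t,x;u_0)\le Me^{\mu c\tau}e^{-\mu(c'-c)t}\to0$ for every $c'>c$; letting $c'\downarrow c_i^0(\xi)$ finishes the upper bound.

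For the lower bound and \eqref{spreading-eq0}, fix $c<c_i^0(\xi)$ and $u_0\in X_i^+(\xi)$. Since $u_0\in X_i^+\setminus\{0\}$, Theorem~\ref{positive-solution-thm}(3) gives, for any radius $\rho$, a time $T_\rho$ with $u_i(T_\rho,\cdot;u_0)\ge\kappa_0:=\tfrac12\inf_{\mathcal H_i}u_i^*>0$ on $\{\|x\|\le\rho\}$; taking $\rho$ large, the solution is then $\ge\kappa_0$ on a compact set inside the homogeneous region $\{\|x\|\ge L_0\}$ lying just beyond the obstacle $\{\|x\|<L_0\}$ on the $+\xi$ side. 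Restarting there and comparing from below with the homogeneous equation (which governs $(1.i)$ on $\{\|x\|\ge L_0\}$) seeded by such a bump, together with the initial positivity $u_0\ge\sigma_0>0$ on the half-space $\{x\cdot\xi\le-R_1\}\subset\{\|x\|\ge L_0\}$, and using that the homogeneous equation has spreading speed $c_i^0(\xi)$: every $x$ with $x\cdot\xi\le ct$ outside a fixed bounded neighbourhood of the obstacle is reached before time $t$ once $t$ is large (skirting the bounded obstacle costs only a bounded delay, which is asymptotically irrelevant), so $u_i(t,x;u_0)$ is bounded below there; on the bounded neighbourhood Theorem~\ref{positive-solution-thm}(3) applies directly. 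Hence $\liminf_{x\cdot\xi\le ct,\,t\to\infty}u_i(t,x;u_0)>0$ for all $c<c_i^0(\xi)$, and with the upper bound $c_i^*(\xi)$ exists and equals $c_i^0(\xi)$, yielding \eqref{c-1-0-eq}--\eqref{c-3-0-eq}. Finally \eqref{spreading-eq0} follows by squeezing: from above $u_i(t,\cdot;u_0)\le u_i(t,\cdot;\max\{u_0,\eta\})\to u_i^*$ uniformly on $\mathcal H_i$ by Theorem~\ref{positive-solution-thm}(2) for small $\eta>0$; from below on $\{x\cdot\xi\le ct\}$ by the argument just given, using $u_i^*(x)\to u_i^0$ as $\|x\|\to\infty$ and Theorem~\ref{positive-solution-thm}(3) on bounded sets.

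I expect the upper bound to be routine — the only subtlety is the flat cap $M$ that keeps the exponential wave out of the region where $f_i\ne f_i^0$. The hard part will be the lower bound: hypothesis (H1) allows $f_i(x,0)<0$ on the bounded inhomogeneity set, so $(1.i)$ may be effectively lethal there, and one must prove that the invading wave nonetheless gets across (or around) this bounded bad set and then resumes the unperturbed speed. The heart of the matter is quantifying ``a bounded obstacle causes only a bounded delay'', and this forces one to combine the global attractivity of $u_i^*$ (to pin the solution from below on and near the obstacle after a finite time) with an application of the homogeneous spreading estimates in the exterior of the obstacle that is uniform in the transverse directions, since Definition~\ref{spreading-def} requires the infimum over the whole moving region $\{x\cdot\xi\le ct\}$.
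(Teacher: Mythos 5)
Your upper bound is sound and is essentially the paper's argument: an exponential super\hyphen{}solution travelling at speed $c=\lambda_i(\mu,\xi)/\mu>c_i^0(\xi)$ works because (H1) forces $f_i(x,u)<0<f_i^0(0)$ wherever the exponential exceeds the saturation level $M$, while (H2) forces $f_i=f_i^0\le f_i^0(0)$ wherever it does not. The paper uses the uncapped exponential $d\,e^{-\mu(x\cdot\xi-ct)}$ with $d$ so large that the level set $\{d e^{-\mu x\cdot\xi}=M\}$ already lies in $\{x\cdot\xi\ge L_0\}$, which avoids having to justify that the pointwise minimum with the constant $M$ is a super-solution (a nonsmooth function in the random-dispersal case); otherwise the two arguments coincide.

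The lower bound is where your proposal has a genuine gap. First, it is circular as written: you invoke Theorem \ref{positive-solution-thm}(3) to make the solution uniformly positive on large balls, but in this paper that statement is deduced from Theorem \ref{spreading-speed-thm1}(4), whose proof is exactly the spreading estimate you are trying to establish; parts (1)--(2) of Theorem \ref{positive-solution-thm} are proved independently but apply only to $u_0\in X_i^{++}$, and an element of $X_i^+(\xi)$ need not be uniformly positive. Second, the step you yourself identify as the heart of the matter --- ``a bounded obstacle causes only a bounded delay,'' uniformly over the whole moving half-space $\{x\cdot\xi\le ct\}$ --- is never actually carried out: comparing from below with the homogeneous equation ``in the exterior of the obstacle'' does not produce a sub-solution, since the dynamics on $\{\|x\|\ge L_0\}$ is coupled to the interior through the dispersal term, and the exterior-domain spreading estimate you would need is not among your ingredients. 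The paper's resolution is the idea you are missing: by Lemma \ref{technical-lm2} one minorizes $f_i$ by a spatially periodic nonlinearity $g_i\le f_i$ of very large period, so that the bounded inhomogeneity contributes at most $\epsilon$ to the spatial average and $\hat g_i(0)\ge f_i^0(0)-\epsilon$; Propositions \ref{lower-bound-pe} and \ref{periodic-prop3} (spatial variation can only increase the periodic spreading speed, $c_i^*(\xi;g_i)\ge c_i^*(\xi;\hat g_i)$) then give $c_i^*(\xi;g_i)>c$ for any $c<c_i^0(\xi)$ once $\epsilon$ is small, and comparison with the periodic equation via Proposition \ref{periodic-prop2} yields $\liminf_{x\cdot\xi\le ct,\,t\to\infty}u_i(t,x;u_0)>0$ with no delay estimate and no appeal to Theorem \ref{positive-solution-thm}(3). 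Finally, your squeeze for \eqref{spreading-eq0} supplies only the one-sided bound from above; converting $\liminf>0$ on $\{x\cdot\xi\le ct\}$ into uniform convergence to $u_i^*$ on $\{x\cdot\xi\le c't\}$ for $c'<c$ requires the separate compactness argument of Lemma \ref{technical-lm1}.
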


\begin{theorem}[Spreading features of spreading speeds]
\label{spreading-speed-thm1}
Assume ${\rm (H1)}$ and ${\rm (H2)}$ and $1\leq i\leq 3$.  Then for any given $\xi\in S^{N-1}$, the following hold.
\begin{itemize}
\vspace{-0.05in}\item[(1)] For each $ u_0\in X_i^+$ satisfying that  $u_0(x)=0$ for $x\in\mathcal{H}_i$ with $|x\cdot\xi|\gg 1$,
\vspace{-0.05in}$$
\limsup_{|x\cdot\xi|\geq ct, t\to\infty}u_i(t,x;u_0)=0 \quad \forall c>\max\{c^*_i(\xi),c^*_i(-\xi)\}.
\vspace{-0.05in}$$

\vspace{-0.05in}\item[(2)]  For each $\sigma>0$,  $r>0$, and  $u_0\in X^+_i$ satisfying that $u_0(x)\geq\sigma$ for $x\in\mathcal{H}_i$ with $|x\cdot \xi|\leq r$,
\vspace{-0.05in}$$
\limsup_{|x\cdot \xi|\leq ct,t\to\infty}|u_i(t,x;u_0)-u_i^*(x)|=0\quad \forall 0<c<\min\{c^*_i(\xi),c^*_i(-\xi)\}.
\vspace{-0.05in}$$

\vspace{-0.05in}\item[(3)] For each $u_0\in X_i^+$ satisfying that $u_0(x)=0$ for $x\in\mathcal{H}_i$ with $\|x\|\gg 1$,
\vspace{-0.05in}$$
\limsup_{\|x\|\geq ct,t\to\infty}u_i(t,x;u_0)=0\quad \forall c>\sup_{\xi\in S^{N-1}}c_i^*(\xi).
\vspace{-0.05in}$$

\vspace{-0.05in}\item[(4)] For each $\sigma>0$, $r>0$, and  $u_0\in X_i^+$ satisfying that $u_0(x)\geq \sigma$ for $\|x\|\le r$,
\vspace{-0.05in}$$
\limsup_{\|x\|\leq ct,t\to\infty}|u_i(t,x;u_0)-u_i^*(x)|=0\quad \forall 0<c<\inf_{\xi\in S^{N-1}}c_i^*(\xi).
\vspace{-0.05in}$$
\end{itemize}
\end{theorem}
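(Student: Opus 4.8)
\medskip
\noindent\textbf{Proof strategy.}
The plan is to reduce everything to the comparison principle together with what is already in hand: Theorem \ref{positive-solution-thm} (existence, spatial asymptotics and global stability of $u_i^*$) and Theorem \ref{spreading-speed-thm} (the directional speeds $c_i^*(\xi)=c_i^0(\xi)$, with the explicit formulas \eqref{c-1-0-eq}--\eqref{c-3-0-eq}, from which one also reads off that $\xi\mapsto c_i^*(\xi)$ is continuous and $0<\inf_{S^{N-1}}c_i^*\le\sup_{S^{N-1}}c_i^*<\infty$). Items (1), (3) are the ``no faster than'' bounds and I would prove them by dominating $u_0$ from above by data that are flat half-spaces in a fixed direction; items (2), (4) are the ``no slower than'' bounds and will require the genuinely new fact that a datum bounded below only on a \emph{bounded} slab, resp.\ ball, still invades at the speed of the limit equation.

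For (1): given $u_0\in X_i^+$ with $u_0(x)=0$ for $|x\cdot\xi|\gg1$, choose $M\ge\|u_0\|$ and continuous $v_0^{+}\in X_i^+(\xi)$, $v_0^{-}\in X_i^+(-\xi)$ with $v_0^{\pm}\ge u_0$ and $v_0^{\pm}$ equal to $M$, resp.\ $0$, far on the two sides (possible because $u_0$ is supported in a slab $\{|x\cdot\xi|\le L\}$). Then $0\le u_i(t,\cdot;u_0)\le\min\{u_i(t,\cdot;v_0^{+}),\,u_i(t,\cdot;v_0^{-})\}$ by comparison, and applying Theorem \ref{spreading-speed-thm} to $v_0^{+}$ and to $v_0^{-}$ gives $\limsup_{x\cdot\xi\ge ct,t\to\infty}u_i(t,x;u_0)=0$ for $c>c_i^*(\xi)$ and $\limsup_{x\cdot\xi\le-ct,t\to\infty}u_i(t,x;u_0)=0$ for $c>c_i^*(-\xi)$; together these are (1). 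Part (3) follows by the same scheme after a compactness step: cover $S^{N-1}$ by finitely many caps of half-angle $\theta$ about directions $\eta_1,\dots,\eta_m$ with $\theta$ small enough that $c\cos\theta>\sup_\xi c_i^*(\xi)$; for $\|x\|\ge ct$, the cap containing $x/\|x\|$ gives $x\cdot\eta_k\ge\|x\|\cos\theta\ge(c\cos\theta)t$, and dominating $u_0$ (now compactly supported) from above by a suitable $v_0^{(k)}\in X_i^+(\eta_k)$ and invoking Theorem \ref{spreading-speed-thm} in direction $\eta_k$ finishes it. The finite cover is unavoidable since the localized inhomogeneity is not assumed rotation invariant.

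For (2) and (4) the ``$\limsup\le0$'' half is easy: $M\ge\|u_0\|$ lies in $X_i^{++}$, so $u_i(t,x;u_0)\le u_i(t,x;M)$ and Theorem \ref{positive-solution-thm}(2) gives $\limsup_{t\to\infty}\sup_{x\in\mathcal H_i}\big(u_i(t,x;u_0)-u_i^*(x)\big)\le0$. The substance is the matching lower bound $u_i(t,x;u_0)\ge u_i^*(x)-\epsilon$ uniformly on $\{|x\cdot\xi|\le ct\}$, resp.\ $\{\|x\|\le ct\}$, for $t$ large and $c$ below $\min\{c_i^*(\xi),c_i^*(-\xi)\}$, resp.\ $\inf_\xi c_i^*(\xi)$. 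This cannot be deduced from Theorem \ref{spreading-speed-thm}: a bounded slab is a pointwise minimum of two half-spaces, and the monotone semigroup sends a minimum of data to something no larger than the minimum of the outputs, so the inequality runs the wrong way; nor can it come from comparison with a homogeneous sub-equation, since the localized region may have $f_i(x,0)<0$, so $\inf_x f_i(x,\cdot)$ need not have positive growth at $0$. One must use the positive stationary solution. My plan: (i) by Theorem \ref{positive-solution-thm}(3) and $\inf u_i^*>0$ there are $T_1>0$, $\delta>0$ with $u_i(t,x;u_0)\ge\delta$ on $\{\|x\|\le R_0\}$ (a fixed ball with $R_0>L_0$) for $t\ge T_1$; a short-time lower estimate for the dispersal term, which is translation invariant, upgrades this in case (2) to $u_i(T_1,x;u_0)\ge\delta$ for \emph{all} $x$ with $|x\cdot\xi|\le r$, and in case (4) leaves $u_i(T_1,\cdot;u_0)\ge\delta$ on a ball containing the inhomogeneity. (ii) Restart at $T_1$ and propagate this barrier by a sub-solution $\underline u$ of (1.$i$) that is $\equiv0$ on $\{\|x\|\le L_0\}$ and, on $\{\|x\|>L_0\}$ where (1.$i$) coincides with the limit equation \eqref{limit-main-eq} (resp.\ \eqref{limit-nonlocal-eq}, \eqref{limit-discrete-eq}), is a ``two moving fronts'' (case (2)) or ``expanding region'' (case (4)) sub-solution with speeds just below $\min\{c_i^0(\xi),c_i^0(-\xi)\}$, resp.\ $\inf_\xi c_i^0(\xi)$; extending $\underline u$ by $0$ over the obstacle is legitimate because the true solution is $\ge\delta>0=\underline u$ there and $u\equiv0$ is a sub-solution on the obstacle, so $u_i(T_1+s,\cdot;u_0)\ge\underline u(T_1+s,\cdot)$. (iii) Conclude: given $\epsilon>0$ pick $R\ge L_0$ with $|u_i^*(x)-u_i^0|<\epsilon$ for $\|x\|\ge R$; for the relevant points with $\|x\|\le R$ apply Theorem \ref{positive-solution-thm}(3) directly, and for $\|x\|\ge R$ use step (ii) to get $u_i(t,x;u_0)\ge u_i^0-\epsilon\ge u_i^*(x)-2\epsilon$. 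Item (4) is item (2) with slabs replaced by balls, after the same finite-cover reduction applied to the limit equation's directional speeds.

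The main obstacle is step (ii): producing, simultaneously for the random, nonlocal and discrete dispersal operators, a sub-solution that carries the one- or two-sided KPP front of the \emph{limit} equation past the bounded region $\{\|x\|\le L_0\}$ \emph{without losing asymptotic speed}. For random dispersal this is a localized modification of the classical Aronson--Weinberger sub-solution and is routine; for nonlocal and discrete dispersal the dispersal term at points just outside $\{\|x\|\le L_0\}$ samples inside it, so one must either work with the ``mass-removed'' limit equation on $\{\|x\|>L_0\}$ or absorb the discrepancy using $u_i\ge\delta$ on the obstacle. Equivalently, one needs that the limit equation spreads to $u_i^0$ on $\{|x\cdot\xi|\le ct\}$, resp.\ $\{\|x\|\le ct\}$, for $c$ below the relevant speed \emph{even with a compact obstacle carrying positive data}; the unobstructed statement is Aronson--Weinberger spreading for whole-space KPP (which, for a slab datum, reduces to the one-dimensional case), and it is the obstacle-insensitivity that will need the most care. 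Everything else in the four items is comparison-principle bookkeeping.
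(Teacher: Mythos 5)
Your treatment of items (1) and (3) coincides with the paper's: dominate $u_0$ by elements of $X_i^+(\xi)$ and $X_i^+(-\xi)$ (resp.\ of $X_i^+(\eta_k)$ for a finite set of directions obtained from compactness of $S^{N-1}$), apply Proposition \ref{basic-comparison} and Theorem \ref{spreading-speed-thm}. Those two items are fine.

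For items (2) and (4), however, your proposal has a genuine gap exactly where you flag ``the main obstacle'': step (ii), the construction of a sub-solution that carries the front of the \emph{limit} equation past the compact inhomogeneity $\{\|x\|\le L_0\}$ without losing speed, is never actually carried out. You correctly observe that for the nonlocal and discrete dispersals the operator at points just outside the obstacle samples inside it, so extending a limit-equation sub-solution by $0$ over the obstacle does not obviously yield a sub-solution of $(1.i)$, and your proposed fixes (``mass-removed'' equation, absorbing the discrepancy via $u_i\ge\delta$) are left as intentions rather than arguments. Since this step is the entire analytic content of the lower bound, the proof of (2) and (4) is incomplete as written. A secondary issue: you invoke Theorem \ref{positive-solution-thm}(3) to seed positivity on a ball $\{\|x\|\le R_0\}$, but in the paper that item is itself deduced from Theorem \ref{spreading-speed-thm1}(4), so as stated this is circular; it is repairable (strict positivity from Proposition \ref{basic-comparison}(3) plus compactness suffices), but it should not be cited in this form.

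The paper avoids the obstacle problem entirely by a different device, which you may want to adopt. By Lemma \ref{tech-lm1} / Lemma \ref{technical-lm2}, for any $\epsilon>0$ one can minorize $f_i$ on $\mathcal{H}_i\times[0,M]$ by a \emph{spatially periodic} KPP nonlinearity $g_i$ whose period is so large that the spatial average satisfies $\hat g_i(0)\ge f_i^0(0)-\epsilon$; Propositions \ref{lower-bound-pe} and \ref{periodic-prop3} then give $c_i^*(\xi;g_i(\cdot,\cdot))\ge c_i^*(\xi;\hat g_i(\cdot))>c$ for $\epsilon$ small, and the periodic-media spreading theory (Proposition \ref{periodic-prop2}(2),(4)) applied to the $g_i$-equation, combined with $u_i(t,\cdot;u_0)\ge u_i(t,\cdot;u_0,g_i)$ from Proposition \ref{basic-comparison}, yields $\liminf_{|x\cdot\xi|\le ct,\,t\to\infty}u_i(t,x;u_0)>0$ (resp.\ on $\{\|x\|\le ct\}$). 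Lemma \ref{technical-lm1} then upgrades this positive $\liminf$ to uniform convergence to $u_i^*$ on the spreading set, which also subsumes your separate upper-bound step. In short: the localized dip in the growth rate is periodized with a long period so that its averaged effect on the speed is $O(\epsilon)$, and all the front-propagation work is delegated to the already-established periodic theory rather than to a hand-built obstacle-penetrating sub-solution.
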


To indicate the dependence of $u_i^*(\cdot)$ and $c_i^*(\xi)$ on $f_i$, we may sometime write
$u_i^*(\cdot)$ and $c_i^*(\xi)$ as $u_i^*(\cdot;f_i(\cdot,\cdot))$ and $c_i^*(\xi;f_i(\cdot,\cdot))$, respectively.

\section{Preliminary}

In this section, we present some preliminary materials to be used in later sections, including some basic properties of solutions of \eqref{main-eq}-\eqref{discrete-eq};
principal
eigenvalue theories for spatially periodic dispersal operators with random, nonlocal, and discrete dispersals; and
spatial spreading dynamics of KPP equations in spatially periodic media.

\subsection{Basic properties of KPP equations}

In this subsection, we present some basic properties of  solutions of \eqref{main-eq}-\eqref{discrete-eq}, including comparison principle,
global existence, convergence in open compact topology, and decreasing of the so called part metric along the solutions.
Throughout this subsection, we assume ${\rm (H1)}$.

Let $X_1$, $X_2$, and $X_3$ be as in \eqref{x1-space},
\eqref{x2-space}, and \eqref{x3-space}, respectively.  For given
$u_0\in X_1$ $($resp. $u_0\in X_2$, $u_0\in X_3$), let
$u_1(t,\cdot;u_0)$ $($resp. $u_2(t,\cdot;u_0)$, $u_3(t,\cdot;u_0)$)
be the (local) solution of \eqref{main-eq} $($resp.
\eqref{nonlocal-eq}, \eqref{discrete-eq}$)$  with
$u_1(0,\cdot;u_0)=u_0(\cdot)$ $($resp.
$u_2(0,\cdot;u_0)=u_0(\cdot)$, $u_3(0,\cdot;u_0)=u_0(\cdot)$).

Let $X_i^+$ and $X_i^{++}$ ($i=1,2,3)$
be as in \eqref{x-positive} and \eqref{x-positive-positive}. For given $1\leq i\leq 3$ and  $u,v\in X_i$, we define
\vspace{-0.05in}\begin{equation}
\label{order-eq} u\leq v\,\, (u\geq v)\quad {\rm if}\,\, v-u\in
X_i^+\,\, (u-v\in X_i^+)
\vspace{-0.05in}\end{equation}
and
\vspace{-0.05in}\begin{equation}
\label{order-order-eq}
u\ll v\,\, (u\gg v)\quad {\rm if}\,\, v-u\in X_i^{++}\,\, (u-v\in X_i^{++}).
\vspace{-0.05in}\end{equation}

For given continuous and bounded function $u:[0,T)\times\RR^N\to\RR$, it is called a {\it super-solution} ({\it sub-solution}) of \eqref{main-eq} on $[0,T)$ if
\vspace{-0.05in}$$
u_t(t,x)\geq (\leq) \Delta u(t,x)+u(t,x)f_1(x,u(t,x))\quad\forall  (t,x)\in (0,T)\times \RR^N.
\vspace{-0.05in}$$
Super-solutions (sub-solutions) of \eqref{nonlocal-eq} and \eqref{discrete-eq} are defined similarly.

\begin{proposition}[Comparison principle]
\label{basic-comparison} Assume ${\rm (H1)}$.
\begin{itemize}
\vspace{-0.05in}\item[(1)] Suppose that $u^1(t,x)$ and $u^2(t,x)$ are sub- and super-solutions of \eqref{main-eq} $($resp. \eqref{nonlocal-eq}, \eqref{discrete-eq}$)$
on $[0,T)$ with $u^1(0,\cdot)\leq u^2(0,\cdot)$. Then
$u^1(t,\cdot)\leq u^2(t,\cdot)$ for $t\in (0,T)$. Moreover, if
$u^1(0,\cdot)\not = u^2(0,\cdot)$, then $u^1(t,x)<u^2(t,x)$ for
$x\in\mathcal{H}_1$ $($resp. $x\in\mathcal{H}_2$,
$x\in\mathcal{H}_3)$ and $t\in(0,T)$.

\vspace{-0.05in}\item[(2)] If $u_{01},u_{02}\in X_i$ and $u_{01}\leq u_{02}$ $(1\leq i\leq 3)$, then $u_i(t,\cdot;u_{01})\leq u_i(t,\cdot;u_{02})$ for $t>0$ at which both
$u_i(t,\cdot;u_{01})$ and $u_i(t,\cdot;u_{02})$  exist.

\vspace{-0.05in}\item[(3)]  If $u_{01},u_{02}\in X_i$ and $u_{01}\leq u_{02}$, $u_{01}\not =u_{02}$ $(1\leq i\leq 3)$, then $u_i(t,x;u_{01})< u_i(t,x;u_{02})$ for all $x\in\mathcal{H}_i$
 and $t>0$ at which both
$u_i(t,\cdot;u_{01})$ and $u_i(t,\cdot;u_{02})$  exist.

\vspace{-0.05in}\item[(4)] If $u_{01},u_{02}\in X_i$ and $u_{01}\ll u_{02}$ $(1\leq i\leq 3)$, then  $u_i(t,\cdot;u_{01})\ll u_i(t,\cdot;u_{02})$ for $t>0$ at which both
$u_i(t,\cdot;u_{01})$ and $u_i(t,\cdot;u_{02})$  exist.
\end{itemize}
\end{proposition}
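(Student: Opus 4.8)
The plan is to handle all three cases uniformly by linearizing the reaction term and then invoking (or proving by hand) a maximum principle for the corresponding linear dispersal operator. Fix $1\le i\le3$, let $u^1$ be a sub-solution and $u^2$ a super-solution of $(1.i)$ on $[0,T)$ with $u^1(0,\cdot)\le u^2(0,\cdot)$, and set $w=u^2-u^1$. With $g_i(x,u)=uf_i(x,u)$, which is $C^1$ in $u$ by ${\rm (H1)}$, the mean value theorem gives $g_i(x,u^2(t,x))-g_i(x,u^1(t,x))=a(t,x)w(t,x)$, where $a(t,x)=\p_u g_i\big(x,\theta(t,x)\big)$ with $\theta(t,x)$ between $u^1(t,x)$ and $u^2(t,x)$; since sub- and super-solutions are bounded, $a$ is bounded on $[0,T)\times\mathcal{H}_i$. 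Subtracting the two defining inequalities, $w$ satisfies for $(t,x)\in(0,T)\times\mathcal{H}_i$
\[
w_t \ge \mathcal{A}_i w + a(t,x)w, \qquad w(0,\cdot)\ge 0,
\]
where $\mathcal{A}_1 w=\Delta w$, $\mathcal{A}_2 w=\int_{\RR^N}\kappa(y-\cdot)w(t,y)\,dy-w$, and $\mathcal{A}_3 w=\sum_{k\in K}a_k\big(w(\cdot+k)-w(\cdot)\big)$.

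For $i=1$ I would quote the classical comparison principle for linear parabolic equations on $\RR^N$ with bounded coefficients and bounded sub-/super-solutions to obtain $w\ge 0$, and the strong parabolic maximum principle to obtain $w(t,\cdot)>0$ on $(0,T)$ whenever $w(0,\cdot)\not\equiv 0$. For $i=2,3$ I would argue directly. Write $\mathcal{A}_i w+aw=\mathcal{B}_i w+c(t,x)w$, where $\mathcal{B}_2 w=\int_{\RR^N}\kappa(y-\cdot)w(t,y)\,dy$ (resp.\ $\mathcal{B}_3 w=\sum_{k\in K}a_k w(\cdot+k)$) is a bounded order-preserving operator and $c$ is bounded; choose $\lambda>\sup|c|$ and put $v=e^{\lambda t}w$, so that $v_t\ge\mathcal{M}(t)v$ with $\mathcal{M}(t)=\mathcal{B}_i+(c(t,\cdot)+\lambda)I$ a bounded operator mapping $X_i^+$ into $X_i^+$. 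Setting $h=v_t-\mathcal{M}(t)v\ge 0$, a Picard iteration for the Volterra equation $v(t)=v(0)+\int_0^t\big[\mathcal{M}(s)v(s)+h(s)\big]\,ds$ preserves nonnegativity at every step and converges, so $v\ge 0$, i.e.\ $w\ge 0$; note this argument never appeals to spatial infinity. For the strict statement, since $\mathcal{M}(s)-\mathcal{B}_i\ge 0$ on $X_i^+$ we get $v(t)\ge v(0)+\int_0^t\mathcal{B}_i v(s)\,ds$, and bootstrapping yields $v(t)\ge\frac{t^m}{m!}\mathcal{B}_i^{\,m}v(0)$ for every $m$; when $w(0,\cdot)\ge 0$ is not identically zero, $\mathcal{B}_i^{\,m}v(0)$ is strictly positive at every point once $m$ is large (for $i=2$ since $\kappa>0$ on the whole ball $\{\|z\|<\delta_0\}$, so the $m$-fold convolution $\kappa^{*m}>0$ on $\{\|z\|<m\delta_0\}$; for $i=3$ since $\{k\in\ZZ^N:\|k\|=1\}$ generates $\ZZ^N$), hence $w(t,\cdot)>0$ for all $t\in(0,T)$. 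This proves (1).

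Parts (2) and (3) then follow at once by applying (1) with $u^1=u_i(\cdot,\cdot;u_{01})$ and $u^2=u_i(\cdot,\cdot;u_{02})$ on their common interval of existence, since each solution is simultaneously a sub- and a super-solution of $(1.i)$; (3) is the strict conclusion, available because $u_{01}\neq u_{02}$. For (4), let $w=u_i(\cdot,\cdot;u_{02})-u_i(\cdot,\cdot;u_{01})\ge 0$ (by (2)), put $\eta=\inf_{x\in\mathcal{H}_i}w(0,x)>0$, and choose $\mu>0$ with $a\ge-\mu$ on the relevant time interval (possible since the solutions, hence $\theta$, stay bounded). Using $w\ge 0$ and discarding the nonnegative terms $\int_{\RR^N}\kappa(y-\cdot)w(t,y)\,dy$ (resp.\ $\sum_{k}a_k w(\cdot+k)$) gives $w_t\ge-(\mu+\rho_i)w$ for $i=2,3$, with $\rho_2=1$ and $\rho_3=\sum_{k\in K}a_k$, hence $w(t,x)\ge\eta e^{-(\mu+\rho_i)t}>0$; for $i=1$ the constant-in-$x$ function $t\mapsto\eta e^{-\mu t}$ is a sub-solution of the linear equation $w_t=\Delta w+aw$ lying below $w$ at $t=0$, so linear parabolic comparison gives $w(t,x)\ge\eta e^{-\mu t}>0$. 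In every case $\inf_{x\in\mathcal{H}_i}w(t,x)>0$, i.e.\ $u_i(t,\cdot;u_{01})\ll u_i(t,\cdot;u_{02})$, which is (4).

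The step I expect to demand the most care is the maximum principle for $i=1$ in the sub-/super-solution form of (1), where on the unbounded domain $\RR^N$ one must rule out $\inf w<0$ being attained only as $\|x\|\to\infty$; the standard device is to perturb by $\e e^{\gamma t}(1+|x|^2+2Nt)$ and then let $\e\downarrow0$. For $i=2,3$ the exponential-shift/Picard scheme above avoids spatial infinity entirely, so the remaining points (boundedness of $a$, $C^1$-in-$t$ regularity of sub-/super-solutions, and strict positivity of the iterated kernels) are routine.
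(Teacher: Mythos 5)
Your proof is correct, and in the places where the paper actually writes out an argument (or defers to the literature) you take a genuinely different route. For part (1) in the nonlocal and discrete cases the paper simply cites \cite{ShZh1} and \cite{ChGuWu}; you instead give a self-contained argument via the exponential shift $v=e^{\lambda t}w$, nonnegativity of the Picard iterates for the resulting Volterra equation, and the bootstrap $v(t)\ge \frac{t^m}{m!}\mathcal{B}_i^{\,m}v(0)$ together with positivity of the iterated kernels $\kappa^{*m}$ (resp.\ connectivity of $K$ in $\ZZ^N$) to get the strict inequality. That last device is a nice quantitative substitute for the ``strong positivity'' of the linear flow that the references invoke. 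For part (4) the paper uses the variation-of-constants formula $\tilde w(t,\cdot)=e^{\mathcal{K}t}(u_{02}-u_{01})+\int_0^t e^{\mathcal{K}(t-\tau)}(\cdots)\,d\tau$ and the fact that $e^{\mathcal{K}t}$ preserves $X_2^{++}$; you instead discard the nonnegative coupling term $\mathcal{K}w$ (resp.\ $\sum_k a_k w(\cdot+k)$, resp.\ compare with the spatially constant subsolution $\eta e^{-\mu t}$) to get the explicit uniform lower bound $w(t,x)\ge\eta e^{-(\mu+\rho_i)t}$ — more elementary and arguably cleaner, since it avoids having to know that the linear semigroup maps $X_i^{++}$ into itself. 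The points you flag as routine (Phragm\'en--Lindel\"of-type perturbation for the parabolic case on $\RR^N$, boundedness of the linearized coefficient $a$, and enough $t$-regularity of sub-/super-solutions to justify the Volterra identity) are exactly the implicit conventions the paper itself relies on (e.g.\ $M\ge\sup(1-a(t,x))$ in its proof of (4)), so I see no genuine gap.
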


\begin{proof}
(1) The case $i=1$ follows from comparison principle for parabolic equations.
The case $i=2$ follows from \cite[Propositions 2.1 and 2.2]{ShZh1}.
The case $i=3$  follows from comparison principle for lattice differential equations
(see the arguments in \cite[Lemma 1]{ChGuWu}).

(2) and (3) follow from (1).

(4) We provide a proof for the case $i=2$. Other cases can be proved similarly.
Take any $T>0$ such that both $u_2(t,\cdot;u_{01})$ and $u_2(t,\cdot;u_{02})$ exist on
$[0,T]$. It suffices to prove that $u_2(t,\cdot;u_{02})\gg u_2(t,\cdot;u_{01})$
for $t\in [0,T]$. To this end, let $w(t,x)=u_2(t,x;u_{02})-u_2(t,x;u_{01})$. Then $w(t,x)$ satisfies the following equation,
\vspace{-0.05in}\begin{equation*}
w_t(t,x)=\int_{\RR^N}\kappa(y-x)w(t,y)dy-w(t,x)+a(t,x)w(t,x),
\vspace{-0.05in}\end{equation*}
where
\vspace{-0.05in}\begin{align*}
a(t,x)=&f_2(x,u_2(t,x;u_{02}))\\
&+u_2(t,x;u_{01})\int_0^1 \p_u f_2(x,s u_2(t,x;u_{02})+(1-s)u_2(t,x;u_{01}))ds.
\vspace{-0.05in}\end{align*}
Let $M>0$ be such that
$M\geq \sup_{x\in\RR^N,t\in[0,T]}(1-a(t,x))$
and $\tilde w(t,x)=e^{Mt}w(t,x)$.
Then $\tilde w(t,x)$ satisfies
\vspace{-0.05in}$$
\tilde w_t(t,x)=\int_{\RR^N}\kappa(y-x)\tilde w(t,y)dy+[M-1+a(t,x)]\tilde w(t,x).
\vspace{-0.05in}$$
Let $\mathcal{K}:X_2\to X_2$ be defined by
\vspace{-0.05in}\begin{equation}
\label{k-op}
(\mathcal{K}u)(x)=\int_{\RR^N}\kappa(y-x)u(y)dy\quad {\rm for}\quad u\in X_2.
\vspace{-0.05in}\end{equation}
Then $\mathcal{K}$ generates an analytic semigroup on $X_2$ and
\vspace{-0.05in}$$
\tilde w(t,\cdot)=e^{\mathcal{K}t}(u_{02}-u_{01})+\int_0^t e^{\mathcal{K}(t-\tau)}(M-1+a(\tau,\cdot))\tilde w(\tau,\cdot)d\tau.
\vspace{-0.05in}$$
Observe that $e^{\mathcal{K} t}u_0\ge 0$ for any $u_0\in X_2^+$ and $t\ge 0$ and
$e^{\mathcal{K}t}u_0\gg 0$ for any $u_0\in X_2^{++}$ and $t\ge 0$. Observe also that
$u_{02}-u_{01}\in X_2^{++}$. By (2), $\tilde w(\tau,\cdot)\geq 0$ and hence $(M-1+a(\tau,\cdot))\tilde w(\tau,\cdot)\ge 0$
for $\tau\in[0,T]$.  It then follows that
$\tilde w(t,\cdot)\gg 0$ and then $w(t,\cdot)\gg 0$ (i.e. $u_2(t,\cdot;u_{02})\gg u_2(t,\cdot;u_{01})$) for $t\in [0,T]$.
\end{proof}

\begin{proposition}[Global existence]
\label{basic-global-existence}
Assume ${\rm (H1)}$.
For any given $1\leq i\leq 3$ and $u_0\in X_i^+$, $u_i(t,\cdot;u_0)$ exists for all $t\geq 0$.
\end{proposition}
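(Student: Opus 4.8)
The plan is to upgrade the local solution, whose existence is granted by the semigroup theory quoted before the statement (see \cite{Hen}, \cite{Paz}), to a global one by means of a uniform a priori bound extracted from the comparison principle. Fix $1\le i\le 3$ and $u_0\in X_i^+$. The local solution $u_i(t,\cdot;u_0)$ extends to a maximal interval of existence $[0,T_{\max})$, and the standard continuation criterion (blow-up alternative) applies: if $T_{\max}<\infty$, then necessarily $\limsup_{t\to T_{\max}^-}\|u_i(t,\cdot;u_0)\|_{X_i}=\infty$. Hence it suffices to show that $\|u_i(t,\cdot;u_0)\|_{X_i}$ remains bounded on $[0,T_{\max})$.

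To obtain such a bound I would trap $u_i(t,\cdot;u_0)$ between two constant functions. First, $\underline u\equiv 0$ is a stationary solution of each of \eqref{main-eq}, \eqref{nonlocal-eq}, \eqref{discrete-eq}, hence in particular a sub-solution, and $\underline u\le u_0$ because $u_0\in X_i^+$. Second, set $M:=\max\{\|u_0\|_{X_i},\beta_0\}$ with $\beta_0$ as in ${\rm (H1)}$, and let $\overline u\equiv M$. A constant function is annihilated by $\Delta$, by $u\mapsto\int_{\RR^N}\kappa(y-\cdot)u(y)\,dy-u$, and by $u\mapsto\sum_{k\in K}a_k(u(\cdot+k)-u(\cdot))$, so the super-solution inequality for $\overline u$ reduces in all three cases to $M f_i(x,M)\le 0$ for every $x\in\mathcal H_i$; this holds since $M\ge\beta_0$ and ${\rm (H1)}$ gives $f_i(x,M)<0$. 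Thus $\overline u\equiv M$ is a super-solution of the corresponding equation, and $\overline u\ge u_0$ by the choice of $M$.

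Applying Proposition \ref{basic-comparison}(1) on $[0,T_{\max})$, with the solution $u_i(t,\cdot;u_0)$ regarded alternately as a sub- and as a super-solution, then yields $0\le u_i(t,x;u_0)\le M$ for all $x\in\mathcal H_i$ and all $t\in[0,T_{\max})$, so that $\|u_i(t,\cdot;u_0)\|_{X_i}\le M$ on $[0,T_{\max})$. This contradicts the blow-up alternative unless $T_{\max}=\infty$, which is exactly the assertion; moreover the lower bound $u_i(t,\cdot;u_0)\ge 0$ simultaneously records that $u_i(t,\cdot;u_0)\in X_i^+$ for all $t\ge 0$.

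The argument is essentially bookkeeping, and I expect no real obstacle: the only points that deserve care are that all three dispersal operators are order-preserving and vanish on constants (so that the same constant super-solution works uniformly in $i$) and that the reaction term has the correct sign at the threshold value $M\ge\beta_0$; the substantive inputs, namely local well-posedness and the comparison principle, are already available. A cosmetic alternative would be to compare with the solution of the scalar ODE $v'=v\,\bar f(v)$, $\bar f(u):=\sup_{x\in\mathcal H_i}f_i(x,u)$, but the constant super-solution is cleaner and entirely sufficient.
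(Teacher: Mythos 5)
Your proof is correct and takes essentially the same route as the paper: choose a constant $M$ with $f_i(x,M)<0$ dominating $u_0$, use the comparison principle to trap the solution in $[0,M]$, and conclude global existence from the uniform bound. The only cosmetic difference is that the paper phrases the continuation step as showing $\lim_{t\to T}u_i(t,\cdot;u_0)$ exists in $X_i$ on any maximal interval, whereas you invoke the blow-up alternative directly; these are the same standard criterion.
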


\begin{proof}
Let $1\leq i\leq 3$ and $u_0\in X_i^+$ be given. There is $M\gg 1$ such that
$0\leq u_0(x)\leq M$ and $f_i(x,M)<0$ for all $x\in\mathcal{H}_i$. Then by Proposition \ref{basic-comparison},
\vspace{-0.08in}$$
0\leq u_i(t,\cdot;u_0)\leq M
\vspace{-0.08in}$$
for any $t>0$ at which $u_i(t,\cdot;u_0)$ exists. It is then not difficult to prove that
for any $T>0$ such that $u_i(t,\cdot;u_0)$ exists on $(0,T)$, $\lim_{t\to T}u_i(t,\cdot;u_0)$ exists
in $X_i$.
 This implies that $u_i(t,\cdot;u_0)$ exists and
$u_i(t,\cdot;u_0)\geq 0$ for all $t\geq 0$.
\end{proof}

For given $u,v\in X_i^{++}$, define
\vspace{-0.05in}$$
\rho_i(u,v)=\inf\{\ln \alpha\,|\, \frac{1}{\alpha}u\leq v\leq \alpha u,\,\, \alpha\geq 1\}.
\vspace{-0.05in}$$
Observe that $\rho_i(u,v)$ is well defined and there is $\alpha\geq 1$ such that $\rho_i(u,v)=\ln\alpha$.
Moreover, $\rho_i(u,v)=\rho_i(v,u)$ and $\rho_i(u,v)=0$ iff $u\equiv v$. In literature, $\rho_i(u,v)$ is called the {\it part metric}
between $u$ and $v$.

\begin{proposition}[Decreasing of part metric]
\label{basic-part-metric}
For given $1\leq i\leq 3$ and $u_0,v_0\in X^{++}_i$ with $u_0\not =v_0$,
$\rho_i(u_i(t,\cdot;u_0),u_i(t,\cdot;v_0))$ is non-increasing in $t\in (0,\infty)$.
\end{proposition}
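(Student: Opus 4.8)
The plan is to show that for fixed initial data $u_0, v_0 \in X_i^{++}$, the quantity $\rho_i(u_i(t,\cdot;u_0), u_i(t,\cdot;v_0))$ does not increase by exploiting the sublinearity of the nonlinearity. First I would reduce the problem to a two-time comparison: it suffices to show that for any $t_0 \geq 0$ and any $t > t_0$,
$$
\rho_i\bigl(u_i(t,\cdot;u_0), u_i(t,\cdot;v_0)\bigr) \leq \rho_i\bigl(u_i(t_0,\cdot;u_0), u_i(t_0,\cdot;v_0)\bigr),
$$
and by the semigroup (evolution) property of the solution map it is enough to treat $t_0 = 0$. So fix $\alpha \geq 1$ with $\rho_i(u_0,v_0) = \ln\alpha$, so that $\frac{1}{\alpha} u_0 \leq v_0 \leq \alpha u_0$. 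The goal is to prove $\frac{1}{\alpha} u_i(t,\cdot;u_0) \leq u_i(t,\cdot;v_0) \leq \alpha\, u_i(t,\cdot;u_0)$ for all $t > 0$, since that gives $\rho_i(u_i(t,\cdot;u_0), u_i(t,\cdot;v_0)) \leq \ln\alpha$.

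The key step is to check that $w(t,\cdot) := \alpha\, u_i(t,\cdot;u_0)$ is a super-solution of equation (1.i) whenever $\alpha \geq 1$, and likewise $\frac{1}{\alpha} u_i(t,\cdot;u_0)$ is a sub-solution (here one uses $\alpha^{-1} \le 1$). For definiteness take the nonlocal case $i=2$: writing $u = u_2(t,x;u_0)$, one computes
$$
w_t - \int_{\RR^N}\kappa(y-x)w(t,y)\,dy + w - w f_2(x,w) = \alpha\bigl[u_t - \int_{\RR^N}\kappa(y-x)u(t,y)\,dy + u - u f_2(x, u)\bigr] + \alpha u\bigl[f_2(x,u) - f_2(x,\alpha u)\bigr].
$$
The bracket in the first term vanishes because $u$ solves (1.2), and the second term is $\geq 0$ because $\alpha u \geq u \geq 0$ and $\partial_u f_2(x,\cdot) < 0$ by (H1), so $f_2(x,u) \geq f_2(x,\alpha u)$. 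Hence $w$ is a super-solution. The same computation with $\alpha$ replaced by $\alpha^{-1} \leq 1$ reverses the sign of the analogous extra term, making $\alpha^{-1} u_i(t,\cdot;u_0)$ a sub-solution. The random-dispersal case $i=1$ and the discrete case $i=3$ are identical line for line, replacing the convolution term by $\Delta$ or by $\sum_k a_k(\cdot - \cdot)$. Since $w(0,\cdot) = \alpha u_0 \geq v_0 = u_i(0,\cdot;v_0)$ and $u_i(t,\cdot;v_0)$ is itself a (sub- and super-) solution, Proposition 3.1(1) gives $u_i(t,\cdot;v_0) \leq \alpha\, u_i(t,\cdot;u_0)$ for $t > 0$; symmetrically, $\alpha^{-1} u_i(t,\cdot;u_0) \leq u_i(t,\cdot;v_0)$. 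This yields the claimed inequality, and since $u_0 \neq v_0$ implies $\rho_i$ stays strictly positive (by Proposition 3.1(3) the solutions never coincide), the monotonicity statement on $(0,\infty)$ follows by applying the above with arbitrary starting time $t_0 \in (0,\infty)$.

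I expect the main obstacle to be purely bookkeeping rather than conceptual: verifying that $\alpha u_i(\cdot,\cdot;u_0)$ is genuinely an admissible super-solution in the sense needed for Proposition 3.1(1), i.e.\ that it is continuous and bounded on $[0,T)\times\mathcal{H}_i$ for every $T$ (which follows from global existence, Proposition 3.2, together with the a priori bound $0 \le u_i \le M$ established there), and being careful that the infimum defining $\rho_i$ is attained so that a genuine $\alpha$ with $\rho_i(u_0,v_0) = \ln\alpha$ exists — this is noted in the text just before the proposition. A secondary point requiring a line of care is the passage from the two-time inequality to "non-increasing in $t \in (0,\infty)$": one invokes the evolution property $u_i(t,\cdot;u_0) = u_i(t - t_0, \cdot; u_i(t_0,\cdot;u_0))$ valid for $0 \le t_0 \le t$, applies the fixed-$\alpha$ argument to the pair of data $u_i(t_0,\cdot;u_0), u_i(t_0,\cdot;v_0) \in X_i^{++}$ (membership in $X_i^{++}$ for $t_0>0$ coming from the strong positivity in Proposition 3.1), and concludes $\rho_i$ at time $t$ is bounded by $\rho_i$ at time $t_0$.
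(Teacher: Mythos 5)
Your proposal is correct and follows essentially the same route as the paper: fix $\alpha\ge 1$ attaining the part metric, verify that $\alpha\,u_i(t,\cdot;u_0)$ is a super-solution (and $\alpha^{-1}u_i(t,\cdot;u_0)$ a sub-solution) of (1.i) using $\p_u f_i<0$, and then apply the comparison principle, Proposition \ref{basic-comparison}, to sandwich $u_i(t,\cdot;v_0)$. The only cosmetic difference is that you carry out the computation for the nonlocal case $i=2$ while the paper does $i=1$, and you fold the two comparison steps into one; both are immaterial.
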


\begin{proof}
We give a proof for the case $i=1$. Other cases can be proved similarly.

First, note that there is $\alpha^*>1$ such that  $\rho_1(u_0,v_0)=\ln \alpha^{*}$
and $\frac{1}{\alpha^{*}}u_0\leq v_0\leq \alpha^{*} u_0$. By Proposition \ref{basic-comparison},
\vspace{-0.05in}$$u_1(t, \cdot;v_0)\le u_1(t, \cdot; \alpha^{*}u_0)\quad {\rm for}\quad t>0.
\vspace{-0.05in}$$
Let $v(t, x)=\alpha^{*}u_1(t, x; u_0)$.
Then
\vspace{-0.05in}\begin{align*}
v_{t}(t,x)&=\Delta v(t,x)+v(t,x)f_1(x, u_1(t,x;u_0))\\
& =\Delta
v(t,x)+v(t,x)f_1(x,v(t,x))+v(t,x)f_1(x,u_1(t,x;u_0))-v(t,x)f_1(x,v(t,x))\\
&>\Delta v (t,x)+ v(t,x)f_1(x,v(t,x)).
\vspace{-0.05in}\end{align*}
This together with Proposition \ref{basic-comparison} implies that
\vspace{-0.05in}$$
u_1(t,\cdot;\alpha^* u_0)\leq \alpha^* u_1(t,\cdot;u_0)\quad {\rm for}\quad t>0
\vspace{-0.05in}$$
and then
\vspace{-0.05in}$$
u_1(t,\cdot;v_0)\le \alpha^* u_1(t,\cdot;u_0)\quad {\rm for}\quad t>0.
\vspace{-0.05in}$$
Similarly, it can be proved that
\vspace{-0.05in}$$
\frac{1}{\alpha^*}u_1(t,\cdot;u_0)\le u_1(t,\cdot;v_0)\quad {\rm for}\quad t>0.
\vspace{-0.05in}$$
It then follows that
\vspace{-0.05in}$$
\rho_1(u_1(t,\cdot;u_0),u_1(t,\cdot;v_0))\le \rho_1(u_0,v_0)\quad \forall t>0
\vspace{-0.05in}$$
and hence
\vspace{-0.05in}$$
\rho_1(u_1(t_2,\cdot;u_0),u_1(t_2,\cdot;v_0))\le\rho_1(u_{1}(t_1,\cdot;u_0),u_1(t_1,\cdot;v_0))\quad\forall
0\le t_1<t_2.
\vspace{-0.05in}$$
\end{proof}

To indicate the dependence of solutions of \eqref{main-eq}-\eqref{discrete-eq} on the nonlinearity,
we may write $u_i(t,\cdot;u_0)$ as $u_i(t,\cdot;u_0,f_i(\cdot,\cdot))$. Observe that for any $z_n\in\mathcal{H}_i$,
if $\{z_n\}$ is a bounded sequence, then there are $z^*\in\mathcal{H}_i$ and $\{z_{n_k}\}\subset\{z_n\}$ such that
$z_{n_k}\to z^*$ and $f_i(x+z_{n_k},u)\to f_i(x+z^*,u)$ uniformly in $(x,u)$ on bounded sets.
If $\{z_n\}$ is an unbounded sequence, then there is $z_{n_k}$ such that
$f_i(x+z_{n_k},u)\to f_i^0(u)$ uniformly in $(x,u)$ on bounded sets.

\begin{proposition}[Convergence on compact subsets]
\label{basic-convergence}
Given $1\leq i\leq 3$, suppose that $u_{0n},u_0\in X_i^+$ ($n=1,2,\cdots$), $\{\|u_{0n}\|\}$ is bounded, and $u_{0n}(x)\to u_0(x)$ as $n\to\infty$
 uniformly in $x$ on bounded sets.
\begin{itemize}
\vspace{-0.05in}\item[(1)] If $z_n,z^*\in\mathcal{H}_i$ $(n=1,2,\cdots)$ are such that  $f_i(x+z_n,u)\to f_i(x+z^*,u)$ as $n\to\infty$  uniformly
in $(x,u)$ on bounded sets, then for each $t>0$,
 $u_i(t,x;u_{0n},f_i(\cdot+z_n,\cdot))\to u_i(t,x;u_0,f_i(\cdot+z^*,\cdot))$ as $n\to\infty$ uniformly in $x$ on bounded sets.

\vspace{-0.05in}\item[(2)] If $z_n\in\mathcal{H}_i$ $(n=1,2,\cdots)$ are such that $f_i(x+z_n,u)\to f_i^0(u)$ as $n\to\infty$   uniformly
in $(x,u)$ on bounded sets, then for each $t>0$,
 $u_i(t,x;u_{0n},f_i(\cdot+z_n,\cdot))\to u_i(t,x;u_0,f_i^0(\cdot))$ as $n\to\infty$ uniformly in $x$ on bounded sets.
\end{itemize}
\end{proposition}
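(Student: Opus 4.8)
The plan is to treat the three equations simultaneously via the variation-of-constants formula, exploiting that the linear dispersal semigroups are positive, mass-preserving, and have spatially tight kernels. Fix $1\le i\le 3$; write $v_n(t,\cdot)=u_i(t,\cdot;u_{0n},f_i(\cdot+z_n,\cdot))$, and let $v(t,\cdot)$ denote the target limit --- $u_i(t,\cdot;u_0,f_i(\cdot+z^*,\cdot))$ in case (1), $u_i(t,\cdot;u_0,f_i^0)$ in case (2). Since $\{\|u_{0n}\|\}$ is bounded, (H1)--(H2) let us fix $M\gg 1$ with $\|u_{0n}\|,\|u_0\|\le M$ for all $n$, $f_i(x,M)<0$ for $x\in\mathcal H_i$, and $f_i^0(M)<0$; as every shifted nonlinearity $f_i(\cdot+z_n,\cdot)$ still satisfies (H1) with the same $\beta_0$, Proposition \ref{basic-comparison} and the proof of Proposition \ref{basic-global-existence} give $0\le v_n(t,\cdot)\le M$ and $0\le v(t,\cdot)\le M$ for all $t\ge 0$ and all $n$.

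Let $A_i$ be the linear part of (1.i): $A_1=\Delta$ on $X_1$, $A_2=\mathcal K-I$ on $X_2$, $A_3=\sum_{k\in K}a_k(\tau_k-I)$ on $X_3$ (with $\tau_k u(j)=u(j+k)$). Then
$$
v_n(t,\cdot)=e^{A_it}u_{0n}+\int_0^t e^{A_i(t-s)}\bigl[v_n(s,\cdot)\,f_i(\cdot+z_n,v_n(s,\cdot))\bigr]\,ds,
$$
the integrand being bounded in sup norm by a constant $C=C(M)$, uniformly in $n$ and $s$. Each $e^{A_it}$ is a positive operator with $e^{A_it}1\equiv 1$; its kernel is the Gaussian $G_t$ ($i=1$), $e^{-t}\delta_0+e^{-t}\sum_{m\ge 1}\tfrac{t^m}{m!}\kappa^{*m}$ ($i=2$), or a nonnegative transition kernel $p_t(j,j')$ ($i=3$); and these kernels are uniformly tight for $t$ in a bounded interval --- for $i=1$ because $\int_{\|z\|>\rho}G_t\,dz$ is nondecreasing in $t$, for $i=2,3$ because $\kappa^{*m}$ (resp. an $m$-fold step kernel) is supported within distance $m\delta_0$ (resp. $m$) of the origin and the Poisson tails $\sum_{m\gtrsim\rho}\tfrac{t^m}{m!}\to 0$. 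We use two consequences: (a) if $h_n\to h$ uniformly on bounded sets with $\sup_n\|h_n\|<\infty$, then $e^{A_it}h_n\to e^{A_it}h$ uniformly on bounded sets for each $t\ge 0$; (b) given a bounded $D\subset\mathcal H_i$, $T>0$ and $\e>0$, there is a bounded $D'\supset D$ with $\sup_{x\in D,\,0<\tau\le T}|(e^{A_i\tau}h)(x)|\le\|h\|_{D'}+\e\|h\|_{\mathcal H_i}$ for every bounded $h$, where $\|h\|_S:=\sup_{x\in S}|h(x)|$.

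To conclude, fix $T>0$, a bounded $D$, $\e>0$ and $K\in\NN$; using (b) repeatedly choose nested bounded sets $D=D^{(0)}\subset D^{(1)}\subset\cdots\subset D^{(K)}$, $D^{(k+1)}$ being the set (b) provides for $D^{(k)},T,\e$. Subtracting the Duhamel formulas for $v_n$ and $v$, applying (b), and using $\bigl|v f_i(x+z_n,v)-w f_i(x+z^*,w)\bigr|\le C'|v-w|+M\eta_n(x)$ for $v,w\in[0,M]$ (here $C'$ bounds $|f_i|$ and $M|\p_u f_i|$ on $\mathcal H_i\times[0,M]$ and $\eta_n(x)=\sup_{u\in[0,M]}|f_i(x+z_n,u)-f_i(x+z^*,u)|$, with $f_i^0$ in case (2)), we obtain for $0\le t\le T$
$$
\|v_n(t)-v(t)\|_{D^{(k)}}\le\|u_{0n}-u_0\|_{D^{(k+1)}}+2M\e+\int_0^t\Bigl(C'\|v_n(s)-v(s)\|_{D^{(k+1)}}+M\eta_n^{(k+1)}+2C\e\Bigr)ds,
$$
with $\eta_n^{(k+1)}:=\sup_{x\in D^{(k+1)}}\eta_n(x)\to 0$ as $n\to\infty$ by hypothesis (and $\eta_n^{(k+1)}=0$ for large $n$ when $z_n$ is unbounded). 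Unrolling from $k=0$ to $k=K-1$ and using $\|v_n-v\|_{D^{(K)}}\le 2M$ at the last step yields
$$
\|v_n(T)-v(T)\|_{D}\le c(C',T)\bigl(\|u_{0n}-u_0\|_{D^{(K)}}+M\eta_n^{(K)}+(M+C)\e\bigr)+\frac{(C'T)^K}{K!}\,2M,
$$
with $c(C',T)$ independent of $n,K,\e$. Letting $n\to\infty$, then $K\to\infty$, then $\e\to 0$ gives $\|v_n(T,\cdot)-v(T,\cdot)\|_D\to 0$; since $T>0$ and the bounded set $D$ are arbitrary, this is the assertion in (1) and (2).

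The one genuine obstacle is that the hypotheses give convergence of $u_{0n}$ and of $f_i(\cdot+z_n,\cdot)$ only uniformly on bounded sets --- not globally, and in case (2) the nonlinearities need not converge uniformly in $x$ at all --- so a single Gronwall estimate in the global sup norm is unavailable. The remedy is exactly the use of (a)/(b): tightness of the dispersal kernels makes the solution on a bounded set over a bounded time horizon depend, up to a small error, only on the data over a fixed larger bounded set, and the unavoidable enlargement of the spatial window under iteration is harmless since only finitely many stages are needed while the Picard remainder $(C'T)^K/K!$ swallows the contribution of the outermost window. For $i=1$ one may alternatively get precompactness of $\{v_n\}$ in $C_{\mathrm{loc}}((0,\infty)\times\RR^N)$ from interior parabolic estimates and identify the limit via the equation and the Duhamel formula; the argument above has the advantage of handling $i=1,2,3$ uniformly.
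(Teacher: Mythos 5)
Your proof is correct, but it takes a genuinely different route from the paper's. The paper (which writes out only the case $i=2$ and declares the others similar) sets $v^n=u_2(t,\cdot;u_{0n},f_2(\cdot+z_n,\cdot))-u_2(t,\cdot;u_0,f_2(\cdot+z^*,\cdot))$, observes that $v^n$ solves a linear nonautonomous equation with uniformly bounded coefficient $a_n(t,x)$ and a forcing term $b_n(t,x)$ that is bounded and tends to $0$ uniformly on bounded sets, and then runs a single Duhamel--Gronwall estimate in the exponentially weighted sup norm $\|u\|_\rho=\sup_x |u(x)|e^{-\rho\|x\|}$. The whole point of that norm is that, for sequences bounded in the ordinary sup norm, convergence in $\|\cdot\|_\rho$ is equivalent to uniform convergence on bounded sets, and the dispersal operator remains bounded on the weighted space because the convolution kernel has compact support (Gaussian tails in the case $i=1$). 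So the paper converts "convergence on compacts" into genuine norm convergence and applies one global Gronwall inequality. You instead stay in the plain sup norm and localize by hand: tightness of the dispersal kernels gives your estimate (b) ("finite speed of propagation up to an $\e$ tail"), and a finite Picard-type iteration over $K$ nested bounded windows $D^{(0)}\subset\cdots\subset D^{(K)}$ pushes the uncontrolled far-field contribution into the factorially small remainder $(C'T)^K/K!\cdot 2M$. Your ordering of limits ($n\to\infty$, then $K\to\infty$, then $\e\to 0$) is sound since the left-hand side is independent of $\e$ and $K$, and your use of (H2) to get a uniform bound $C'$ on $|f_i|+M|\partial_u f_i|$ over $\mathcal{H}_i\times[0,M]$ is the same boundedness the paper needs for $a_n$. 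The trade-off: the paper's weighted-norm device is shorter once the space $X_2(\rho)$ is set up, while your argument is more elementary and quantitative, makes the locality mechanism explicit, and treats $i=1,2,3$ uniformly in one pass. In essence the exponential weight is a slicker packaging of exactly the localization you carry out by iteration.
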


\begin{proof}
We prove (1) with $i=2$. All other cases can be proved similarly.

 Let $v^n(t,x)=u_2(t,x;u_{0n},f_2(\cdot+z_n,\cdot))-u_2(t,x;u_0,f_2(\cdot+z^*,\cdot))$.
Then $v^n(t,x)$ satisfies
\vspace{-0.05in}\begin{equation*}
v^n_t(t,x)=\int_{\RR^N}\kappa(y-x)v^n(t,y)dy-v^n(t,x)+a_n(t,x)v^n(t,x)+b_n(t,x),
\vspace{-0.05in}\end{equation*}
where
\vspace{-0.05in}\begin{align*}
a_n(t,x)=&f_2(x+z_n,u_2(t,x;u_{0n},f_2(\cdot+z_n,\cdot)))+u_2(t,x;u_0,f_2(\cdot+z^*,\cdot))\\
&\cdot \int_0^1 \p_u f_2(x+z_n,s u_2(t,x;u_{0n},f_2(\cdot+z_n,\cdot))+
(1-s)u_2(t,x;u_0,f_2(\cdot+z^*,\cdot)))ds
\vspace{-0.05in}\end{align*}
and
\vspace{-0.05in}\begin{align*}
b_n(t,x)= &u_2(t,x;u_0,f_2(\cdot+z^*,\cdot))\\
&\cdot \big(f_2(x+z_n, u_2(t,x;u_0,f_2(\cdot+z^*,\cdot)))-f_2(x+z^*,
 u_2(t,x;u_0,f_2(\cdot+z^*,\cdot)))\big).
\vspace{-0.05in}\end{align*}
Observe that $\{a_n(t,x)\}$ is uniformly bounded and continuous  in $t$ and $x$ and
$b_n(t,x)\to 0$ as $n\to\infty$ uniformly in $t\in[0,\infty)$ and $x$ on bounded sets.

Take a $\rho>0$. Let
\vspace{-0.05in}$$
X_2(\rho)=\{u\in C(\RR^N,\RR)\,|\, u(\cdot) e^{-\rho \|\cdot\|}\in X_2\}
\vspace{-0.05in}$$
with norm $\|u\|_\rho=\|u(\cdot)e^{-\rho \|\cdot\|}\|$.
Note that $\mathcal{K}:X_2(\rho)\to X_2(\rho)$ also generates an analytic  semigroup,
where $\mathcal{K}$ is as in \eqref{k-op},
and there are $M>0$ and $\omega>0$ such that
\vspace{-0.05in}$$
\|e^{(\mathcal{K}-\mathcal{I})t}\|_{X_2(\rho)}\leq M e^{\omega t}\quad \forall t\geq 0,
\vspace{-0.05in}$$
where $\mathcal{I}$ is the identity map on $X_2(\rho)$.
Hence
\vspace{-0.05in}\begin{align*}
v^n(t,\cdot)=&e^{(\mathcal{K}-\mathcal{I})t}v^n(0,\cdot)+\int_0^t e^{(\mathcal{K}-\mathcal{I})(t-\tau)}a_n(\tau,\cdot)v^n(\tau,\cdot)
d\tau\\
&+\int_0^t e^{(\mathcal{K}-\mathcal{I})(t-\tau)}b_n(\tau,\cdot)d\tau
\vspace{-0.05in}\end{align*}
and
then
\vspace{-0.05in}\begin{align*}
\|v^n(t,\cdot)\|_{X_2(\rho)}&\leq M e^{\omega t}\|v^n(0,\cdot)\|_{X_2(\rho)}+
M\sup_{\tau\in[0,t],x\in\RR^N} |a_n(\tau,x)|\int_0^ t e^{\omega(t-\tau)}\|v^n(\tau,\cdot)\|_{X_2(\rho)}d\tau\\
&\quad +M\int_0^ t e^{\omega(t-\tau)}\|b_n(\tau,\cdot)\|_{X_2(\rho)}d\tau\\
&\leq M e^{\omega t}\|v^n(0,\cdot)\|_{X_2(\rho)}+
M\sup_{\tau\in[0,t],x\in\RR^N} |a_n(\tau,x)|\int_0^ t e^{\omega(t-\tau)}\|v^n(\tau,\cdot)\|_{X_2(\rho)}d\tau\\
&\quad + \frac{M}{\omega}\sup_{\tau\in
[0,t]}\|b_n(\tau,\cdot)\|_{X_2(\rho)} e^{\omega t}.
\vspace{-0.05in}\end{align*}
By Gronwall's inequality,
\vspace{-0.05in}$$
\|v^n(t,\cdot)\|_{X_2(\rho)}\leq e^{(\omega+M\sup_{\tau\in[0,t],x\in\RR^N} |a_n(\tau,x)|)t}\Big(M\|v^n(0,\cdot)\|_{X_2(\rho)}
+ \frac{M}{\omega}\sup_{\tau\in [0,t]}\|b_n(\tau,\cdot)\|_{X_2(\rho)}\Big).
\vspace{-0.05in}$$
Note that $\|v^n(0,\cdot)\|_{X_2(\rho)}\to 0$ and $\sup_{\tau\in
[0,t]}\|b_n(\tau,\cdot)\|_{X_2(\rho)}\to 0$ as $n\to\infty$. It then
follows that
\vspace{-0.05in}$$
\|v^n(t,\cdot)\|_{X_2(\rho)}\to 0\quad {\rm as}\quad n\to\infty
\vspace{-0.05in}$$ and then
\vspace{-0.05in}$$
u_2(t,x;u_{0n},f_2(\cdot+z_n,\cdot))\to u_2(t,x;u_0,f_2(\cdot+z^*,\cdot))\quad {\rm as}\quad
n\to\infty
\vspace{-0.05in}$$
uniformly in $x$ on bounded sets.
\end{proof}

\subsection{Principal eigenvalues of spatially periodic dispersal operators}

In this subsection, we present some principal
eigenvalue theories for spatially periodic dispersal operators with random, nonlocal, and discrete dispersals.

Let $p=(p_1,p_2,\dots,p_N)$ with $p_i>0$ for $i=1,2,\cdots,N$ and $X_{i,p}$ be as in \eqref{x1-p-space}-\eqref{x3-p-space}. When $X_{3,p}$ is considered, it is assumed that
$p_i\in\NN$.
 We will denote $\mathcal{I}$ as an identity map on
the  Banach space under consideration.
For given $\xi\in S^{N-1}$, $\mu\in\RR$,   $a_i\in X_{i,p}$ ($i=1,2,3$), consider the following
eigenvalue problems,
\begin{equation}
\label{random-eigenvalue-eq}
\begin{cases}\Delta u(x)-2\mu \xi\cdot\nabla u(x)+(a_1(x)+\mu^2)u(x)=\lambda u(x),\quad x\in\RR^N\cr
u(x+p_i{\bf e_i})=u(x),\quad x\in\RR^N,
\end{cases}
\end{equation}
\begin{equation}
\label{nonlocal-eigenvalue-eq}
\begin{cases}
\int_{\RR^N} e^{-\mu (y-x)\cdot\xi}\kappa(y-x)u(y)dy-u(x)+a_2(x)u(x)=\lambda u(x),\quad x\in\RR^N\cr
u(x+p_i{\bf e_i})=u(x),\quad x\in\RR^N
\end{cases}
\end{equation}
and
\begin{equation}
\label{discrete-eigenvalue-eq}
\begin{cases}
\sum_{k\in K}a_k(e^{-\mu k\cdot\xi} u(j+k)-u(j))+a_3(j)u(j)=\lambda u(j),\quad j\in\ZZ^N\cr
 u(j+p_i{\bf e_i})=u(j),\quad j\in\ZZ^N.
\end{cases}
\end{equation}
Observe that when $\mu=0$, \eqref{random-eigenvalue-eq}, \eqref{nonlocal-eigenvalue-eq}, and
\eqref{discrete-eigenvalue-eq} are independent of $\xi$. Observe also that if $u(t,x)=e^{-\mu (x\cdot\xi-\frac{\lambda}{\mu}t)}\phi(x)$ is a solution of
\vspace{-0.05in}\begin{equation}
\label{random-linear-eq}
u_t(t,x)=\Delta u(t,x)+a_1(x)u(t,x),\quad x\in\RR^N
\vspace{-0.05in}\end{equation}
with $\phi(\cdot)\in X_{1,p}\setminus\{0\}$, or a solution of
\vspace{-0.05in}\begin{equation}
\label{nonlocal-linear-eq}
u_t(t,x)=\int_{\RR^N}k(y-x)u(t,y)dy-u(t,x)+a_2(x)u(t,x),\quad x\in\RR^N
\vspace{-0.05in}\end{equation}
with $\phi(\cdot)\in X_{2,p}\setminus\{0\}$, or a solution
of
\vspace{-0.05in}\begin{equation}
\label{discrete-linear-eq}
u_t(t,j)=\sum_{k\in K}a_k(u(t,x+j)-u(t,j))+a_3(j)u(t,j),\quad j\in\ZZ^N
\vspace{-0.05in}\end{equation}
with $\phi(\cdot)\in X_{3,p}\setminus\{0\}$, then $\lambda$ is an
eigenvalue of \eqref{random-eigenvalue-eq} or
\eqref{nonlocal-eigenvalue-eq} or \eqref{discrete-eigenvalue-eq}
with  $\phi(\cdot)$ being a corresponding eigenfunction. If
$a_1(x)=f_1(x,0)$ $($resp. $a_2(x)=f_2(x,0)$, $a_3(j)=f_3(j,0)$),
then \eqref{random-linear-eq} $($resp. \eqref{nonlocal-linear-eq},
\eqref{discrete-linear-eq}) is the linearized equation of
\eqref{main-eq} $($resp. \eqref{nonlocal-eq}, \eqref{discrete-eq})
at $u=0$.

Define $\mathcal{O}_{i,\mu,\xi}:\mathcal{D}(\mathcal{O}_{i,\mu,\xi})\subset X_{i,p}\to X_{i,p}$ ($i=1,2,3$) by
\vspace{-0.05in}\begin{equation}
\label{o1-operator} (\mathcal{O}_{1,\mu,\xi}u)(x)=\Delta
u(x)-2\mu\xi\cdot\nabla u(x)+(a_1(x)+\mu^2)u(x)\quad \forall\, u\in
\mathcal{D}(\mathcal{O}_{1,\mu,\xi})\subset X_{1,p},
\vspace{-0.05in}\end{equation}
\vspace{-0.05in}\begin{equation}
\label{o2-operator}
(\mathcal{O}_{2,\mu,\xi}u)(x)=\int_{\RR^N}e^{-\mu (y-x)\cdot\xi}\kappa(y-x)u(y)dy-u(x)+a_2(x)u(x)\quad \forall\, u\in \mathcal{D}(\mathcal{O}_{2,\mu,\xi})= X_{2,p}
\vspace{-0.05in}\end{equation}
and
\vspace{-0.05in}\begin{equation}
\label{d-operator}
(\mathcal{O}_{3,\mu,\xi}u)(j)=\sum_{k\in K}a_k( e^{-\mu k\cdot\xi}u(j+k)-u(j))+a_3(j)u(j)\quad \forall\, u\in \mathcal{D}(\mathcal{O}_{3,\mu,\xi})=X_{3,p}.
\vspace{-0.05in}\end{equation}
Let $\sigma(\mathcal{O}_{i,\mu,\xi})$ be the spectrum of $\mathcal{O}_{i,\mu,\xi}$ ($i=1,2,3$).

\begin{definition}
\label{principal-def}
Let $1\leq i\leq 3$, $\mu\in\RR$, and $\xi\in S^{N-1}$ be given.
A real number $\lambda_i(\mu,\xi,a_i)\in\RR$
is called  the
  {\rm principal eigenvalue} of
$\mathcal{O}_{i,\mu,\xi}$
 if
it is an isolated algebraic simple eigenvalue of $\mathcal{O}_{i,\mu,\xi}$ with a positive eigenfunction and for any
$\lambda\in \sigma(\mathcal{O}_{i,\mu,\xi})\setminus\{\lambda_i(\mu,\xi,a_i)\}$,
 ${\rm Re}\lambda<\lambda_i(\mu,\xi,a_i)$.
\end{definition}

For given $1\leq i\leq 3$, $\mu\in\RR$, and $\xi\in S^{N-1}$, let
\vspace{-0.05in}\begin{equation}
\label{principal-spectrum-point}
\lambda^0_i(\mu,\xi,a_i)=\sup\{{\rm Re}\mu\,|\, \mu\in\sigma(\mathcal{O}_{i,\mu,\xi})\}.
\vspace{-0.05in}\end{equation}
Observe that for any $\mu\in\RR$ and $\xi\in S^{N-1}$, $\mathcal{O}_{i,\mu,\xi}$ generates an analytic
semigroup $\{T_i(t)\}_{t\geq 0}$ in $X_{i,p}$
and moreover, $T_i(t)$ is strongly positive (that is, $T_i(t)u_0\geq 0$ for any $t\geq 0$ and $u_0\in X_{i,p}^+$
and $T_i(t)u_0\gg 0$ for any $t>0$ and $u_0\in X_{i,p}^{+}\setminus\{0\}$).
 Then by \cite[Proposition 4.1.1]{Nie}, $r(T_i(t))\in \sigma(T_i(t))$ for any $t>0$, where $r(T_i(t))$ is the spectral
   radius of $T_i(t)$.
   Hence by the spectral mapping theorem (see \cite[Theorem 2.7]{ChLa}), $\lambda_i^0(\mu,\xi,a_i)\in \sigma(\mathcal{O}_{i,\mu,\xi})$ for $i=1,2,3$.
 Observe also that
  $\lambda_i^0(0,\xi,a_i)$ ($i=1,2,3$) are independent of $\xi\in S^{N-1}$. We may then put
$$
\lambda_i^0(a_i)=\lambda_i^0(0,\xi,a_i),\quad i=1,2,3.
$$

It is well known that
the principal eigenvalue $\lambda_1(\mu,\xi,a_1)$ and $\lambda_3(\mu,\xi,a_3)$ of
$\mathcal{O}_{1,\mu,\xi}$ and $\mathcal{O}_{3,\mu,\xi}$ exist for all $\mu\in\RR$ and
$\xi\in S^{N-1}$ and
\vspace{-0.05in}$$
\lambda_i(\mu,\xi,a_i)=\lambda_i^0(\mu,\xi,a_i),\quad i=1,3.
\vspace{-0.05in}$$
The principal eigenvalue of $\mathcal{O}_{2,\mu,\xi}$ may not exist (see an example
in \cite{ShZh1}). If the principal eigenvalue $\lambda_2(\mu,\xi,a_2)$ exists, then
$$
\lambda_2(\mu,\xi,a_2)=\lambda_2^0(\mu,\xi,a_2).
$$
Regarding the existence of principal eigenvalue of $\mathcal{O}_{2,\mu,\xi}$, the following proposition
is proved in \cite{ShZh1}, \cite{ShZh2}.

\begin{proposition}[Existence of principal eigenvalue]
\label{existence-principal-eigenvalue}
\begin{itemize}
\item[(1)] If $a_2\in C^N(\RR^N,\RR)\cap X_{2,p}$ and the partial derivatives of $a_2(x)$ up to order
$N-1$ are zero at some $x_0$ satisfying that $a_2(x_0)=\max_{x\in\RR^N}a_2(x)$, then
the principal eigenvalue $\lambda_2(\mu,\xi,a_2)$ of $\mathcal{O}_{2,\mu,\xi}$ exists for
all $\mu\in\RR$ and $\xi\in S^{N-1}$.

\item[(2)] If $a_2(x)$ satisfies that $\max_{x\in\RR^N}a_2(x)-\min_{x\in\RR^N}a_2(x)<\inf_{\xi\in S^{N-1}}\int_{z \cdot\xi \leq 0}k(z)dz$, then the principal eigenvalue $\lambda_2(\mu,\xi,a_2)$ of $\mathcal{O}_{2,\mu,\xi}$ exists for
all $\mu\in\RR$ and $\xi\in S^{N-1}$.
\end{itemize}
\end{proposition}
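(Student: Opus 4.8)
\noindent{\bf Proof proposal.}
The plan is to reduce the statement to one scalar inequality and then to verify that inequality separately under hypotheses (1) and (2). Write $\mathcal O_{2,\mu,\xi}=\mathcal B_{\mu,\xi}+(a_2(\cdot)-1)\mathcal I$, where $(\mathcal B_{\mu,\xi}u)(x)=\int_{\RR^N}e^{-\mu(y-x)\cdot\xi}\kappa(y-x)u(y)\,dy$. Identifying $X_{2,p}$ with the continuous functions on the torus $\RR^N/(p_1\ZZ\times\cdots\times p_N\ZZ)$, the kernel of $\mathcal B_{\mu,\xi}$ is a locally finite sum of translates of the bounded, compactly supported function $e^{-\mu z\cdot\xi}\kappa(z)$, hence continuous on the torus, so $\mathcal B_{\mu,\xi}$ is compact on $X_{2,p}$. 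The multiplication operator $(a_2(\cdot)-1)\mathcal I$ has (essential) spectrum $[\min_x a_2(x)-1,\ \max_x a_2(x)-1]$; since $\mathcal B_{\mu,\xi}$ is a compact perturbation, $\sigma(\mathcal O_{2,\mu,\xi})$ contains this interval and, outside it, consists only of isolated eigenvalues of finite algebraic multiplicity. Thus the principal spectrum point satisfies $\lambda_2^0(\mu,\xi,a_2)\ge\max_x a_2(x)-1$, and it suffices to establish the strict inequality
\begin{equation}\label{pe-threshold}
\lambda_2^0(\mu,\xi,a_2)>\max_{x\in\RR^N}a_2(x)-1 ;
\end{equation}
for \eqref{pe-threshold} then forces $\lambda_2^0(\mu,\xi,a_2)$ to be an isolated eigenvalue lying off the essential spectrum, which — by the strong positivity of the semigroup generated by $\mathcal O_{2,\mu,\xi}$ together with the Krein--Rutman theorem, exactly as in the paragraph following Definition \ref{principal-def} and in \cite{ShZh1} — makes it the principal eigenvalue.

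To establish \eqref{pe-threshold} I would pass to a characteristic-value reformulation. For $\lambda>\max_x a_2(x)-1$ the operator $(\lambda+1-a_2(\cdot))\mathcal I$ is boundedly invertible with strictly positive multiplier, so $\lambda\mathcal I-\mathcal O_{2,\mu,\xi}$ is invertible if and only if $1\notin\sigma(\mathcal L_\lambda)$, where $\mathcal L_\lambda:=(\lambda+1-a_2(\cdot))^{-1}\mathcal B_{\mu,\xi}$ is a compact positive operator on $X_{2,p}$. Put $g(\lambda):=r(\mathcal L_\lambda)$. The map $\lambda\mapsto\mathcal L_\lambda$ is norm continuous on $(\max_x a_2(x)-1,\infty)$, so $g$ is continuous there; $g$ is non-increasing because $\mathcal L_\lambda$ decreases in $\lambda$; and $g(\lambda)\to0$ as $\lambda\to\infty$. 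If $g(\lambda^*)=1$ for some $\lambda^*>\max_x a_2(x)-1$, then by Krein--Rutman the value $1=r(\mathcal L_{\lambda^*})$ is an eigenvalue of $\mathcal L_{\lambda^*}$, whence $\lambda^*\in\sigma(\mathcal O_{2,\mu,\xi})$ and \eqref{pe-threshold} holds; and by the intermediate value theorem such a $\lambda^*$ exists whenever
\begin{equation}\label{pe-liminf-g}
\lim_{\lambda\downarrow\,\max_x a_2(x)-1}g(\lambda)>1 .
\end{equation}
So everything reduces to checking \eqref{pe-liminf-g} under (1) and under (2), for which I would use the elementary lower bound $r(\mathcal L_\lambda)\ge\inf_x\big((\mathcal L_\lambda\phi)(x)/\phi(x)\big)$, valid for every $\phi\in X_{2,p}^{++}$.

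Under hypothesis (2) I would take $\phi\equiv1$. Then $(\mathcal B_{\mu,\xi}\phi)(x)=\int_{\RR^N}e^{-\mu z\cdot\xi}\kappa(z)\,dz=:I_{\mu,\xi}$ is a positive constant, whence $g(\lambda)\ge I_{\mu,\xi}/(\lambda+1-\min_x a_2(x))$ and $\lim_{\lambda\downarrow\max_x a_2(x)-1}g(\lambda)\ge I_{\mu,\xi}/(\max_x a_2(x)-\min_x a_2(x))$. Since $e^{-\mu z\cdot\xi}\ge1$ on $\{z\cdot\xi\le0\}$ when $\mu\ge0$ (and symmetrically, using $I_{\mu,\xi}=I_{-\mu,-\xi}$, when $\mu<0$), one gets $I_{\mu,\xi}\ge\inf_{\eta\in S^{N-1}}\int_{z\cdot\eta\le0}\kappa(z)\,dz$, which by hypothesis (2) strictly exceeds $\max_x a_2(x)-\min_x a_2(x)$; hence \eqref{pe-liminf-g}.

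Under hypothesis (1) the constant test function is too crude, and I would compare kernels on a small ball around the maximum point $x_0$. Fix $\delta\in(0,\delta_0/2)$ and set $c_0:=e^{-2|\mu|\delta}\min_{\|z\|\le2\delta}\kappa(z)>0$; for $x,y\in B_\delta(x_0)$ one has $\|y-x\|<\delta_0$, so the kernel $\frac{e^{-\mu(y-x)\cdot\xi}\kappa(y-x)}{\lambda+1-a_2(x)}$ of $\mathcal L_\lambda$ is at least $\frac{c_0}{\lambda+1-a_2(x)}$. By Taylor's theorem together with hypothesis (1), $0\le\max_x a_2(x)-a_2(x)\le C\|x-x_0\|^N$ for $x$ near $x_0$, so $\lambda+1-a_2(x)\le\varepsilon+C\|x-x_0\|^N$ with $\varepsilon:=\lambda+1-\max_x a_2(x)$. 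By monotonicity of the spectral radius under domination of nonnegative kernels and under enlarging the domain of integration, $g(\lambda)=r(\mathcal L_\lambda)$ is then at least the spectral radius of the rank-one operator $u\mapsto\big(c_0/(\varepsilon+C\|\cdot-x_0\|^N)\big)\int_{B_\delta(x_0)}u(y)\,dy$ on $C(\overline{B_\delta(x_0)})$, namely
\begin{equation}\label{pe-case1}
g(\lambda)\ \ge\ c_0\int_{B_\delta(x_0)}\frac{dy}{\varepsilon+C\|y-x_0\|^N}\ =\ \frac{c_0\,\omega_{N-1}}{CN}\,\ln\frac{\varepsilon+C\delta^N}{\varepsilon},
\end{equation}
with $\omega_{N-1}=|S^{N-1}|$ (and the evident one-dimensional reading if $N=1$). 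The right-hand side tends to $+\infty$ as $\varepsilon\downarrow0$, i.e. as $\lambda\downarrow\max_x a_2(x)-1$, which gives \eqref{pe-liminf-g}. I expect the essential-spectrum bookkeeping and the monotonicity/continuity of $g$ to be routine; the genuinely delicate point is case (1), where one must recognize that the mechanism forcing \eqref{pe-liminf-g} is the logarithmic divergence of $\int_{B_\delta}dz/(\varepsilon+\|z\|^N)$ in $\RR^N$ — which is exactly why hypothesis (1) asks for flatness of $a_2$ up to order $N-1$ rather than some other order.
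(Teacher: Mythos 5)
The paper does not actually prove this proposition: its ``proof'' consists of citing \cite[Theorem B]{ShZh1} for part (1) and \cite[Theorem B$^{'}$]{ShZh2} for part (2). Your argument is a correct and essentially self-contained reconstruction of what those references establish, and it follows the natural route for such nonlocal eigenvalue problems: split off the compact convolution part $\mathcal B_{\mu,\xi}$ to locate the essential spectrum of $\mathcal O_{2,\mu,\xi}$ at $[\min_x a_2(x)-1,\ \max_x a_2(x)-1]$; reduce existence of the principal eigenvalue to the strict inequality $\lambda_2^0(\mu,\xi,a_2)>\max_x a_2(x)-1$; recast that inequality as $\lim_{\lambda\downarrow \max_x a_2(x)-1} r(\mathcal L_\lambda)>1$ for the compact positive operator $\mathcal L_\lambda=(\lambda+1-a_2(\cdot))^{-1}\mathcal B_{\mu,\xi}$; and verify the limit with the constant test function in case (2) and with the logarithmic divergence of $\int_{B_\delta}dz/(\varepsilon+C\|z\|^N)$ in case (1). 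Both verifications are sound: your bound $\int_{\RR^N}e^{-\mu z\cdot\xi}\kappa(z)\,dz\ge\inf_{\eta\in S^{N-1}}\int_{z\cdot\eta\le0}\kappa(z)\,dz$ correctly handles both signs of $\mu$, the radial computation $\int_0^\delta r^{N-1}(\varepsilon+Cr^N)^{-1}dr=\tfrac{1}{CN}\ln\tfrac{\varepsilon+C\delta^N}{\varepsilon}$ is right, and you have put your finger on exactly why flatness of order $N-1$ is the correct hypothesis in (1). What your write-up buys over the paper is an actual proof; what it still defers is the same thing the paper defers, namely the Krein--Rutman-type conclusion.

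Two steps should be written out if this is to stand on its own. First, the essential-spectrum bookkeeping: justify that multiplication by $\lambda+1-a_2(\cdot)$ fails to be Fredholm whenever this function vanishes somewhere (its range is then not closed), and use stability of the Fredholm property under the compact perturbation $\mathcal B_{\mu,\xi}$ to get both $[\min_x a_2(x)-1,\max_x a_2(x)-1]\subset\sigma(\mathcal O_{2,\mu,\xi})$ and, via the analytic Fredholm alternative applied to $\mathcal I-\mathcal L_\lambda$, the discreteness of the spectrum outside that interval. Second, and more substantively, the passage from ``$\lambda_2^0(\mu,\xi,a_2)$ is an isolated eigenvalue of finite algebraic multiplicity lying strictly above the essential spectrum'' to ``$\lambda_2^0(\mu,\xi,a_2)$ is the principal eigenvalue in the sense of Definition~\ref{principal-def}'' requires the generalized Krein--Rutman theorem for irreducible positive semigroups whose spectral bound is a pole of the resolvent (as in \cite{Nie}); this is what delivers algebraic simplicity, a strictly positive eigenfunction, and strict dominance of the real part over the remaining spectrum. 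Since the strong positivity of $T_2(t)$ is asserted in the paragraph following Definition~\ref{principal-def} and the pole property follows from your Fredholm analysis, the appeal is legitimate, but it is the one genuinely nontrivial step that your proposal only gestures at.
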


\begin{proof}
(1) It follows from \cite[Theorem B]{ShZh1}.

(2) It follows from \cite[Theorem B$^{'}$]{ShZh2}.
\end{proof}

Let $\hat a_i$  be the average of $a_i(\cdot)$ ($i=1,2,3$), that is,
\vspace{-0.05in}\begin{equation}
\label{average-eq}
\begin{cases}
\hat a_i=\frac{1}{|D_i|}\int_{D_i} a_i(x)dx\quad {\rm for}\quad i=1,2\cr
\hat a_3=\frac{1}{\# D_3}\sum_{j\in D_3}a_3(j),
\end{cases}
\vspace{-0.05in}\end{equation}
where
\vspace{-0.05in}\begin{equation}
\label{domain-eq}
D_i=[0,p_1]\times [0,p_2]\times\cdots\times[0,p_N]\cap
\mathcal{H}_i,\,\,  i=1,2,3
\vspace{-0.05in}\end{equation}
and
\vspace{-0.05in}\begin{equation}
\label{size-domain-eq}
 \begin{cases}
 |D_i|=p_1\times
p_2\times\cdots\times p_N\,\, {\rm for}\,\,   i=1,2\cr
 \# D_3=\,\text{the
cardinality of}\,\, D_3.
\end{cases}
\vspace{-0.05in}\end{equation}
 By Proposition
\ref{existence-principal-eigenvalue} (2), $\lambda_2(\mu,\xi,\hat
a_2)$ exists for all $\mu\in\RR$ and $\xi\in S^{N-1}$. The following
proposition shows a relation between $\lambda_i^0(\mu,\xi,a_i)$ and
$\lambda_i^0(\mu,\xi,\hat a_i)$.

\begin{proposition}[Influence of spatial variation]
\label{lower-bound-pe}
For given $1\leq i\leq 3$, $\mu\in\RR$, and $\xi\in S^{N-1}$, there holds
\vspace{-0.05in}$$
\lambda_i^0(\mu,\xi,a_i)\geq \lambda_i^0(\mu,\xi,\hat a_i).
\vspace{-0.05in}$$
\end{proposition}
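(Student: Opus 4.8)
The plan is to compare the semigroups generated by $\mathcal{O}_{i,\mu,\xi}$ with coefficient $a_i$ and with the constant coefficient $\hat a_i$, using the variational (Rayleigh-quotient type) characterization of $\lambda_i^0(\mu,\xi,\hat a_i)$ together with the fact that, for a constant coefficient, the principal eigenfunction is the constant function $1$. The key observation is that for the constant-coefficient operator $\mathcal{O}_{i,\mu,\xi}$ with potential $\hat a_i$, the constant function $\mathbf 1 \in X_{i,p}$ is a positive periodic eigenfunction: in the random case $\Delta \mathbf 1 - 2\mu\xi\cdot\nabla\mathbf 1 + (\hat a_1+\mu^2)\mathbf 1 = (\hat a_1+\mu^2)\mathbf 1$, in the nonlocal case $\int e^{-\mu(y-x)\cdot\xi}\kappa(y-x)\,dy\cdot\mathbf 1 - \mathbf 1 + \hat a_2 \mathbf 1 = \big(\int e^{-\mu z\cdot\xi}\kappa(z)\,dz - 1 + \hat a_2\big)\mathbf 1$, and similarly in the discrete case. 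Hence $\lambda_i^0(\mu,\xi,\hat a_i) = \lambda_i(\mu,\xi,\hat a_i)$ equals the corresponding explicit constant (and this existence is already asserted just before the statement via Proposition~\ref{existence-principal-eigenvalue}(2) for $i=2$, and is classical for $i=1,3$).

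Next I would realize $\lambda_i^0(\mu,\xi,a_i)$ as the exponential growth rate of the positive semigroup $T_i(t)$ generated by $\mathcal{O}_{i,\mu,\xi}$ on $X_{i,p}$, i.e. $\lambda_i^0(\mu,\xi,a_i) = \lim_{t\to\infty}\frac1t\ln\|T_i(t)\|$; this is legitimate since, as noted in the excerpt, $r(T_i(t))\in\sigma(T_i(t))$ and the spectral mapping theorem gives $\lambda_i^0(\mu,\xi,a_i)\in\sigma(\mathcal{O}_{i,\mu,\xi})$ with $e^{t\lambda_i^0} = r(T_i(t))$. Write $\mathcal{O}_{i,\mu,\xi}[a_i] = \mathcal{O}_{i,\mu,\xi}[\hat a_i] + (a_i(\cdot) - \hat a_i)$, where multiplication by $a_i-\hat a_i$ is a bounded (zero-order) perturbation with spatial average zero. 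The strategy is then: apply the variational formula for the principal eigenvalue of the adjoint-type problem, or, more directly, test the Collatz–Wielandt / Donsker–Varadhan characterization
$$
\lambda_i^0(\mu,\xi,a_i) = \sup_{0 < \phi \in \mathcal D} \inf_{x} \frac{(\mathcal{O}_{i,\mu,\xi}[a_i]\phi)(x)}{\phi(x)}
$$
with the specific choice $\phi = \psi_i$, where $\psi_i$ is the (positive, periodic) principal eigenfunction of the $\hat a_i$-problem — but since $\hat a_i$ is constant, $\psi_i \equiv \mathbf 1$ fails to separate the nonconstant part of $a_i$, so instead I would integrate against $\mathbf 1$: namely, let $\psi_i^*$ be the positive principal eigenfunction of the adjoint operator $\mathcal{O}_{i,\mu,\xi}^*[a_i]$, normalize $\int_{D_i}\psi_i^* = 1$, pair the eigenvalue equation $\mathcal{O}_{i,\mu,\xi}[a_i]\varphi_i = \lambda_i^0(\mu,\xi,a_i)\varphi_i$ against $\psi_i^*$ and $\mathbf 1$ against $\mathcal{O}_{i,\mu,\xi}^*[a_i]\psi_i^*$; the transport/diffusion/convolution pieces are conservative (their periodic integrals vanish after the exponential weight is accounted for on the dual side), leaving $\lambda_i^0(\mu,\xi,a_i) \ge$ (a term involving $\int (a_i - \hat a_i)\,(\text{something positive})$) which one massages by convexity (Jensen applied to $\ln$ of the eigenfunction, exploiting that $a_i-\hat a_i$ has zero average) to land at $\lambda_i^0(\mu,\xi,\hat a_i)$. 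The cleanest route may actually be: set $g_i = \lambda_i^0(\mu,\xi,a_i) - a_i(\cdot) + \hat a_i$; then $\mathcal{O}_{i,\mu,\xi}[\hat a_i]\varphi_i = g_i \varphi_i$, integrate this identity against the constant $\mathbf 1$ using self-adjointness-up-to-weight of $\mathcal{O}_{i,\mu,\xi}[\hat a_i]$ to get $\int \hat g_i$-type cancellation, forcing $\frac{1}{|D_i|}\int_{D_i} g_i(x)\varphi_i(x)\,dx / \frac1{|D_i|}\int_{D_i}\varphi_i = \lambda_i^0(\mu,\xi,\hat a_i)$, hence by Jensen $\lambda_i^0(\mu,\xi,a_i) - \hat a_i + \hat a_i = \lambda_i^0(\mu,\xi,a_i) \ge \lambda_i^0(\mu,\xi,\hat a_i)$ after using $\int (a_i - \hat a_i) w = 0$ is not quite it — one needs the weighted version, which is where the convexity of $\mu\mapsto$ eigenvalue or of the exponential enters.

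The main obstacle, I expect, is handling the three dispersal types uniformly and, in particular, the nonlocal case \eqref{nonlocal-eigenvalue-eq} where the principal eigenfunction of the full operator $\mathcal{O}_{2,\mu,\xi}[a_2]$ may fail to exist (as the excerpt explicitly warns). To circumvent this I would phrase everything in terms of $\lambda_i^0$, the supremum of the real part of the spectrum, rather than eigenfunctions: use $\lambda_i^0(\mu,\xi,a_i) = \lim_{t\to\infty}\frac1t\ln\langle T_i(t)\mathbf 1, \mathbf 1\rangle$ (valid by positivity and irreducibility), and then a Feynman–Kac / Trotter-product representation $\langle T_i(t)\mathbf 1,\mathbf 1\rangle = \mathbb E\big[\exp(\int_0^t a_i(Z_s)\,ds)\big]$ for the appropriate (drifted/jump) Markov process $Z_s$ whose generator is the dispersal part; Jensen's inequality in the form $\mathbb E[e^{X}] \ge e^{\mathbb E[X]}$ applied to $X = \int_0^t a_i(Z_s)\,ds$, combined with ergodicity of $Z_s$ on the torus $D_i$ giving $\frac1t\mathbb E[\int_0^t a_i(Z_s)\,ds] \to \hat a_i$ (because the uniform measure on $D_i$ is invariant for the dispersal semigroup — here the exponential weights $e^{-\mu z\cdot\xi}$ must be absorbed by passing to the $h$-transform with $h=\mathbf 1$, i.e. they only tilt the motion, not the invariant average, once the $+\mu^2$ / $\int e^{-\mu z\cdot\xi}\kappa$ normalization is used), yields $\lambda_i^0(\mu,\xi,a_i) \ge \hat a_i + (\text{transport eigenvalue}) = \lambda_i^0(\mu,\xi,\hat a_i)$. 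Checking the invariance of the uniform measure under the periodic dispersal semigroups with the exponential weights — equivalently, that after the natural gauge transformation the drift is divergence-free / the jump rates are balanced on the torus — is the one genuinely case-dependent computation, and it is where I would spend the care; everything else is Jensen plus the ergodic theorem.
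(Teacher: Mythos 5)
The paper does not actually prove this proposition: it dismisses $i=1$ as ``well known'' and cites \cite[Theorem 2.1]{HeShZh} for $i=2,3$, so any self-contained argument you give is by construction a different route. Of the three strategies you sketch, the first two (Collatz--Wielandt with the constant test function, and the adjoint-pairing-plus-Jensen computation) are dead ends, as you yourself half-concede (``is not quite it''); the constant eigenfunction of the averaged problem carries no information about $a_i-\hat a_i$, and the pairing identity alone does not produce the needed convexity. Your final, probabilistic route is however correct and closes all three cases uniformly: write $\mathcal{O}_{i,\mu,\xi}[a_i]$ as (tilted dispersal generator) $+$ (constant $=$ the transport eigenvalue) $+$ $a_i(x)$, represent $T_i(t)\mathbf 1$ by Feynman--Kac, and apply $\mathbb{E}[e^X]\ge e^{\mathbb{E}[X]}$ together with ergodicity. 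The one computation you flag as delicate does check out in all three cases: integrating the tilted dispersal generator over the period cell gives zero by periodicity ($\int_{D_1}(\Delta u-2\mu\xi\cdot\nabla u)\,dx=0$; $\int_{D_2}\int e^{-\mu z\cdot\xi}\kappa(z)(u(x+z)-u(x))\,dz\,dx=0$; likewise for the lattice walk with rates $a_ke^{-\mu k\cdot\xi}$), so the uniform measure on $D_i$ is invariant and, by irreducibility of the kernel/walk, ergodic, whence $\tfrac1t\int_0^t\mathbb{E}_x[a_i(Z_s)]\,ds\to\hat a_i$ and $\lambda_i^0(\mu,\xi,a_i)\ge(\text{transport eigenvalue})+\hat a_i=\lambda_i^0(\mu,\xi,\hat a_i)$ by \eqref{pe-expression-eq}. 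Note also that your use of $\lambda_i^0=\lim_t\tfrac1t\ln\|T_i(t)\|$ is unproblematic here because only the inequality $\|T_i(t)\|\ge\sup_x(T_i(t)\mathbf 1)(x)$ is needed, which sidesteps the possible nonexistence of a principal eigenfunction for $i=2$. What your approach buys over the paper's citations is a single mechanism (Jensen plus invariance of the uniform measure under the tilted motion) covering random, nonlocal, and discrete dispersal at once; what it costs is importing the Feynman--Kac machinery, whereas for $i=1$ the classical one-line proof (divide the eigenvalue equation by the positive eigenfunction, integrate $\Delta\phi/\phi=\Delta\ln\phi+|\nabla\ln\phi|^2$ over the cell) is entirely elementary.
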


\begin{proof}
The case $i=1$ is well known.
The cases $i=2$ and $3$ follow from \cite[Theorem 2.1]{HeShZh}.
\end{proof}

We remark that $\lambda_i(\mu,\xi,\hat a_i)(=\lambda_i^0(\mu,\xi,\hat a_i))$ ($i=1,2,3$) have the following
explicit expressions,
\vspace{-0.05in}\begin{equation}
\label{pe-expression-eq}
\begin{cases}
\lambda_1(\mu,\xi,\hat a_1)=\hat a_1+\mu^2\cr
\lambda_2(\mu,\xi,\hat a_2)=\int_{\RR^N} e^{-\mu z\cdot\xi}\kappa(z)dz-1+\hat a_2\cr
\lambda_3(\mu,\xi,\hat a_3)=\sum_{k\in K}a_k(e^{-\mu k\cdot\xi}-1)+\hat a_3.
\end{cases}
\vspace{-0.05in}\end{equation}

\subsection{KPP equations in spatially periodic media}

In this subsection, we recall some spatial spreading dynamics of KPP equations in
spatially periodic media.

Consider
\vspace{-0.05in}\begin{equation}
\label{main-periodic-eq}
u_t(t,x)=\Delta u(t,x)+ u(t,x)g_1(x,u(t,x)),\quad x\in\RR^N,
\vspace{-0.05in}\end{equation}
\vspace{-0.05in}\begin{equation}
\label{nonlocal-periodic-eq}
u_t(t,x)=\int_{\RR^N} \kappa(y-x)u(t,y)dy-u(t,x)+u(t,x)g_2(x,u(t,x)),\quad x\in\RR^N,
\vspace{-0.05in}\end{equation}
and
\vspace{-0.05in}\begin{equation}
\label{discrete-periodic-eq}
u_t(t,j)=\sum_{k\in K}a_k(u(t,j+k)-u(t,j))+u(t,j)g_3(j,u(t,j)),\quad j\in\ZZ^N,
\vspace{-0.05in}\end{equation}
where $g_i(\cdot,\cdot)$ $(i=1,2,3$) are periodic in the first variable and monostable in the second
variable. More precisely, we assume

\smallskip

\noindent{\bf (P1)} {\it  $1\leq i\leq 3$ and $g_i:\mathcal{H}_i\times\RR\to\RR$ is a
$C^2$ function, $g_i(x+p_l{\bf e}_l,u)=g_i(x,u)$, where $p_l>0$ and $p_l\in \NN$ in the case $i=3$
$(l=1,2,\cdots,N)$,
and
 $g_i(x,u)<0$  for all
$(x,u)\in\mathcal{H}_i\times\RR^+$ with $u\ge \alpha_0$ for some $\alpha_0>0$ and  $\p_ug_i(x,u)<0$ for
all $(x,u)\in\mathcal{H}_i\times\RR^+$.}

\smallskip
\noindent{\bf (P2)} {\it  $\lambda_i^0(g_i(\cdot,0))>0$,  where $i=1,2,3$.
}

\smallskip

Assume (P1). Similarly, by general semigroup theory, for any $u_0\in X_1$ (resp. $u_0\in X_2$, $u_0\in X_3$),
\eqref{main-periodic-eq} (resp. \eqref{nonlocal-periodic-eq}, \eqref{discrete-periodic-eq}) has a
unique (local) solution $u_1(t,\cdot;u_0,g_1(\cdot,\cdot))(\in X_1)$ (resp. $u_2(t,\cdot;u_0,g_2(\cdot,\cdot))(\in X_2)$, $u_3(t,\cdot;u_0,g_3(\cdot,\cdot))(\in X_3)$)
with initial data $u_0(\cdot)$. Moreover, if $u_0\in X_{i,p}$, then
$u_i(t,\cdot;u_0,g_i(\cdot,\cdot))\in X_{i,p}$ for any $t>0$ at which $u_i(t,\cdot;u_0,g_i(\cdot,\cdot))$ exists ($i=1,2,3$).
By Proposition \ref{basic-comparison}, if $u_0\in X_i^+$, then $u_i(t,\cdot;u_0,g_i(\cdot,\cdot))$ exists and
$u_i(t,\cdot;u_0,g_i(\cdot,\cdot))\in X_i^+$ for all $t>0$ ($i=1,2,3$).

\begin{proposition} [Spatially periodic positive stationary solution]
\label{periodic-prop1} Assume (P1) and (P2). Then
\eqref{main-periodic-eq} $($resp. \eqref{nonlocal-periodic-eq},
\eqref{discrete-periodic-eq}$)$ has a unique spatially periodic
stationary solution $u_1^*(\cdot;g_1(\cdot,\cdot))\in X_{1,p}^{++}$
$($resp. $u_2^*(\cdot;g_2(\cdot,\cdot))\in X_{2,p}^{++}$,
$u_3^*(\cdot;g_3(\cdot,\cdot))\in X_{3,p}^{++})$ which is globally
asymptotically stable with respect to perturbations in
$X_{1,p}^+\setminus\{0\}$ $($resp. $X_{2,p}^+\setminus\{0\}$,
$X_{3,p}^+\setminus\{0\})$.
\end{proposition}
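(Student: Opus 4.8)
\noindent\textbf{Proof strategy for Proposition \ref{periodic-prop1}.}
The plan is to run the classical monotone sub-/super-solution scheme for monostable periodic problems, uniformly for the three dispersal operators, relying on the comparison, convergence and principal-eigenvalue facts collected earlier in Section 3. By (P1) I would fix a constant $M_0>\alpha_0$ with $g_i(x,M_0)<0$ for all $x$; then $M_0$ is a time-independent super-solution of \eqref{main-periodic-eq}-\eqref{discrete-periodic-eq}. For the lower barrier I would take $\phi_i\in X_{i,p}^{++}$ with $\mathcal{O}_{i,0,\xi}\phi_i\gg 0$, where $\mathcal{O}_{i,0,\xi}$ (independent of $\xi$, since $\mu=0$) is the linearization of \eqref{main-periodic-eq}-\eqref{discrete-periodic-eq} at $u=0$, i.e.\ with $a_i=g_i(\cdot,0)$: for $i=1,3$ one may take $\phi_i$ to be the principal eigenfunction, which exists because $\lambda_i^0(g_i(\cdot,0))>0$ by (P2); the case $i=2$ is the obstacle discussed below. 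Since $\p_ug_i<0$, for small $\varepsilon>0$ the function $\varepsilon\phi_i$ satisfies $\mathcal{O}_{i,0,\xi}(\varepsilon\phi_i)+\varepsilon\phi_i\bigl(g_i(\cdot,\varepsilon\phi_i)-g_i(\cdot,0)\bigr)\gg 0$, so it is a strict sub-solution, and after shrinking $\varepsilon$ one has $0\ll\varepsilon\phi_i\le M_0$.

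\emph{Existence.} Since $M_0$ is a super-solution, $t\mapsto u_i(t,\cdot;M_0)$ is non-increasing (apply Proposition \ref{basic-comparison} to $u_i(h,\cdot;M_0)\le M_0$ together with the semigroup property), and since $\varepsilon\phi_i$ is a sub-solution, $t\mapsto u_i(t,\cdot;\varepsilon\phi_i)$ is non-decreasing; both orbits are global and remain in the order interval $[0,M_0]\cap X_{i,p}^+$. By Dini's theorem on the period cell $D_i$, together with the regularity behind the results of Section 3 (and, for the nonlocal equation, the fixed-point/regularity arguments of \cite{ShZh1}), the monotone limits $\bar u_i:=\lim_{t\to\infty}u_i(t,\cdot;M_0)$ and $\underline u_i:=\lim_{t\to\infty}u_i(t,\cdot;\varepsilon\phi_i)$ are continuous periodic functions, attained uniformly; passing to the limit in the solution map via Proposition \ref{basic-convergence} then shows that they are periodic stationary solutions with $0\ll\varepsilon\phi_i\le\underline u_i\le\bar u_i$.

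\emph{Uniqueness and attractivity.} Let $v^*,w^*\in X_{i,p}^{++}$ be two periodic positive stationary solutions, and let $\alpha\ge 1$ be the smallest number with $\alpha^{-1}v^*\le w^*\le\alpha v^*$ (it exists by continuity, periodicity and positivity). If $\alpha>1$, then $\p_ug_i<0$ makes the time-independent functions $\alpha v^*$ and $\alpha w^*$ strict super-solutions, while $\alpha v^*\ne w^*$ and $\alpha w^*\ne v^*$; applying the strong form of Proposition \ref{basic-comparison} on $[0,1]$ to the pairs $(w^*,\alpha v^*)$ and $(v^*,\alpha w^*)$ gives $w^*<\alpha v^*$ and $v^*<\alpha w^*$ pointwise, hence (by periodicity) uniformly, so $\alpha$ could be decreased --- a contradiction. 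Therefore $\alpha=1$ and $v^*=w^*$; this is the mechanism underlying Proposition \ref{basic-part-metric}, and in particular $\bar u_i=\underline u_i=:u_i^*$. For global attractivity, given $u_0\in X_{i,p}^+\setminus\{0\}$ one has $u_i(1,\cdot;u_0)\gg 0$ by Proposition \ref{basic-comparison}, so after a further shrinking of $\varepsilon$ we get $\varepsilon\phi_i\le u_i(1,\cdot;u_0)\le M_0$; comparison then sandwiches $u_i(t,\cdot;\varepsilon\phi_i)\le u_i(t+1,\cdot;u_0)\le u_i(t,\cdot;M_0)$ for all $t\ge 0$, and since both ends converge uniformly to $u_i^*$, so does $u_i(t,\cdot;u_0)$, i.e.\ $u_i^*$ is globally asymptotically stable in $X_{i,p}$ with respect to perturbations in $X_{i,p}^+\setminus\{0\}$.

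\emph{Main obstacle.} For $i=1,3$ the scheme is routine. The genuinely delicate step will be the lower barrier for the nonlocal case $i=2$: the operator $\mathcal{O}_{2,0,\xi}$ need not possess a principal eigenvalue (an example is in \cite{ShZh1}), so $\phi_2$ cannot be built from an eigenfunction. The way around is to use that (P2), $\lambda_2^0(g_2(\cdot,0))>0$, forces the \emph{generalized} principal eigenvalue $\sup\{\lambda\in\RR:\exists\,\phi\in X_{2,p}^{++}\text{ with }\mathcal{O}_{2,0,\xi}\phi\ge\lambda\phi\}$ to be positive, which supplies a periodic $\phi_2\in X_{2,p}^{++}$ with $\mathcal{O}_{2,0,\xi}\phi_2\gg 0$; this is exactly where the nonlocal principal-eigenvalue results of \cite{ShZh1,ShZh2,ShZh3} recalled in Section 3 are needed. (Positivity of $u_2^*$ by itself could alternatively be obtained without a sub-solution: if $u_2(t,\cdot;M_0)\to 0$, then for large $t$ the function $u_2(\cdot;M_0)$ would be a super-solution of $v_t=\mathcal{O}_{2,0,\xi}v-\delta v$ with $\delta\in(0,\lambda_2^0(g_2(\cdot,0)))$, and the positive $C(D_2)$-semigroup $\{T(t)\}$ of that linear equation has $\|T(t)\|=\|T(t)\mathbf{1}\|_\infty\to\infty$, its spectral radius being $e^{(\lambda_2^0(g_2(\cdot,0))-\delta)t}$ --- contradicting boundedness of the orbit; but a sub-solution is in any case needed for global attractivity.)
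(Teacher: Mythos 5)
Your proposal is correct in outline, but it takes a genuinely different route from the paper: the paper does not prove Proposition \ref{periodic-prop1} at all --- its ``proof'' is a two-line citation (\cite[Theorem 2.3]{Zha1} for $i=1,3$ and \cite[Theorem C]{ShZh2} for $i=2$) --- whereas you reconstruct the monotone sub-/super-solution iteration, the part-metric uniqueness argument (the same $\alpha$-contraction mechanism the paper itself deploys later in the proof of Theorem \ref{positive-solution-thm}(1)), and the sandwiching argument for attractivity. All of that is sound, modulo two small points: (i) for $i=2$ you should make explicit that the pointwise monotone limit is first shown to satisfy the stationary integral equation by dominated convergence and only then shown continuous via the implicit-function argument of Lemma \ref{tech-lm0} (which needs the positive lower bound $\varepsilon\phi_2$), after which Dini applies; and (ii) ``globally asymptotically stable'' also requires Lyapunov stability, which follows at once from your scheme by squeezing between the monotone orbits of $(1\pm\delta)u_i^*$. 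The one place where your argument is not self-contained is exactly where the paper's citation to \cite{ShZh2} does the real work: producing a periodic $\phi_2\in X_{2,p}^{++}$ with $\mathcal{O}_{2,0,\xi}\phi_2\gg 0$ under (P2) alone, when the principal eigenvalue may fail to exist. Your appeal to positivity of the generalized principal eigenvalue is asserted rather than proved; note that it cannot be derived from the average-based bound of Proposition \ref{lower-bound-pe} (under (P2) the average $\hat a_2$ may be negative), but it can be obtained by a variant of the paper's own Lemma \ref{tech-lm1} construction: since $\lambda_2^0(a_2)\ge -1+\max a_2$ (the essential spectrum of $\mathcal{K}-\mathcal{I}+a_2$ is the range of $-1+a_2$), one replaces $a_2=g_2(\cdot,0)$ by a $C^N$, periodic minorant $\tilde a_2\le a_2$ that is locally constant near a near-maximum point with $\max\tilde a_2>1-\lambda_2^0(a_2)+\max a_2-\lambda_2^0(a_2)$ suitably close to $\max a_2$, so that Proposition \ref{existence-principal-eigenvalue}(1) yields a genuine principal eigenpair with positive eigenvalue, whose eigenfunction is the desired strict sub-solution for the original nonlinearity. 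With that point either cited or filled in as above, your proof is complete and has the advantage of making visible exactly which hypothesis ((P2) versus existence of a principal eigenvalue) each step consumes.
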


\begin{proof}
The cases that $i=1$ and $3$ follow from \cite[Theorem 2.3]{Zha1}.
The case that $i=2$ follows from \cite[Theorem C]{ShZh2}.
\end{proof}

\begin{proposition} [Spreading speeds]
\label{periodic-prop2} Assume (P1) and (P2). Then for any $\xi\in
S^{N-1}$, \eqref{main-periodic-eq} $($resp.
\eqref{nonlocal-periodic-eq}, \eqref{discrete-periodic-eq}$)$ has a
spreading speed $c_{1}^*(\xi;g_1(\cdot,\cdot))$ $($resp.
$c_{2}^*(\xi;g_2(\cdot,\cdot))$, $c_{3}^*(\xi; g_3(\cdot,\cdot)))$
in the direction of $\xi$. Moreover,
\vspace{-0.05in}$$
c_{i}^*(\xi;g_i(\cdot,\cdot))=\inf_{\mu>0}\frac{\lambda_i^0(\mu,\xi,g_i(\cdot,0))}{\mu} \quad (i=1,2,3)
\vspace{-0.05in}$$
and the following hold for $i=1,2,3$.
\begin{itemize}
\vspace{-0.05in}\item[(1)] For each $ u_0\in X_i^+$ satisfying that  $u_0(x)=0$ for $x\in\mathcal{H}_i$ with $|x\cdot\xi|\gg 1$,
\vspace{-0.05in}$$
\limsup_{|x\cdot\xi|\geq ct, t\to\infty}u_i(t,x;u_0,g_i(\cdot,\cdot))=0\quad \forall c>\max\{c^*_i(\xi;g_i(\cdot,\cdot)),c^*_i(-\xi;g_i(\cdot,\cdot))\}.
\vspace{-0.05in}$$

\vspace{-0.05in}\item[(2)]  For each $\sigma>0$,  $r>0$, and   $u_0\in X^+_i$ satisfying that $u_0(x)\geq\sigma$ for  $x\in\mathcal{H}_i$ with $|x\cdot \xi|\leq r$,
\vspace{-0.05in}$$
\limsup_{|x\cdot\xi|\leq ct, t\to\infty}|u_i(t,x;u_0,g_i(\cdot,\cdot))-u_i^*(x;g_i(\cdot,\cdot))|=0
\vspace{-0.05in}$$
for all
$0<c<\min\{c^*_i(\xi;g_i(\cdot,\cdot)),c^*_i(-\xi;g_i(\cdot,\cdot))\}$.

\vspace{-0.05in}\item[(3)] For each  $u_0\in X_i^+$ satisfying that $u_0(x)=0$ for $x\in\mathcal{H}_i$ with $\|x\|\gg 1$,
\vspace{-0.05in}$$
\limsup_{\|x\|\geq ct,t\to\infty}u_i(t,x;u_0,g_i(\cdot,\cdot))=0\quad \forall c>\sup_{\xi\in S^{N-1}}c_i^*(\xi;g_i(\cdot,\cdot)).
\vspace{-0.05in}$$

\vspace{-0.05in}\item[(4)] For each $\sigma>0$, $r>0$, and  $u_0\in X_i^+$ satisfying that $u_0(x)\geq\sigma$ for $x\in\mathcal{H}_i$ with  $\|x\|\leq r$,
\vspace{-0.05in}$$
\limsup_{\|x\|\leq ct,t\to\infty}|u_i(t,x;u_0,g_i(\cdot,\cdot))-u_i^*(x;g_i(\cdot,\cdot))|=0 \quad\forall  0<c<\inf_{\xi\in S^{N-1}}c_i^*(\xi;g_i(\cdot,\cdot)).$$
\end{itemize}
\end{proposition}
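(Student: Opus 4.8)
The plan is to obtain Proposition~\ref{periodic-prop2} from the abstract theory of spreading speeds for monotone semiflows in spatially periodic media (developed by Weinberger and by Liang and Zhao), applied to the solution operators of \eqref{main-periodic-eq}--\eqref{discrete-periodic-eq} restricted to the order interval $[0,u_i^*]$, where $u_i^*=u_i^*(\cdot;g_i(\cdot,\cdot))\in X_{i,p}^{++}$ is the globally stable positive periodic stationary solution furnished by Proposition~\ref{periodic-prop1}. First I would verify the structural hypotheses of that theory for the time-$t$ maps $Q_t^i:u_0\mapsto u_i(t,\cdot;u_0,g_i(\cdot,\cdot))$: these maps are defined on all of $X_i^+$ (Proposition~\ref{basic-global-existence}), carry $[0,u_i^*]$ into itself and are order preserving there, in fact strictly so on nontrivial order relations (Proposition~\ref{basic-comparison}); they commute with the shifts $u(\cdot)\mapsto u(\cdot-p_l{\bf e}_l)$ of the period lattice because $g_i$ is periodic in the first variable; and they are continuous and compact on bounded sets in the open-compact topology (parabolic smoothing when $i=1$, the structure of the nonlocal/discrete semigroup when $i=2,3$, together with Proposition~\ref{basic-convergence}). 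The only fixed points of $Q_t^i$ in $[0,u_i^*]$ are $0$, which is linearly unstable since $\lambda_i^0(g_i(\cdot,0))>0$ by (P2), and $u_i^*$, which attracts every nonzero element of $X_{i,p}^+$ by Proposition~\ref{periodic-prop1}. Under exactly these conditions the abstract theory produces, for each $\xi\in S^{N-1}$, a directional spreading speed $c_i^*(\xi;g_i(\cdot,\cdot))$ in the sense of Definition~\ref{spreading-def}, and it yields the slab/ball statements (1)--(4): (1) and (3) are the ``no faster than $c^*$'' halves, obtained by sandwiching the solution between one-sided supersolutions, while (2) and (4) follow by first showing the solution is eventually bounded below by a small positive periodic function on the expanding slab $\{|x\cdot\xi|\le ct\}$ (resp.\ ball $\{\|x\|\le ct\}$) and then invoking the global stability of $u_i^*$ through the comparison principle.

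Next I would identify the speed via linear determinacy. For the upper bound, fix $\mu>0$ and $c$ with $c\mu>\lambda_i^0(\mu,\xi,g_i(\cdot,0))$; when the principal eigenvalue of $\mathcal{O}_{i,\mu,\xi}$ exists (automatic for $i=1,3$, and true for $i=2$ under the conditions of Proposition~\ref{existence-principal-eigenvalue}, with $\lambda_i(\mu,\xi,\cdot)=\lambda_i^0(\mu,\xi,\cdot)$) let $\phi_\mu\in X_{i,p}^{++}$ be a corresponding principal eigenfunction. Since $\p_u g_i<0$ gives $g_i(x,u)\le g_i(x,0)$ for $u\ge 0$, the function $\overline u(t,x)=e^{-\mu(x\cdot\xi-ct)}\phi_\mu(x)$ is a supersolution of the corresponding equation; dominating a bounded $u_0\ge0$ that vanishes for $x\cdot\xi\gg1$ by a large multiple of $\overline u(0,\cdot)$ and applying Proposition~\ref{basic-comparison} gives $\limsup_{x\cdot\xi\ge ct,\,t\to\infty}u_i=0$, hence $c_i^*(\xi;g_i(\cdot,\cdot))\le\lambda_i^0(\mu,\xi,g_i(\cdot,0))/\mu$; taking the infimum over $\mu$ yields $c_i^*(\xi;g_i(\cdot,\cdot))\le\inf_{\mu>0}\lambda_i^0(\mu,\xi,g_i(\cdot,0))/\mu$. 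For the matching lower bound I would use the standard KPP argument: for $c<\inf_{\mu>0}\lambda_i^0(\mu,\xi,g_i(\cdot,0))/\mu$ one builds a nonnegative, nontrivial, compactly supported subsolution moving with speed $c$ out of a truncated exponential profile times the principal eigenfunction, exploiting that near $u=0$ the nonlinearity differs from its linearization by a term that is $o(u)$; the abstract theory then forces the spreading speed to be at least the linear one. Combining the two bounds gives $c_i^*(\xi;g_i(\cdot,\cdot))=\inf_{\mu>0}\lambda_i^0(\mu,\xi,g_i(\cdot,0))/\mu$.

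The delicate point is the nonlocal case $i=2$, where the principal eigenvalue of $\mathcal{O}_{2,\mu,\xi}$ may fail to exist, so no genuine eigenfunction is available for the supersolution or subsolution. Here I would work directly with the principal spectrum point $\lambda_2^0(\mu,\xi,g_2(\cdot,0))$: for each $\e>0$ one produces a positive periodic $\psi_\e\in X_{2,p}$ with $\mathcal{O}_{2,\mu,\xi}\psi_\e\le(\lambda_2^0(\mu,\xi,g_2(\cdot,0))+\e)\psi_\e$ (obtainable from the identification of $\lambda_2^0$ with the logarithm of the spectral radius of the associated positive semigroup), uses $e^{-\mu(x\cdot\xi-ct)}\psi_\e(x)$ as a supersolution whenever $c\mu>\lambda_2^0(\mu,\xi,g_2(\cdot,0))+\e$, and lets $\e\to0$; a parallel approximation handles the subsolution. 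This is the route carried out by Shen and Zhang for periodic nonlocal KPP equations, while the random and discrete cases are covered by Liang--Zhao and Weinberger. A secondary technical point worth care is the verification of the continuity and compactness hypotheses of the abstract theory in the open-compact topology, uniformly over the relevant initial data — conceptually routine but resting on the preliminary results of Section~3 — together with the reduction of (2) and (4) to the global stability asserted in Proposition~\ref{periodic-prop1}.
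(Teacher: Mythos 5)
Your outline reconstructs, in essentially the form they take in the literature, the arguments behind the results the paper simply cites: the paper's proof of Proposition \ref{periodic-prop2} is a one-line appeal to \cite[Theorems 3.1--3.4 and Corollary 3.1]{LiZh2} (see also \cite{Wei2}) for $i=1,3$ and to \cite[Theorems D and E]{ShZh2} for $i=2$, and your two-step scheme (verify the monotone-semiflow hypotheses, then identify the speed by linear determinacy using principal eigenfunctions or approximate ones) is exactly what those references carry out. One caveat: your first paragraph asserts that the time-$t$ solution maps are compact on bounded sets in the open-compact topology for $i=2$ as well. This is false for the nonlocal equation: the semigroup generated by $\mathcal{K}-\mathcal{I}$ contains a multiple of the identity and has no smoothing effect, so bounded orbits need not be equicontinuous and are not precompact in the open-compact topology. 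That lack of compactness (together with the possible nonexistence of a principal eigenvalue of $\mathcal{O}_{2,\mu,\xi}$) is precisely why the nonlocal case cannot be obtained by quoting the Weinberger/Liang--Zhao abstract theorems verbatim, and why the paper cites \cite{ShZh2} for $i=2$. Your third paragraph --- replacing genuine eigenfunctions by positive periodic $\psi_\e$ realizing the principal spectrum point up to $\e$ and arguing directly with sub- and super-solutions and the comparison principle --- is the correct repair and is the route actually taken in \cite{ShZh1} and \cite{ShZh2}; with that substitution understood, the proposal is sound and coincides with the paper's (cited) proof.
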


\begin{proof}
The cases $i=1$ and $i=3$ follow from \cite[Theorems 3.1-3.4 and Corollary 3.1]{LiZh2} (see also \cite[Theorems 1.2-2.3]{Wei2}) and the case $i=2$ follows from  \cite[Theorems D and E]{ShZh2}.
\end{proof}

Let $\hat g_1(u)$ $($resp. $\hat g_2(u)$, $\hat g_3(u)$) be the
spatial average of $g_1(x,u)$ $($resp. $g_2(x,u)$, $g_3(x,u)$),
respectively, that is,
\begin{equation}
\label{average-g-eq}
\begin{cases}
\hat g_i(u)=\frac{1}{|D_i|}\int_{D_i} g_i(x,u)dx\quad {\rm for}\quad i=1,2\cr
\hat g_3(u)=\frac{1}{\# D_3}\sum_{j\in D_3}g_3(j,u),
\end{cases}
\end{equation}
where $D_i$ ($i=1,2,3$), $|D_i|$ ($i=1,2$) and $\# D_3$ are as in \eqref{domain-eq} and \eqref{size-domain-eq}.

Assume

\smallskip
\noindent{\bf (P3)} $\hat g_i(0)>0$ ($i=1,2,3$).
\smallskip

 Observe that $\lambda_i(\hat g_i(0))=\hat g_i(0)$. Then by Proposition \ref{lower-bound-pe},
 (P3) implies (P2).
Assume (P3). By Proposition \ref{periodic-prop2}, for any $\xi\in S^{N-1}$,
 \eqref{main-periodic-eq} $($resp. \eqref{nonlocal-periodic-eq}, \eqref{discrete-periodic-eq}) with $g_1(x,u)$ $($resp. $g_2(x,u)$, $g_3(j,u)$)
 being replaced by $\hat g_1(u)$ $($resp. $\hat g_2(u)$, $\hat g_3(u)$)
 has a spreading speed
 $ c_{1}^*(\xi;\hat g_1(\cdot))$ $($resp. $ c_{2}^*(\xi;\hat g_2(\cdot))$, $c_{3}^*(\xi;\hat g_3(\cdot))$)   in the direction of $\xi\in S^{N-1}$.

\begin{proposition} [Influence of spatial variation]
\label{periodic-prop3}
Assume (P1) and (P3). Then for any $\xi\in S^{N-1}$,
\vspace{-0.05in}$$c_{i}^*(\xi;g_i(\cdot,\cdot))\geq  c_{i}^*(\xi;\hat g_i(\cdot)),\quad i=1,2,3.
\vspace{-0.05in}$$
\end{proposition}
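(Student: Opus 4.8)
The plan is to reduce the statement entirely to the principal–spectrum–point characterization of the spreading speed provided by Proposition \ref{periodic-prop2}, combined with the monotonicity of $\lambda_i^0$ under spatial averaging from Proposition \ref{lower-bound-pe}. No genuinely new argument is required; the content is bookkeeping that lines up the two cited results.

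First I would linearize at $u=0$. Put $a_i(x):=g_i(x,0)$, which lies in $X_{i,p}$ by (P1). Proposition \ref{periodic-prop2} (applicable since (P1) and (P3) imply (P2)) gives the formula
\[
c_i^*(\xi;g_i(\cdot,\cdot))=\inf_{\mu>0}\frac{\lambda_i^0(\mu,\xi,a_i)}{\mu},\qquad i=1,2,3.
\]
Next, observe that replacing $g_i(x,u)$ by its spatial average $\hat g_i(u)$ in \eqref{main-periodic-eq}--\eqref{discrete-periodic-eq} replaces the linearization coefficient $g_i(x,0)$ by the constant obtained by averaging $g_i(x,0)$ over $D_i$; by the definition \eqref{average-g-eq} of $\hat g_i$ evaluated at $u=0$, that constant equals $\hat g_i(0)$, which is exactly $\hat a_i$ in the notation of Proposition \ref{lower-bound-pe} with $a_i=g_i(\cdot,0)$. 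Since (P3) gives $\hat g_i(0)>0$, the averaged equation also satisfies (P2) (as already noted after \eqref{average-g-eq}), so Proposition \ref{periodic-prop2} applies to it and yields
\[
c_i^*(\xi;\hat g_i(\cdot))=\inf_{\mu>0}\frac{\lambda_i^0(\mu,\xi,\hat a_i)}{\mu}.
\]

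Then I would invoke Proposition \ref{lower-bound-pe} with the periodic coefficient $a_i=g_i(\cdot,0)$: for every $\mu\in\RR$ and $\xi\in S^{N-1}$,
\[
\lambda_i^0(\mu,\xi,a_i)\ \ge\ \lambda_i^0(\mu,\xi,\hat a_i).
\]
Dividing by $\mu>0$ preserves this inequality, and taking the infimum over $\mu>0$ on both sides gives $c_i^*(\xi;g_i(\cdot,\cdot))\ge c_i^*(\xi;\hat g_i(\cdot))$, which is the claim.

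Since the real work is packaged in the cited propositions, there is no substantial obstacle; the points needing a little care are: (i) that averaging $g_i$ and then linearizing at $0$ produces the same constant as linearizing and then averaging, which is immediate from \eqref{average-g-eq}; (ii) that the spreading–speed formula of Proposition \ref{periodic-prop2} is legitimately available for the averaged equation, which is precisely where (P3) enters, via $\lambda_i^0(\hat g_i(0))=\hat g_i(0)>0$; and (iii) for $i=2$, that it is the principal spectrum point $\lambda_2^0$ (which always exists by the spectral–mapping argument recorded after \eqref{principal-spectrum-point}), not the principal eigenvalue $\lambda_2$ (which may fail to exist), that appears in the formula, so no existence question arises.
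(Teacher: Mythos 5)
Your proposal is correct and follows exactly the paper's own argument: set $a_i=g_i(\cdot,0)$, invoke Proposition \ref{periodic-prop2} for both the original and averaged equations to express the speeds as $\inf_{\mu>0}\lambda_i^0(\mu,\xi,\cdot)/\mu$, and conclude via the pointwise inequality of Proposition \ref{lower-bound-pe}. The extra care you take about applicability of the cited results and the $\lambda_2^0$ versus $\lambda_2$ distinction is sound but not a departure from the paper's route.
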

\begin{proof}
Let $a_i(\cdot)=g_i(\cdot,0)$.
By Proposition \ref{periodic-prop2},
\vspace{-0.05in}$$
c_{i}^*(\xi;g_i(\cdot,\cdot))=\inf_{\mu>0}\frac{\lambda_i^0(\mu,\xi,a_i)}{\mu}
\quad {\rm and}\quad c_{i}^*(\xi;\hat g_i(\cdot))=\inf_{\mu>0}\frac{\lambda_i^0(\mu,\xi,\hat a_i)}{\mu}
\vspace{-0.05in}$$
for $i=1,2,3$. By Proposition \ref{lower-bound-pe},
\vspace{-0.05in}$$
\lambda_i^0(\mu,\xi,a_i)\geq \lambda_i^0(\mu,\xi,\hat a_i)\quad i=1,2,3.
\vspace{-0.05in}$$
The proposition then follows.
\end{proof}

\section{Positive Stationary Solutions  and the Proof of Theorem 2.1}

In this section, we investigate the existence of positive stationary solutions of \eqref{main-eq}, \eqref{nonlocal-eq},
and \eqref{discrete-eq}, and prove Theorem 2.1.

Throughout this section, we assume ${\rm (H1)}$ and ${\rm (H2)}$.  We first prove some lemmas.

\begin{lemma}
\label{tech-lm1}
For any $1\le i\le 3$ and $\epsilon>0$, there are $p=(p_1,p_2,\cdots,p_N)\in\NN^N$ and
 $h_i\in X_{i,p}\cap C^N(\mathcal{H}_i,\RR)$   such that
\vspace{-0.05in}$$
f_i(x,0)\geq h_i(x)\quad {\rm for}\quad x\in\mathcal{H}_i,
\vspace{-0.05in}$$
\vspace{-0.08in}$$
\hat h_i\geq f_i^0(0)-\epsilon \quad {\rm (hence}\quad
\lambda_i^0(h_i(\cdot))\geq f_i^0(0)-\epsilon),
\vspace{-0.05in}$$
and for the cases that $i=1$ and $2$,
the partial derivatives of $h_i(x)$ up to order $N-1$ are zero at some $x_0\in\mathcal{H}_i$ with
$h_i(x_0)=\max_{x\in\mathcal{H}_i}(x)$, where
$\hat h_i$ is the average of $h_i(\cdot)$ $($see \eqref{average-eq} for the definition$)$.
\end{lemma}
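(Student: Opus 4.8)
The plan is to construct $h_i$ as a $p$-periodic function that is constant, equal to a value just below $f_i^0(0)$, over almost all of an enormous periodicity cell, and that dips down---only on a ball of fixed radius about the origin---to a level lying beneath $f_i(\cdot,0)$ there; because the cell is huge, the dip contributes a negligible amount to the average, so $\hat h_i$ stays within $\epsilon$ of $f_i^0(0)$. Fix $\epsilon>0$, write $\beta:=f_i^0(0)>0$, and set $\delta:=\epsilon/2$. By (H1), $f_i(\cdot,0)$ is continuous, and by (H2) $f_i(x,0)=\beta$ whenever $\|x\|\ge L_0$; hence $\ell:=\min\{\inf_{\|x\|\le L_0}f_i(x,0),\,\beta-\delta\}$ is a finite constant with $\ell\le f_i(x,0)$ for $\|x\|\le L_0$ and $\ell\le\beta-\delta<\beta=f_i(x,0)$ for $\|x\|\ge L_0$.

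For the cases $i=1,2$ I would fix a radial cutoff $\phi\in C^\infty(\RR^N,[0,1])$ with $\phi\equiv 0$ on $\{\|x\|\le L_0+1\}$ and $\phi\equiv 1$ on $\{\|x\|\ge L_0+2\}$, take an integer $P$ with $P/2>L_0+2$, put $p=(P,\dots,P)$, and define $h_i$ on the cell $[-P/2,P/2]^N$ by $h_i(x)=(\beta-\delta)\phi(x)+\ell\,(1-\phi(x))$, extended $P$-periodically to $\RR^N$. Then four things need checking, all routine. First, $h_i\equiv\beta-\delta$ on a neighbourhood of $\partial([-P/2,P/2]^N)$ inside the cell (since $\|x\|\ge P/2>L_0+2$ there), so the periodic extension is genuinely $C^\infty$, whence $h_i\in C^N(\RR^N,\RR)\cap X_{i,p}$. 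Second, $h_i\le\beta-\delta$ everywhere, being at each point a convex combination of $\ell$ and $\beta-\delta$, and $h_i\equiv\ell$ on $\{\|x\|\le L_0\}$; hence $h_i(x)\le f_i(x,0)$ for all $x$---for $\|x\|\le L_0$ because $h_i(x)=\ell\le f_i(x,0)$, and for $\|x\|\ge L_0$ because $h_i(x)\le\beta-\delta<\beta=f_i(x,0)$. Third, $\max h_i=\beta-\delta$ is attained e.g.\ at $x_0=(P/2,0,\dots,0)$, in a neighbourhood of which $h_i$ is constant, so all partial derivatives of $h_i$ up to order $N-1$ vanish at $x_0$, which is exactly the hypothesis needed later for Proposition~\ref{existence-principal-eigenvalue}(1). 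Fourth, by periodicity $\int_{D_i}h_i=\int_{[-P/2,P/2]^N}h_i=(\beta-\delta)P^N+\int_{\{\|x\|<L_0+2\}}(\ell-\beta+\delta)(1-\phi(x))\,dx\ge(\beta-\delta)P^N-C$, where $C:=(\beta-\delta-\ell)\,|B_{L_0+2}|$ is independent of $P$, so $\hat h_i\ge\beta-\delta-C\,P^{-N}\ge\beta-2\delta=f_i^0(0)-\epsilon$ once $P$ is taken large enough that $C\,P^{-N}<\delta$. The parenthetical inequality $\lambda_i^0(h_i)\ge f_i^0(0)-\epsilon$ then follows from Proposition~\ref{lower-bound-pe} at $\mu=0$ together with the identity $\lambda_i(0,\xi,\hat h_i)=\hat h_i$ from \eqref{pe-expression-eq}.

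For $i=3$ the same recipe works and is simpler, since no smoothness or flatness is demanded: take $P\in\NN$ with $P>2L_0+2$, $p=(P,\dots,P)$, and let $h_3$ be the $P$-periodic extension of the function on $\ZZ^N\cap[0,P)^N$ that equals $\ell$ at the finitely many $j$ with $\|j\|\le L_0$ and $\beta-\delta$ elsewhere; the verifications of $h_3\le f_3(\cdot,0)$ and $\hat h_3\ge f_3^0(0)-\epsilon$ are the same, with $|B_{L_0+2}|$ replaced by $\#\{j\in\ZZ^N:\|j\|\le L_0\}$ and $|D_i|$ by $\#D_3$. I expect the only mildly delicate point to be purely organizational: fixing $\phi$ and $\delta=\epsilon/2$ first, and only then choosing $P$ large enough both to make the periodic $C^N$-extension legitimate (i.e.\ so that $P/2$ lies beyond the transition layer) and to absorb the $O(P^{-N})$ error in the average---there is no genuine analytic difficulty here.
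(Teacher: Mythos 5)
Your construction is correct and is essentially the paper's own argument: a $p$-periodic function that dips below $\inf f_i(\cdot,0)$ on a fixed ball about the origin, is constant (near $f_i^0(0)$, hence locally flat at its maximum) outside that ball within a cell of side $P$, and whose average tends to the plateau value as $P\to\infty$, with the parenthetical claim following from Proposition~\ref{lower-bound-pe} and $\lambda_i^0(\hat h_i)=\hat h_i$. The only cosmetic difference is that the paper places the plateau exactly at $f_i^0(0)$ (via the bump $h_0(\|x\|^2/L_0^2)$) while you lower it by $\epsilon/2$, which changes nothing of substance.
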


\begin{proof} Fix $1\le i\le 3$.
By ${\rm (H2)}$, there is $L_0>0$  such that $f_i(x,0)=f_i^0(0)$ for
$x\in\mathcal{H}_i$ with  $\|x\|\geq L_0$. Let
$M_0=\inf_{x\in\mathcal{H}_i,1\le i\le 3}f_i(x,0)$. Let $h_0:\RR\to [0,1]$ be a
smooth function such that $h_0(s)=1$ for $|s|\leq 1$ and $h_0(s)=0$
for $|s|\geq 2$. For any $p=(p_1,p_2,\cdots,p_N)\in\NN^N$ with
$p_j>4L_0$, let $h_i\in X_{i,p}\cap C^N(\mathcal{H}_i,\RR)$ $(i=1,2,3)$  be such that
\vspace{-0.05in}$$h_i(x)=f_i^0(0)-h_0\big(\frac{\|x\|^2}{L_0^2}\big)(f_i^0(0)-M_0)\quad {\rm for}\quad x\in \Big([-\frac{p_1}{2},\frac{p_1}{2}]\times[-\frac{p_2}{2},\frac{p_2}{2}]\times\cdots\times [-\frac{p_N}{2},
\frac{p_N}{2}]\Big)\cap \mathcal{H}_i.
\vspace{-0.05in}$$
Then
\vspace{-0.05in}$$
f_i(x,0)\geq h_i(x)\quad \forall x\in\mathcal{H}_i,\,\, 1\le i\le 3.
\vspace{-0.05in}$$
It is clear that for $i=1$ or $2$, the partial derivatives of $h_i(x)$ up to order $N-1$ are zero
at some $x_0\in\mathcal{H}_i$ with
$h_i(x_0)=\max_{x\in\mathcal{H}_i}h_i(x)(=f_i^0(0))$.
For given $\epsilon>0$,
choosing  $p_j\gg 1$, we have
\vspace{-0.05in}$$\hat h_i>f_i^0(0)-\epsilon.
\vspace{-0.05in}$$
By Proposition \ref{lower-bound-pe},
$\lambda_i^0(h_i(\cdot))\geq \lambda_i^0(\hat h_i)=\hat h_i$
and hence
\vspace{-0.05in}$$
\lambda_i^0(h_i(\cdot))\geq f_i^0(0)-\epsilon.
\vspace{-0.05in}$$
The lemma is thus proved.
\end{proof}

\begin{lemma}
\label{tech-lm0}
Suppose that $\tilde u_2^*:\RR^N\to [\sigma_0,M_0]$ is
Lebesgue measurable, where  $\sigma_0$ and $M_0$ are two positive constants. If
\vspace{-0.05in}$$
\int_{\RR^N}\kappa(y-x)\tilde u_2^*(y)dy-\tilde u_2^*(x)+\tilde u_2^*(x)\tilde f_2(x,\tilde u_2^*(x))=0\quad \forall x\in\RR^N,
\vspace{-0.05in}$$
where $ \tilde f_2(x,u)=f_2(x,u)$ or $f_2^0(u)$ for all $x\in\RR^N$ and $u\in\RR$, then $\tilde u_2^*(\cdot)\in X_2^{++}$.
\end{lemma}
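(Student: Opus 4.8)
The upper bound $\tilde u_2^*(x)\le M_0$ and the lower bound $\inf_{x\in\RR^N}\tilde u_2^*(x)\ge\sigma_0>0$ are built into the hypothesis, so membership in $X_2^{++}$ reduces to showing that $\tilde u_2^*$ is uniformly continuous. Put $w:=\mathcal{K}\tilde u_2^*$, i.e. $w(x)=\int_{\RR^N}\kappa(y-x)\tilde u_2^*(y)\,dy$. Since $\kappa$ is smooth and compactly supported and $\tilde u_2^*$ is bounded and measurable, $w$ inherits the regularity of $\kappa$; in particular $\nabla w(x)=-\int_{\RR^N}(\nabla\kappa)(y-x)\tilde u_2^*(y)\,dy$ satisfies $|\nabla w|\le M_0\|\nabla\kappa\|_{L^1}$, so $w$ is globally Lipschitz (hence bounded and uniformly continuous) with $\sigma_0\le w(x)\le M_0$. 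The stationary equation reads
$$\tilde u_2^*(x)\bigl(1-\tilde f_2(x,\tilde u_2^*(x))\bigr)=w(x),\qquad x\in\RR^N,$$
so the factor $1-\tilde f_2(x,\tilde u_2^*(x))=w(x)/\tilde u_2^*(x)$ lies in $[\sigma_0/M_0,\,M_0/\sigma_0]$, uniformly bounded and bounded away from $0$.

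The plan is to recover the regularity of $\tilde u_2^*$ by inverting, for each $x$, the scalar function $\phi(x,v):=v\bigl(1-\tilde f_2(x,v)\bigr)$, for which $\p_v\phi(x,v)=1-\tilde f_2(x,v)+v\,|\p_u\tilde f_2(x,v)|$ by ${\rm (H1)}$. Two properties of $\phi$ will do the job. \emph{First}, because $1-\tilde f_2(x,\tilde u_2^*(x))=w(x)/\tilde u_2^*(x)>0$ and, using ${\rm (H1)}$ together with $\tilde f_2=f_2^0$ for $\|x\|\ge L_0$, $\delta:=\inf\{\,|\p_u\tilde f_2(x,v)|:x\in\RR^N,\ v\in[\sigma_0,M_0]\,\}>0$, we obtain the \emph{uniform} lower bound
$$\p_v\phi\bigl(x,\tilde u_2^*(x)\bigr)\ \ge\ \tilde u_2^*(x)\,\delta\ \ge\ \sigma_0\delta=:c_0>0\qquad(x\in\RR^N).$$
\emph{Second}, at any $v_*\in[\sigma_0,M_0]$ with $\p_v\phi(x,v_*)=0$ one has $1-\tilde f_2(x,v_*)=-v_*|\p_u\tilde f_2(x,v_*)|<0$, hence $\tilde f_2(x,v_*)>1$ and therefore $\phi(x,v_*)=v_*(1-\tilde f_2(x,v_*))<0$. (When $\tilde f_2=f_2^0$ the same bounds hold, more simply, since no $x$-dependence is present.)

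Continuity of $\tilde u_2^*$ then follows by a Rolle-type argument. Suppose $x_n\to x_0$ and, along a subsequence, $\tilde u_2^*(x_n)\to\ell\in[\sigma_0,M_0]$. Passing to the limit in $\phi(x_n,\tilde u_2^*(x_n))=w(x_n)$ (continuity of $\phi$ and $w$) and in $\p_v\phi(x_n,\tilde u_2^*(x_n))\ge c_0$ gives $\phi(x_0,\ell)=w(x_0)=\phi(x_0,\tilde u_2^*(x_0))$ together with $\p_v\phi(x_0,\ell)\ge c_0$ and $\p_v\phi(x_0,\tilde u_2^*(x_0))\ge c_0$. If $\ell\neq\tilde u_2^*(x_0)$, then $\phi(x_0,\cdot)$ equals $w(x_0)>0$ at the two distinct points $\ell,\tilde u_2^*(x_0)$ while being strictly increasing at each, so on the interval between them it attains a value $>w(x_0)>0$ at an interior point where $\p_v\phi(x_0,\cdot)=0$, contradicting the second property. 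Hence every subsequential limit equals $\tilde u_2^*(x_0)$, so $\tilde u_2^*\in C(\RR^N,\RR)$.

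For uniform continuity, write $a=\tilde u_2^*(x)$, $b=\tilde u_2^*(x+h)$; subtracting the stationary relations at $x$ and $x+h$,
$$\phi(x,a)-\phi(x,b)=\bigl(w(x)-w(x+h)\bigr)+\bigl(\phi(x+h,b)-\phi(x,b)\bigr),$$
and the right side is $\le C|h|$ uniformly, since $w$ is Lipschitz and $x\mapsto\tilde f_2(x,\cdot)$ is uniformly Lipschitz on $[0,M_0]$ (its $x$-gradient vanishes for $\|x\|>L_0$). By the first property and the uniform continuity of $\p_v\phi$ on $\RR^N\times[\sigma_0,M_0]$ there is $r_0>0$, independent of $x$, with $\p_v\phi(x,v)\ge c_0/2$ whenever $|v-a|\le r_0$; a first-exit argument along the segment $[x,x+h]$ (now that $\tilde u_2^*$ is continuous) shows $|b-a|\le r_0$ once $|h|\le c_0 r_0/(2C)$, whence $|\phi(x,a)-\phi(x,b)|\ge(c_0/2)|a-b|$, so $|\tilde u_2^*(x+h)-\tilde u_2^*(x)|\le(2C/c_0)|h|$ for all $x$ and small $h$; thus $\tilde u_2^*$ is locally Lipschitz, hence uniformly continuous, and $\tilde u_2^*\in X_2^{++}$. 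The main obstacle is exactly the passage from the pointwise bound $\p_v\phi(x,\tilde u_2^*(x))\ge c_0$ — all that the a priori merely measurable $\tilde u_2^*$ directly supplies — to genuine local control of $\tilde u_2^*$; this is why the two structural properties of $\phi$ are needed, and they in turn rest on the sign condition $\p_u\tilde f_2<0$ and on the bound $\tilde u_2^*\ge\sigma_0>0$, which via $w\ge\sigma_0$ keeps $\phi(x,\tilde u_2^*(x))=w(x)$ out of the region where $\phi(x,\cdot)$ could fail to be monotone.
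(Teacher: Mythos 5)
Your proof is correct and rests on the same key idea as the paper's: both rewrite the stationary relation as the scalar equation $h^*(x)=\tilde u_2^*(x)\bigl(1-\tilde f_2(x,\tilde u_2^*(x))\bigr)$ with $h^*=\mathcal{K}\tilde u_2^*$ smooth, and both exploit that $v\mapsto v(1-\tilde f_2(x,v))$ is strictly increasing at every point where it takes the positive value $h^*(x)$ (equivalently, the paper's $\p_\alpha F(x,\alpha^*)<0$ at every positive root $\alpha^*$ of $F(x,\alpha)=h^*(x)-\alpha+\alpha f_2(x,\alpha)$). The difference is in execution: the paper invokes the Implicit Function Theorem and the resulting explicit derivative formula to get $C^1$ regularity with bounded gradient, tacitly using uniqueness of the positive root to identify the IFT branch with the a priori merely measurable $\tilde u_2^*$; you instead make that uniqueness step explicit (your Rolle-type argument, using that $\phi(x,\cdot)$ is negative at any interior critical point), and you replace the IFT by a direct quantitative inversion with the uniform lower bound $\p_v\phi\ge\sigma_0\delta$, yielding a local Lipschitz estimate. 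Your route is slightly longer but more self-contained, and it fills in precisely the measurable-to-continuous passage that the paper's one-line appeal to the IFT glosses over.
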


\begin{proof}
We prove the case that $\tilde f_2(x,u)=f_2(x,u)$. The case that $\tilde f_2(x,u)=f_2^0(u)$ can be proved similarly.

Let $h^*(x)=\int_{\RR^N}\kappa(y-x)\tilde u_2^*(y)dy$ for $x\in\RR^N$. Then  $h^*(\cdot)$ is $C^1$ and
 has bounded first order partial derivatives. Let
\vspace{-0.05in}$$
 F(x,\alpha)=h^*(x)-\alpha+\alpha f_2(x,\alpha)\quad \forall x\in\RR^N,\,\, \alpha\in\RR.
 \vspace{-0.05in}$$
 Then $F:\RR^N\times\RR\to \RR$ is $C^1$ and $F(x,\tilde u_2^*(x))=0$ for each $x\in\RR^N$. If $\alpha^*>0$ is such that $F(x,\alpha^*)=0$,
 then
 \vspace{-0.05in}$$
 -1+f_2(x,\alpha^*)=-\frac{h^*(x)}{\alpha^*}<0
 \vspace{-0.05in}$$
 and hence
 \vspace{-0.05in}$$
 \p_\alpha F(x,\alpha^*)=-1+f_2(x,\alpha^*)+\alpha^*\p_u f_2(x,\alpha^*)<0.
 \vspace{-0.05in}$$
 By Implicit Function Theorem, $\tilde u_2^*(x)$ is $C^1$ in $x$. Moreover,
 \vspace{-0.05in}$$
 \frac{\p \tilde u_2^*(x)}{\p x_j}=\frac{\frac{\p h^*(x)}{\p x_j}}{-1+f(x,\tilde u_2^*(x))+\p_u f_2(x,\tilde u_2^*(x))\tilde u_2^*(x)}\quad \forall x\in\RR^N,\,\, 1\le j\le N.
 \vspace{-0.05in}$$
 Therefore, $\tilde u_2^*$ has  bounded first order partial derivatives. It
 then follows that $\tilde u_2^*(x)$ is uniformly continuous in
 $x\in\RR^N$ and then $\tilde u_2^*\in X_2^{++}$.
\end{proof}

\begin{lemma}
\label{tech-lm2}
Suppose that  $u^*_i(\cdot)\in X_i^{++}$ and  $u=u_i^*(\cdot)$ is a
stationary solution of (1.i) $(1\leq i\leq 3$).
Then
\vspace{-0.05in}$$
u_i^*(x)\to u_i^0\quad {\rm as}\quad \|x\|\to \infty.
\vspace{-0.05in}$$
\end{lemma}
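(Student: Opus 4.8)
The plan is to combine a translation--compactness argument with the maximum principle, handling the three cases $i=1,2,3$ in parallel. First I would record the elementary facts: since $u_i^*\in X_i^{++}$ is a stationary solution, ${\rm (H1)}$ provides positive constants $\sigma_0\le M_0$ with $\sigma_0\le u_i^*(x)\le M_0$ for all $x\in\mathcal{H}_i$ (any $M_0\ge\beta_0$ with $f_i(x,M_0)<0$ serves as an upper barrier, cf.\ Proposition~\ref{basic-global-existence}). Put $M=\limsup_{\|x\|\to\infty}u_i^*(x)$ and $m=\liminf_{\|x\|\to\infty}u_i^*(x)$, so $\sigma_0\le m\le M\le M_0$; it then suffices to show $M\le u_i^0$ and $m\ge u_i^0$, since these force $m=M=u_i^0$, which is exactly the assertion $u_i^*(x)\to u_i^0$. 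Note also that, by ${\rm (H2)}$ and ${\rm (H1)}$, $f_i^0$ is $C^2$, strictly decreasing on $\RR^+$, with $f_i^0(0)>0$ and $f_i^0(u)<0$ for $u\ge\beta_0$; hence $f_i^0$ has a unique positive zero $u_i^0\in(0,\beta_0)$, and for $u\ge 0$ one has $f_i^0(u)\ge 0$ if and only if $u\le u_i^0$.

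Next I would extract a limiting profile. Choose $x_n\in\mathcal{H}_i$ with $\|x_n\|\to\infty$ and $u_i^*(x_n)\to M$, and set $v_n(\cdot)=u_i^*(\cdot+x_n)$, so $\sigma_0\le v_n\le M_0$. In the case $i=1$, standard interior elliptic estimates (using that $u_i^*$ is a bounded classical solution of $\Delta u_i^*=-u_i^*f_1(x,u_i^*)$ and that $f_1$ is $C^2$) make $\{v_n\}$ precompact in $C^2_{\rm loc}$; in the case $i=2$, the $v_n$ all inherit the modulus of continuity of $u_i^*\in X_2$, so the Arzel\`a--Ascoli theorem applies; in the case $i=3$, a diagonal argument on $\ZZ^N$ applies. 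Passing to a subsequence, $v_n\to\bar u$ locally uniformly (in $C^2_{\rm loc}$ if $i=1$), with $\sigma_0\le\bar u\le M_0$. Since $\|x_n\|\to\infty$, ${\rm (H2)}$ forces $f_i(x+x_n,\cdot)=f_i^0(\cdot)$ on any fixed bounded set for all large $n$, so each $v_n$ solves $(1.i)$ with $f_i$ replaced by $f_i^0$ there; letting $n\to\infty$ shows $\bar u$ is a bounded stationary solution of the limit equation \eqref{limit-main-eq} (resp.\ \eqref{limit-nonlocal-eq}, \eqref{limit-discrete-eq}). Finally, $\bar u(0)=\lim_n u_i^*(x_n)=M$, while for each fixed $y$ we have $\bar u(y)=\lim_n u_i^*(y+x_n)\le\limsup_{\|x\|\to\infty}u_i^*(x)=M$; thus $\bar u$ attains its supremum $M$ at the point $0$.

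Now I would invoke the maximum principle at $0$. Evaluating the stationary limit equation for $\bar u$ at $x=0$, the dispersal term there is nonpositive: for $i=1$ it equals $\Delta\bar u(0)\le 0$ at an interior maximum; for $i=2$ it equals $\int_{\RR^N}\kappa(y)\bar u(y)\,dy-\bar u(0)\le\bar u(0)\int_{\RR^N}\kappa(y)\,dy-\bar u(0)=0$ since $\kappa\ge 0$, $\int_{\RR^N}\kappa=1$, and $\bar u\le\bar u(0)$; for $i=3$ it equals $\sum_{k\in K}a_k(\bar u(k)-\bar u(0))\le 0$ since $a_k>0$ and $\bar u(k)\le\bar u(0)$. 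Hence $M f_i^0(M)=\bar u(0)f_i^0(\bar u(0))\ge 0$, so $f_i^0(M)\ge 0$ and therefore $M\le u_i^0$. The symmetric argument, starting from a sequence along which $u_i^*\to m$, yields a bounded stationary solution $\underline{u}$ of the limit equation with $\underline{u}(0)=m$ and $m\le\underline{u}$; then the dispersal term at $0$ is nonnegative, $m f_i^0(m)\le 0$, hence $f_i^0(m)\le 0$ and $m\ge u_i^0$. Together with $m\le M$ this gives $u_i^0\le m\le M\le u_i^0$, i.e.\ $u_i^*(x)\to u_i^0$ as $\|x\|\to\infty$.

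I expect the only genuinely delicate step to be the compactness and passage to the limit in the case $i=1$: the interior regularity of the stationary PDE and the $C^2_{\rm loc}$ precompactness of the translates $v_n$, which is what makes it legitimate to evaluate $\Delta\bar u$ at a maximum point. The cases $i=2,3$ are easier in this respect, and for all three the concluding maximum-principle computation is short.
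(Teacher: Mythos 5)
Your proof is correct, but it reaches the conclusion by a genuinely different mechanism than the paper. Both arguments begin the same way: translate along a sequence $x_n$ with $\|x_n\|\to\infty$, use compactness (elliptic estimates for $i=1$, equicontinuity/Arzel\`a--Ascoli for $i=2$, a diagonal argument for $i=3$) to extract a limit profile solving the spatially homogeneous limit equation. The paper then finishes by identifying that profile with the constant $u_i^0$ via Proposition \ref{periodic-prop1} (uniqueness and global stability of the positive stationary solution of the limit equation), and in the nonlocal case it needs Lemma \ref{tech-lm0} to upgrade the pointwise limit to an element of $X_2^{++}$ before that proposition applies. You instead arrange the translation so that the limit profile attains its global supremum $M=\limsup_{\|x\|\to\infty}u_i^*(x)$ (resp.\ infimum $m$) at the origin, observe that the dispersal term is nonpositive (resp.\ nonnegative) there, and conclude $f_i^0(M)\ge 0\ge f_i^0(m)$, hence $m=M=u_i^0$ by strict monotonicity of $f_i^0$. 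This buys self-containedness: you never invoke the periodic-media machinery of Section 3.3, and your equicontinuity-of-translates argument for $i=2$ sidesteps Lemma \ref{tech-lm0} entirely. The paper's route is shorter given that Proposition \ref{periodic-prop1} is already quoted, and it identifies the full limit profile rather than just pinning down the extremal values; but your $\limsup$/$\liminf$ bookkeeping (using that $\bar u(y)=\lim_n u_i^*(y+x_n)\le M$ because $\|y+x_n\|\to\infty$ for fixed $y$) is carried out correctly, and the maximum-principle computation at the extremum is valid in all three cases.
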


\begin{proof}
We first prove that
 \vspace{-0.05in}$$
 u_1^*(x)\to u_1^0\quad {\rm as}\quad \|x\|\to\infty.
 \vspace{-0.05in}$$
 Assume that $u_1^*(x)\not \to u_1^0$ as $\|x\|\to\infty$.  Then there are $\epsilon_0>0$ and  $x_n\in\RR^N$ such that $\|x_n\|\to \infty$ and
\vspace{-0.05in}$$|u_1^*(x_n)- u_1^0|\geq \epsilon_0\quad {\rm for}\quad n=1,2,\cdots.
\vspace{-0.05in}$$
 By the uniform continuity of $u_1^*(x)$ in $x\in\RR^N$, without loss of generality,
 we may assume that there is a continuous function $\tilde u_1^*:\RR^N\to [\sigma_0,M_0]$ for some $\sigma_0,M_0>0$   such that
\vspace{-0.05in} $$
 u_1(x+x_{n})\to \tilde u_1^*(x)
 \vspace{-0.05in}$$
 as $n\to\infty$ uniformly in $x$  on bounded sets. Moreover, by a priori estimates for parabolic equations, $\tilde u_1^*$ is $C^{2+\alpha}$ for some
 $\alpha>0$  and we may also assume that
 \vspace{-0.05in}$$
 \Delta u_1(x+x_{n})\to \Delta \tilde u_1^*(x)
\vspace{-0.05in} $$
 as $n\to\infty$ uniformly in $x$  on bounded sets.
 This together with $f_1(x+x_n,u)\to f_1^0(u)$ as $n\to\infty$ uniformly in $x$ on bounded sets and in $u\in\RR$ implies that
 \vspace{-0.05in}$$
 \Delta \tilde u_1^*+\tilde u_1^* f_1^0(\tilde u_1^*)=0,\quad x\in\RR^N.
 \vspace{-0.05in}$$
 By Proposition \ref{periodic-prop1}, we  must have $\tilde u_1^*(x)\equiv u_1^*(x;f_1^0(\cdot))\equiv u_1^0$ and hence
 $u_1^*(x_{n})\to u_1^0$ as $n\to\infty$. This is a contradiction. Therefore  $u_1^*(x)\to u_1^0$ as $\|x\|\to \infty$.

 Next, we prove that
 \vspace{-0.05in}$$
 u_2^*(x)\to u_2^0\quad {\rm as}\quad \|x\|\to\infty.
 \vspace{-0.05in}$$
 Similarly, assume that $u_2^*(x)\not \to u_2^0$ as $\|x\|\to\infty$.  Then there are $\epsilon_0>0$ and  $x_n\in\RR^N$ such that $\|x_n\|\to \infty$ and
\vspace{-0.05in}$$|u_2^*(x_n)- u_2^0|\geq \epsilon_0\quad {\rm for}\quad n=1,2,\cdots.
\vspace{-0.05in}$$
 By the uniform continuity of $u_2^*(x)$ in $x\in\RR^N$, without loss of generality,
 we may assume that there is a continuous function $\tilde u_2^*:\RR^N\to [\sigma_0,M_0]$ for some $\sigma_0,M_0>0$   such that
\vspace{-0.05in} $$
 u_2(x+x_{n})\to \tilde u_2^*(x)
 \vspace{-0.05in}$$
 as $n\to\infty$ uniformly in $x$  on bounded sets.
 By the Lebesgue Dominated Convergence Theorem, we have
 \vspace{-0.05in}$$
 \int_{\RR^N}\kappa(y-x)\tilde u_2^*(y)dy-\tilde u_2^*(x)+\tilde u_2^*(x)f_2^0(\tilde u_2^*(x))=0\quad \forall x\in\RR^N.
 \vspace{-0.05in}$$
 By Lemma \ref{tech-lm0}, $\tilde u_2^*\in X_2^{++}$.
 By Proposition \ref{periodic-prop1} again, we have $\tilde u_2^*(x)\equiv u_2^0$ and then
 $u_2^*(x_{n})\to u_2^0$ as $n\to\infty$. This is a contradiction. Therefore  $u_2^*(x)\to u_2^0$ as $\|x\|\to \infty$.

 Finally, it can be proved by the similar arguments as in the case $i=2$ that
 \vspace{-0.05in}$$
 u_3^*(j)\to u_3^0\quad {\rm as}\quad \|j\|\to\infty.
 \vspace{-0.05in}$$
\end{proof}

\begin{lemma}
\label{tech-lm3} There is $u_i^-\in X_i^{++}$ such that for any
$\delta>0$ sufficiently small,  $u_i(t,x;\delta u_i^-)$ is
increasing in $t>0$ and $u_i^{-,*,\delta}\in X_i^{++}$, where
$u_i^{-,*,\delta}(x)=\lim_{t\to\infty}u_i(t,x;\delta u_i^-)$, and
hence $u=u_i^{-,*,\delta}(\cdot)$ is a stationary solution of (1.i) in $X_i^{++}$  $(i=1,2,3)$.
\end{lemma}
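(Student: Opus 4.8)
The plan is to build, for each $i$, a small time-independent sub-solution of (1.i) out of a spatially periodic principal eigenfunction, and then let the solution issued from it increase monotonically to the desired stationary solution. First I would apply Lemma~\ref{tech-lm1} with $\epsilon=\tfrac12 f_i^0(0)$ (admissible since $f_i^0(0)>0$ by ${\rm (H2)}$) to get $p\in\NN^N$ and $h_i\in X_{i,p}\cap C^N(\mathcal H_i,\RR)$ with $h_i(x)\le f_i(x,0)$ on $\mathcal H_i$ and $\lambda_i^0(h_i)\ge\hat h_i\ge f_i^0(0)-\epsilon=\tfrac12 f_i^0(0)>0$, and, for $i=1,2$, with the partial derivatives of $h_i$ up to order $N-1$ vanishing at a maximum point of $h_i$. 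For $i=1,3$ the principal eigenvalue of $\mathcal O_{i,0}$ with $a_i=h_i$ always exists, and for $i=2$ it exists by Proposition~\ref{existence-principal-eigenvalue}(1) thanks to that flatness condition. Let $\phi_i\in X_{i,p}^{++}$ be a positive principal eigenfunction, normalized by $\|\phi_i\|=1$, so that $\mathcal O_{i,0}\phi_i=\lambda_i^0(h_i)\phi_i$. I set $u_i^-:=\phi_i$, which lies in $X_i^{++}$ since a positive continuous periodic function has positive infimum.

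Next I would check that $\delta u_i^-$ is a sub-solution of (1.i) for all small $\delta>0$. Using the eigenvalue identity, the dispersal term applied to $\delta\phi_i$ equals $\delta(\lambda_i^0(h_i)-h_i)\phi_i$ in each of the three cases, so the right-hand side of (1.i) evaluated at the profile $\delta\phi_i$ is
\[
\delta\phi_i(x)\big[\lambda_i^0(h_i)-h_i(x)+f_i(x,\delta\phi_i(x))\big].
\]
Since $\p_u f_i<0$ we have $f_i(x,\delta\phi_i(x))\ge f_i(x,\delta\|\phi_i\|)\to f_i(x,0)\ge h_i(x)$ uniformly in $x\in\mathcal H_i$ as $\delta\to0^+$, by uniform continuity of $f_i$ on $\{\|x\|\le L_0\}\times[0,1]$ together with $f_i\equiv f_i^0$ off that slab. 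Hence for $\delta$ small the bracket exceeds $\tfrac14 f_i^0(0)>0$ everywhere, so $\delta\phi_i$ is a (strict) time-independent sub-solution of (1.i).

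Then by the comparison principle (Proposition~\ref{basic-comparison}(1)) applied to the sub-solution $\delta u_i^-$ and the solution $u_i(t,\cdot;\delta u_i^-)$ with the same initial datum, one gets $u_i(t,\cdot;\delta u_i^-)\ge\delta u_i^-$ for $t>0$; combining this with the semigroup property and Proposition~\ref{basic-comparison}(2)--(3) shows $t\mapsto u_i(t,\cdot;\delta u_i^-)$ is strictly increasing. Choosing $M\gg1$ with $M\ge\beta_0$ and $M\ge\delta\|\phi_i\|$, the constant $M$ is a super-solution, so $u_i(t,\cdot;\delta u_i^-)\le M$ for all $t$, and the monotone bounded pointwise limit $u_i^{-,*,\delta}(x):=\lim_{t\to\infty}u_i(t,x;\delta u_i^-)$ exists and satisfies $\delta\inf_{\mathcal H_i}\phi_i\le u_i^{-,*,\delta}\le M$. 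It remains to pass to the limit $t\to\infty$ in (1.i). For $i=1$, interior parabolic estimates give local $C^{2+\alpha}$ bounds on $\{u_1(t,\cdot;\delta u_1^-):t\ge1\}$, so the monotone limit is attained in $C^2_{\rm loc}$, $u_1^{-,*,\delta}$ solves $\Delta w+wf_1(\cdot,w)=0$, and the elliptic equation yields a bounded gradient, hence uniform continuity, so $u_1^{-,*,\delta}\in X_1^{++}$. For $i=2$, the Lebesgue Dominated Convergence Theorem passes the limit inside the convolution, so the measurable function $u_2^{-,*,\delta}$ (with values in $[\delta\inf\phi_2,M]$) satisfies the stationary nonlocal equation, and Lemma~\ref{tech-lm0} gives $u_2^{-,*,\delta}\in X_2^{++}$. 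For $i=3$, passage to the limit is immediate (the dispersal term is a finite sum), and boundedness gives $u_3^{-,*,\delta}\in X_3^{++}$.

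The main obstacle is the construction and verification of the sub-solution: the sign chase forces the use of the \emph{strict} positivity $\lambda_i^0(h_i)>0$ (a crude perturbative lower bound would not suffice because $\p_u f_i<0$ pushes $f_i(x,\delta\phi_i)$ below $f_i(x,0)$), and, for $i=2$, it is exactly the flatness condition in Lemma~\ref{tech-lm1} that guarantees the periodic principal eigenfunction exists. Once the sub-solution is in hand, the monotone convergence and the identification of the limit as a stationary solution in $X_i^{++}$ are routine, modulo the regularity inputs (parabolic estimates for $i=1$, Lemma~\ref{tech-lm0} for $i=2$).
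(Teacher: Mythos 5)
Your proposal is correct and follows essentially the same route as the paper: Lemma \ref{tech-lm1} produces the periodic $h_i\le f_i(\cdot,0)$ with $\lambda_i^0(h_i)>0$, the principal eigenfunction of $\mathcal O_{i,0,0}$ (whose existence for $i=2$ rests on the flatness condition via Proposition \ref{existence-principal-eigenvalue}) gives the sub-solution $\delta u_i^-$, and the monotone bounded orbit converges to a stationary solution identified in $X_i^{++}$ by parabolic estimates for $i=1$, Lemma \ref{tech-lm0} for $i=2$, and directly for $i=3$. You in fact supply the sub-solution verification that the paper leaves as "not difficult to verify."
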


\begin{proof} Fix $1\le i\le 3$.
Let $M^*>0$ be such that $f_i(x,M^*)<0$. Let $\epsilon>0$ be such that
\vspace{-0.05in}$$
f_i^0(0)-\epsilon>0.
\vspace{-0.05in}$$
By  Lemma \ref{tech-lm1}, there are $p\in\NN^N$ and $h_i(\cdot)\in X_{i,p}\cap C^N(\mathcal{H}_i,\RR)$ such that
\vspace{-0.05in}$$
f_i(x,0)\ge h_i(x),\,\,\, {\rm and}\,\,\,
\hat h_i\ge f_i^0(0)-\epsilon(>0).
\vspace{-0.05in}$$
Moreover, for $i=1$ or $2$,  the partial derivatives of $h_i(x)$ up to order $N-1$ are zero at some $x_0\in\mathcal{H}_i$ with
$h_i(x_0)=\max_{x\in\mathcal{H}_i}h_i(x)$.
Let $u_i^-$ be the positive principal eigenfunction of  $\mathcal{O}_{i,0,0}$ with $a_i(\cdot)=h_i(\cdot)$   and
$\|u_i^-\|=1$ (the existence of
$u_i^-$ is well known in the case that $i=1$ or $3$ and follows from  Proposition \ref{existence-principal-eigenvalue} in the case
that $i=2$). It is not difficult to verify that  $u=\delta u_i^-$ is a sub-solution of (1.i) for any $\delta>0$ sufficiently small. It then follows that
for any $\delta>0$ sufficiently small,
\vspace{-0.05in}$$
\delta u_i^-(\cdot)\le u_i(t_1,\cdot;\delta u_i^-)\leq u_i(t_2,\cdot;\delta u_i^-)\quad\forall 0<t_1<t_2.
\vspace{-0.05in}$$
This implies that there is a Lebesgue measurable function $u_i^{-,*,\delta}:\mathcal{H}_i\to [\sigma_0, M_0]$
for some $\sigma_0,M_0>0$  such that
\vspace{-0.05in}$$
\lim_{t\to\infty}u_i(t,x;\delta u_i^-)=u_i^{-,*,\delta}(x)\quad\forall x\in\mathcal{H}_i.
\vspace{-0.05in}$$
Moreover, by regularity and a priori estimates for parabolic
equations, $u_1^{-,*,\delta}\in X_1^{++}$. It is clear that
$u_3^{-,*,\delta}\in X_3^{++}$.  By Lemma \ref{tech-lm0},   $u_2^{-,*,\delta}\in
X_2^{++}$. Therefore for $1\leq i\leq 3$, $u_i^{-,*,\delta}\in
X_i^{++}$ and $u=u_i^{-,*,\delta}(\cdot)$  is a stationary solution of (1.i) in $X_i^{++}$ ($i=1,2,3$).
\end{proof}

\begin{lemma}
\label{tech-lm4} Let $M\gg 1$ be such that $f_i(x,M)<0$ for
$x\in\mathcal{H}_i$ $(i=1,2,3)$. Then $\lim_{t\to\infty}u_i(t,x;u_0)$ exists for every
$x\in\mathcal{H}_i$, where $u_0(x)\equiv M$. Moreover,
$u_i^{+,*,M}(\cdot)\in X_i^{++}$, where
$u_i^{+,*,M}(x):=\lim_{t\to\infty}u_i(t,x;u_0)$,  and hence $u=u_i^{+,*,M}(\cdot)$ is a
stationary solution of (1.i) in $X_i^{++}$ $(i=1,2,3)$.
\end{lemma}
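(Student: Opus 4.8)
The plan is to run the same argument that proves Lemma~\ref{tech-lm3}, but from above rather than from below. First I would observe that the constant function $u_0\equiv M$ is a super-solution of (1.i): since $f_i(x,M)<0$ for all $x\in\mathcal H_i$, we have $0\ge \Delta M + M f_i(x,M)$ (resp. the nonlocal/discrete analogues, where the dispersal term applied to a constant vanishes). By the comparison principle (Proposition~\ref{basic-comparison}), $u_i(t,\cdot;u_0)$ is non-increasing in $t$: for $0<t_1<t_2$ one compares $u_i(t_2,\cdot;u_0)=u_i(t_2-t_1,\cdot;u_i(t_1,\cdot;u_0))$ with $u_i(t_2-t_1,\cdot;u_0)=u_i(t_2,\cdot;u_0)\le u_i(t_1,\cdot;u_0)$, using $u_i(t_1,\cdot;u_0)\le M=u_0$. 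Moreover, applying Lemma~\ref{tech-lm3}, there is a sub-solution $\delta u_i^-\in X_i^{++}$ with $\delta u_i^-\le M$ for $\delta$ small, so $u_i(t,\cdot;u_0)\ge u_i(t,\cdot;\delta u_i^-)\ge \delta u_i^-$ for all $t>0$. Hence $u_i(t,x;u_0)$ decreases and stays bounded below by a positive constant $\sigma_0:=\delta\min_{x}u_i^-(x)>0$ (in the periodic case; in general $\sigma_0=\delta\inf_x u_i^-(x)>0$ since $u_i^-\in X_i^{++}$), so the pointwise limit $u_i^{+,*,M}(x):=\lim_{t\to\infty}u_i(t,x;u_0)$ exists for every $x\in\mathcal H_i$ and takes values in $[\sigma_0,M]$.

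Next I would upgrade this pointwise limit to an element of $X_i^{++}$ and check that it is a stationary solution, exactly as in Lemma~\ref{tech-lm3}. For $i=1$: monotonicity in $t$ together with standard parabolic a priori estimates (interior Schauder estimates on the bounded monotone family) gives convergence of $u_1(t,\cdot;u_0)$ in $C^{2}_{\mathrm{loc}}$, so $u_1^{+,*,M}$ is $C^2$, satisfies $\Delta u_1^{+,*,M}+u_1^{+,*,M}f_1(x,u_1^{+,*,M})=0$, and is uniformly continuous with $\inf_x u_1^{+,*,M}\ge\sigma_0>0$; thus $u_1^{+,*,M}\in X_1^{++}$. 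For $i=3$: the limit is automatically in $X_3^{++}$ since it is bounded and bounded below by $\sigma_0$, and passing to the limit in the (locally uniformly convergent) lattice ODE shows it is stationary. For $i=2$: by Lebesgue dominated convergence one passes to the limit inside the convolution to get
\[
\int_{\RR^N}\kappa(y-x)u_2^{+,*,M}(y)\,dy-u_2^{+,*,M}(x)+u_2^{+,*,M}(x)f_2(x,u_2^{+,*,M}(x))=0\quad\forall x\in\RR^N,
\]
and since $u_2^{+,*,M}:\RR^N\to[\sigma_0,M]$ is Lebesgue measurable, Lemma~\ref{tech-lm0} yields $u_2^{+,*,M}\in X_2^{++}$. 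In all three cases $u=u_i^{+,*,M}(\cdot)$ is a stationary solution of (1.i) lying in $X_i^{++}$.

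The only genuinely delicate point is the regularity/limit-passage step, and it is entirely analogous to what is already done in Lemma~\ref{tech-lm3} (and uses the same tools: a priori parabolic estimates for $i=1$, Lemma~\ref{tech-lm0} for $i=2$, trivial boundedness for $i=3$); the decrease of the time-orbit and the positive lower bound come for free from the comparison principle and from Lemma~\ref{tech-lm3}. I expect no new obstacle beyond bookkeeping. One subtlety worth a sentence in the write-up: the lower barrier $\delta u_i^-$ from Lemma~\ref{tech-lm3} requires $\delta$ small enough that $\delta u_i^-$ is a sub-solution \emph{and} $\delta u_i^-\le M$; both hold for all sufficiently small $\delta$, which suffices to pin down a uniform positive lower bound on $u_i^{+,*,M}$.
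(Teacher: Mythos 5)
Your proposal is correct and follows essentially the same route as the paper: $u\equiv M$ is a super-solution so the orbit decreases in $t$, it is bounded below by the increasing orbit of the sub-solution $\delta u_i^-$ from Lemma \ref{tech-lm3} (giving the positive lower bound), and the pointwise limit is upgraded to a stationary solution in $X_i^{++}$ by the same regularity tools used there (parabolic estimates for $i=1$, Lemma \ref{tech-lm0} for $i=2$, trivially for $i=3$). The only blemish is a garbled line in your monotonicity argument (you should write $u_i(t_2,\cdot;u_0)=u_i(t_1,\cdot;u_i(t_2-t_1,\cdot;u_0))\le u_i(t_1,\cdot;u_0)$ since $u_i(t_2-t_1,\cdot;u_0)\le M=u_0$), which is a bookkeeping slip rather than a gap.
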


\begin{proof} Fix $1\le i\le 3$.
For any $M>1$ with $f_i(x,M)<0$ for all $x\in\mathcal{H}_i$,
$u=M$ is a super-solution of (1.i). Hence
\vspace{-0.05in}$$
u_i(t_2,\cdot;M)\leq u_i(t_1,\cdot;M)\leq M\quad\forall 0\leq t_1<t_2.
\vspace{-0.05in}$$
It then follows that $\lim_{t\to\infty}u_i(t,x;M)$ exists for all
$x\in\RR^N$. Let $u_i^{+,*,M}(x)=\lim_{t\to\infty}u_i(t,x;M)$. We
have $u_i^{+,*,M}(x)\geq u_i^{-,*,\delta}(x)$ for $0<\delta\ll 1$.
By the similar arguments as in Lemma \ref{tech-lm3}, $u_i^{+,*,M}\in
X_i^{++}$ and $u=u_i^{+,*,M}(\cdot)$ is a stationary solution of
(1.i) in $X_i^{++}$ $(i=1,2,3)$.
\end{proof}

\begin{proof} [Proof of Theorem 2.1]
(1) Let $1\leq i\leq 3$ be given. First, by Lemmas \ref{tech-lm3} and \ref{tech-lm4}, (1.i) has
stationary solutions  in $X_i^{++}$. We claim that stationary
solution of (1.i) in $X_i^{++}$ is unique. In fact, suppose that
$u_i^{1,*}$ and $u_i^{2,*}$ are two stationary solutions of (1.i) in
$X_i^{++}$. Assume that
$u_i^{1,*}\not= u_i^{2,*}$. Then there is $\alpha^*>1$ such that
$\rho_i(u_i^{1,*},u_i^{2,*})=\ln\alpha^*>0$. Note that
\vspace{-0.05in}$$
\frac{1}{\alpha^*}u_i^{1,*}\leq u_i^{2,*}\leq\alpha^* u_i^{1,*}.
\vspace{-0.05in}$$
 By Lemma \ref{tech-lm2}, $\lim_{\|x\|\to
\infty}u_i^{1,*}(x)=u_i^0$ and
$\lim_{\|x\|\to\infty}u_i^{2,*}(x)=u_i^0$.
This implies that there is $\epsilon>0$ such that
\vspace{-0.05in}$$
\frac{1}{\alpha^*-\epsilon}u_i^{1,*}(x)\leq u_i^{2,*}(x)\leq (\alpha^*-\epsilon) u_i^{1,*}(x)\quad {\rm for}\quad \|x\|\gg 1.
\vspace{-0.05in}$$
By Proposition \ref{basic-comparison} and the arguments in Proposition \ref{basic-part-metric},
\vspace{-0.05in}$$
\frac{1}{\alpha^*}u_i^{1,*}(x)<u_i^{2,*}(x)<\alpha^* u_i^{1,*}(x)\quad \forall x\in\RR^N.
\vspace{-0.05in}$$
It then follows that for $0<\epsilon\ll 1$,
\vspace{-0.05in}$$
\frac{1}{\alpha^*-\epsilon}u_i^{1,*}(x)\leq u_i^{2,*}(x)\leq (\alpha^*-\epsilon) u_i^{1,*}(x)\quad \forall x\in\RR^N
\vspace{-0.05in}$$
and then $\rho_i(u_i^{1,*},u_i^{2,*})\leq \ln (\alpha^*-\epsilon)$, this is a contradiction.
Therefore $u_i^{1,*}=u_i^{2,*}$ and  (1.i) has a unique stationary solution $u_i^*$ in $X_i^{++}$.

(2) Fix $1\le i\le 3$.  For any $u_0\in X_i^{++}$, there is $\delta>0$ sufficiently
small and $M>0$ sufficiently large such that $\delta u_i^-\leq
u_0\leq M$ and $u=\delta u_i^-$ is a sub-solution of (1.i) ($u_i^-$ is as in Lemma \ref{tech-lm3}) and $u=M$
is a super-solution of (1.i). Then
\vspace{-0.05in}$$
\delta u_i^-\leq u_i(t,\cdot;\delta u_i^-)\leq u_i(t,\cdot;u_0)\leq u_i(t,\cdot;M)\leq M\quad \forall t\geq 0.
\vspace{-0.05in}$$
By (1), Lemmas \ref{tech-lm3} and \ref{tech-lm4}, and Dini's Theorem,
\vspace{-0.05in}$$
u_i(t,x;\delta u_i^-)<u_i^*(x)<u_i(t,x;M)\quad \forall t>0,\,\,
x\in\mathcal{H}_i
\vspace{-0.05in}$$
and
\vspace{-0.05in}$$
\lim_{t\to\infty}u_i(t,x;\delta
u_i^-)=\lim_{t\to\infty}u_i(t,x;M)=u_i^*(x)
\vspace{-0.05in}$$
uniformly in $x$ on bounded sets. It then follows that
\vspace{-0.05in}$$
\lim_{t\to\infty}u_i(t,x;u_0)=u_i^*(x)
\vspace{-0.05in}$$
uniformly in $x$ on bounded sets.

We claim that  $\|u_i(t,\cdot;u_0)-u_i^*(\cdot)\|\to 0$ as
$t\to\infty$.  Assume the claim is not true. Then  there are $\epsilon_0>0$,
$t_n\to\infty$, and $x_n$ with $\|x_n\|\to \infty$ such that
\vspace{-0.05in}$$|u_i(t_n,x_n;u_0)-u_i^*(x_n)|\geq \epsilon_0\quad \forall n\in\NN.
\vspace{-0.05in}$$
 Then by Lemma \ref{tech-lm2},
\vspace{-0.05in}$$|u_i(t_n,x_n;u_0)-u_i^0|\geq \frac{\epsilon_0}{2} \quad \forall n\gg 1.
\vspace{-0.05in}$$
 Let $\tilde
\delta>0$ and $\tilde M>0$  be such that
\vspace{-0.05in}$$
\tilde \delta\leq u_i(t,\cdot;u_0)\leq\tilde  M\quad \forall t\geq
0.
\vspace{-0.05in}$$
For any $\epsilon>0$, let $T>0$ be such that
\vspace{-0.05in}\begin{equation}
\label{thm2-1-eq1}
|u_i(T,\cdot;\tilde\delta,f_i^0(\cdot))-u_i^0|<\epsilon,\quad
|u_i(T,\cdot;\tilde M,f_i^0(\cdot))-u_i^0|<\epsilon.
\vspace{-0.05in}\end{equation}
Observe that
\vspace{-0.05in}$$\tilde \delta\leq u_i(t_n-T,x_n+x;u_0)\leq\tilde M
\vspace{-0.05in}$$
and
\vspace{-0.05in}$$
u_i(t_n,x_n+\cdot;u_0)=u_i(T,x_n+\cdot;u_i(t_n-T,\cdot;u_0))=
u_i(T,\cdot;u_i(t_n-T,\cdot+x_n;u_0),f_i(\cdot+x_n,\cdot))
\vspace{-0.05in}$$
for $n\gg 1$.
Then
\vspace{-0.05in}\begin{equation}
\label{thm2-1-eq2}
u_i(T,\cdot;\tilde \delta,f_i(\cdot+x_n))\leq u_i(t_n,x_n+\cdot;u_0)\leq
u_i(T,\cdot;\tilde M,f_i(\cdot+x_n,\cdot)).
\vspace{-0.05in}\end{equation}
Observe also that $f_i(x+x_n,u)\to f_i^0(u)$ as $n\to\infty$ uniformly in $(x,u)$ on bounded
sets. Then
by Proposition \ref{basic-convergence},
\vspace{-0.05in}$$
u_i(T,x;\tilde \delta,f_i(\cdot+x_n,\cdot))\to u_i(T,x;\tilde \delta,f_i^0(\cdot))
\vspace{-0.05in}$$
and
\vspace{-0.05in}$$
u_i(T,x;\tilde M,f_i(\cdot+x_n,\cdot))\to u_i(T,x;\tilde M,f_i^0(\cdot))
\vspace{-0.05in}$$
as $n\to\infty$ uniformly in $x$ on bounded sets.
This together with \eqref{thm2-1-eq1} implies that
\vspace{-0.05in}$$
|u_i(T,0;\tilde \delta,f_i(\cdot+x_n,\cdot))-u_i^0|<2\epsilon,\quad |u_i(T,0;\tilde M,f_i(\cdot+x_n,\cdot))-u_i^0|<2\epsilon\quad {\rm for}\quad n\gg 1
\vspace{-0.05in}$$
and then by \eqref{thm2-1-eq2},
\vspace{-0.05in}$$
|u_i(t_{n},x_{n};u_0)-u_i^0|<2\epsilon\quad {\rm for}\quad n\gg 1.
\vspace{-0.05in}$$
Hence $\lim_{n\to\infty}u_i(t_n,x_n;u_0)=u_i^0$,
which is a contradiction. Therefore
$\|u_i(t,\cdot;u_0)-u_i^*(\cdot)\|\to 0$ as $t\to\infty$.

(3) By Proposition \ref{basic-comparison},  for any $u_0\in
X_i^+\setminus\{0\}$, $u_i(t,x;u_0)>0$ for all $t>0$ and
$x\in\mathcal{H}_i$. Hence for any given $u_0\in
X_i^+\setminus\{0\}$, there are $\sigma>0$ and $r>0$ such that
$u_i(1,x;u_0)\ge \sigma$ for $x\in\mathcal{H}_i$ with $\|x\|\leq r$.
Note that $u_i(t,\cdot;u_0)=u_{i}(t-1,\cdot;u_i(1,\cdot;u_0))$ for
$t\ge 1$. (3) then follows from Theorem 2.3 (4) (see next section
for the proof of Theorem 2.3 (4)).
\end{proof}

\section{Spatial Spreading Speeds and Proofs of Theorems 2.2 and 2.3}

In this section, we explore the spreading speeds of \eqref{main-eq}, \eqref{nonlocal-eq}, and \eqref{discrete-eq},
and prove Theorems 2.2 and 2.3. Throughout this section, we assume ${\rm (H1)}$ and ${\rm (H2)}$.

We first prove  two lemmas.

\begin{lemma}
\label{technical-lm1} Let $\xi\in S^{N-1}$, $c>0$, $1\leq i\le
3$, and $u_0\in X_i^+$ be given.

\begin{itemize}
\vspace{-0.05in}\item[(1)] If $\liminf_{x\cdot\xi\leq ct,t\to\infty}u_i(t,x;u_0)>0$,
then for any $0<c^{'}<c$,
\vspace{-0.05in}$$
\limsup_{x\cdot\xi\leq c^{'}t,t\to\infty}|u_i(t,x;u_0)-u_i^*(x)|=0.
\vspace{-0.05in}$$

\vspace{-0.05in}\item[(2)] If $\liminf_{|x\cdot\xi|\leq
ct,t\to\infty}u_i(t,x;u_0)>0$, then for any $0<c^{'}<c$,
\vspace{-0.05in}$$
\limsup_{|x\cdot\xi|\leq c^{'}t,t\to\infty}|u_i(t,x;u_0)-u_i^*(x)|=0.
\vspace{-0.05in}$$

\vspace{-0.05in}\item[(3)] If $\liminf_{\|x\|\leq ct,t\to\infty}u_i(t,x;u_0)>0$,
then for any $0<c^{'}<c$,
\vspace{-0.05in}$$
\limsup_{\|x\|\leq c^{'}t,t\to\infty}|u_i(t,x;u_0)-u_i^*(x)|=0.
\vspace{-0.05in}$$
\end{itemize}
\end{lemma}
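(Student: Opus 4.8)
\emph{Strategy.} I would prove all three parts by a single contradiction argument, using only the comparison principle (Proposition~\ref{basic-comparison}), the uniform a priori bound on $u_i(t,\cdot;u_0)$ from the proof of Proposition~\ref{basic-global-existence}, convergence on compact subsets (Proposition~\ref{basic-convergence}), global stability of the positive stationary solution (parts (1) and (2) of Theorem~\ref{positive-solution-thm}, whose proofs do not use the present section) --- applied not only to (1.$i$) but also to each translate $f_i(\cdot+z,\cdot)$ and to the limit equation with $f_i^0$, all of which again satisfy (H1)--(H2) --- and the decay $u_i^*(x)\to u_i^0$ as $\|x\|\to\infty$ (Lemma~\ref{tech-lm2}). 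I carry out (1); parts (2) and (3) differ only in one geometric inequality. Suppose (1) fails: there are $\epsilon_0>0$, $t_n\to\infty$ and $x_n\in\mathcal{H}_i$ with $x_n\cdot\xi\le c't_n$ and $|u_i(t_n,x_n;u_0)-u_i^*(x_n)|\ge\epsilon_0$. Fix $\sigma_0>0$ and $T_1>0$ with $u_i(t,x;u_0)\ge\sigma_0$ whenever $t\ge T_1$ and $x\cdot\xi\le ct$, and $M_0>0$ with $f_i(x,M_0)<0$ for all $x$ and $u_0\le M_0$, so that $0\le u_i(t,\cdot;u_0)\le M_0$ for all $t\ge0$. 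Passing to a subsequence we may assume either $x_n\to x^*\in\mathcal{H}_i$ --- then set $\tilde f_i:=f_i(\cdot+x^*,\cdot)$, whose unique positive stationary solution equals $\bar u:=u_i^*(\cdot+x^*)$ by uniqueness in Theorem~\ref{positive-solution-thm} (since $u_i^*(\cdot+x^*)$ solves the translated equation) --- or $\|x_n\|\to\infty$ --- then set $\tilde f_i:=f_i^0$ and $\bar u:=u_i^0$; in either case $f_i(\cdot+x_n,\cdot)\to\tilde f_i$ uniformly on bounded sets.

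\emph{The core estimate.} Fix $\eta>0$. Since the constant functions $\sigma_0$ and $M_0$ lie in $X_i^{++}$, Theorem~\ref{positive-solution-thm}(2) for the $\tilde f_i$-equation supplies $T=T(\eta)$ with $\|u_i(T,\cdot;\sigma_0,\tilde f_i)-\bar u\|<\eta$ and $\|u_i(T,\cdot;M_0,\tilde f_i)-\bar u\|<\eta$. Set $w_n(t,x):=u_i(t_n-T+t,x_n+x;u_0)$, a solution of (1.$i$) with nonlinearity $f_i(\cdot+x_n,\cdot)$ and with $w_n(T,0)=u_i(t_n,x_n;u_0)$. Because $c'<c$, for $n$ large (so that $t_n-T\ge T_1$) the inequality $(x_n+x)\cdot\xi\le c't_n+\|x\|\le c(t_n-T)$ holds for all $\|x\|\le R_n:=(c-c')t_n-cT$, and $R_n\to\infty$; hence $w_n(0,x)\ge\sigma_0$ for $\|x\|\le R_n$, while $0\le w_n(0,\cdot)\le M_0$ everywhere. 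With $\psi_R(x):=\sigma_0\max\{0,\min\{1,R-\|x\|\}\}\in X_i^+$ we then have $\psi_{R_n}\le w_n(0,\cdot)\le M_0$, so Proposition~\ref{basic-comparison} gives
\[
u_i\!\big(T,0;\psi_{R_n},f_i(\cdot+x_n,\cdot)\big)\ \le\ u_i(t_n,x_n;u_0)\ \le\ u_i\!\big(T,0;M_0,f_i(\cdot+x_n,\cdot)\big).
\]
Since $\psi_{R_n}\to\sigma_0$ and $f_i(\cdot+x_n,\cdot)\to\tilde f_i$ uniformly on bounded sets, Proposition~\ref{basic-convergence} shows the two outer terms converge to $u_i(T,0;\sigma_0,\tilde f_i)$ and $u_i(T,0;M_0,\tilde f_i)$, each within $\eta$ of $\bar u(0)$. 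Hence $\limsup_n|u_i(t_n,x_n;u_0)-\bar u(0)|\le\eta$; combined with $u_i^*(x_n)\to\bar u(0)$ (by continuity if $x_n\to x^*$, by Lemma~\ref{tech-lm2} if $\|x_n\|\to\infty$) and the arbitrariness of $\eta$, this forces $u_i(t_n,x_n;u_0)-u_i^*(x_n)\to0$ along the subsequence, contradicting $|u_i(t_n,x_n;u_0)-u_i^*(x_n)|\ge\epsilon_0$. This proves (1).

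\emph{Parts (2), (3) and the main difficulty.} For (2) and (3) the argument is verbatim, with $|x_n\cdot\xi|\le c't_n$, resp.\ $\|x_n\|\le c't_n$, in place of $x_n\cdot\xi\le c't_n$; the only change is in the geometric step, where for $\|x\|\le R_n$ one now uses $|(x_n+x)\cdot\xi|\le|x_n\cdot\xi|+\|x\|\le c(t_n-T)$, resp.\ $\|x_n+x\|\le\|x_n\|+\|x\|\le c(t_n-T)$, which again holds for $n$ large since $c'<c$. The step I expect to need the most care is the compactness that underlies Proposition~\ref{basic-convergence} in the nonlocal case $i=2$, where there is no parabolic smoothing, together with the fact that the time $T(\eta)$ produced by Theorem~\ref{positive-solution-thm}(2) can be taken uniformly over the translated nonlinearities $f_i(\cdot+z,\cdot)$ and over the two possible limits $\tilde f_i$ --- which holds because that convergence is obtained by squeezing between a small multiple of the (fixed) periodic principal eigenfunction and the constant $M_0$, so the rate depends only on $\sigma_0$ and $M_0$.
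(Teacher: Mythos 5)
Your argument is correct and follows essentially the same route as the paper's proof: argue by contradiction, pass to a subsequence where $x_n\to x^*$ or $\|x_n\|\to\infty$, shift time back by a fixed $T$, squeeze the solution between translated initial data via the comparison principle, and invoke convergence on compact sets (Proposition~\ref{basic-convergence}) together with the stability statement of Theorem~\ref{positive-solution-thm} and Lemma~\ref{tech-lm2}. The only cosmetic differences are that you use a radially cut-off sub-initial-datum and a symmetric two-sided squeeze, whereas the paper cuts off in the $\xi$-direction and handles the upper bound by a separate global estimate; neither affects the substance.
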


\begin{proof}
(1) Suppose that $\liminf_{x\cdot\xi\leq
ct,t\to\infty}u_i(t,x;u_0)>0$. Then there are $\delta$ and $T>0$
such that
\vspace{-0.05in}$$
u_i(t,x;u_0)\geq \delta\quad \forall
(t,x)\in\RR^+\times\mathcal{H}_i,\,\, x\cdot\xi\leq c t,\,\,
t\geq T.
\vspace{-0.05in}$$
Assume that the conclusion of (1) is not true. Then there are $0<c^{'}<c$, $\epsilon_0>0$,
$x_n\in\mathcal{H}_i$, and $t_n\in\RR^+$ with $x_n\cdot\xi\leq c^{'}t_n$
and $t_n\to\infty$ such that
\vspace{-0.05in}\begin{equation}
\label{tech-lm-eq1}
|u_i(t_n,x_n;u_0)-u_i^*(x_n)|\geq \epsilon_0\quad \forall n\ge 1.
\vspace{-0.05in}\end{equation}
Without loss of generality, we may assume that $x_n\to x^*$ as
$n\to\infty$ in the case that $\{\|x_n\|\}$ is bounded (this implies
that $f_i(x+x_n,u)\to f_i(x+x^*,u)$ uniformly in $(x,u)$ in bounded
sets) and $f_i(x+x_n,u)\to f_i^0(u)$ as $n\to\infty$ uniformly in
$(x,u)$ on bounded sets in the case that $\{\|x_n\|\}$ is unbounded.

Let $\tilde u_0\in X_i^+$,
\vspace{-0.05in}$$
\tilde u_0(x)=\delta\quad \forall x\in\mathcal{H}_i.
\vspace{-0.05in}$$
By Theorem 2.1, there is $\tilde T>0$ such that
\vspace{-0.05in}\begin{equation}
\label{tech-lm-eq2}
u_i(\tilde T,x;u_0)-u_i^*(x)<\epsilon_0\quad \forall x\in\mathcal{H}_i,
\vspace{-0.05in}\end{equation}
\vspace{-0.05in}\begin{equation}
\label{tech-lm-eq3}
|u_i(\tilde T,x;\tilde
u_0,f_i(\cdot+x^*,\cdot))-u_i^*(x+x^*)|<\frac{\epsilon_0}{2},
\vspace{-0.05in}\end{equation}
and
\vspace{-0.05in}\begin{equation}
\label{tech-lm-eq4}
|u_i(\tilde T,x;\tilde u_0,f_i^0)-u_i^0|<\frac{\epsilon_0}{2}.
\vspace{-0.05in}\end{equation}

Without loss of generality, we may assume that $t_n-\tilde T\geq T$ for $n\geq 1$.
Let $\tilde u_{0n}\in X_i^+$ be such that $\tilde u_{0n}(x)=\delta$ for
$x\cdot\xi\leq \frac{c^{'}+c}{2}(t_n-\tilde T)$, $0\le \tilde u_{0n}(x)\le
\delta$ for $\frac{c^{'}+c}{2}(t_n-\tilde T)\le x\cdot\xi\le c
(t_n-\tilde T)$, and $\tilde u_{0n}(x)=0$ for $x\cdot\xi\geq c(t_n-\tilde T)$.
Then
\vspace{-0.05in}$$
u_i(t_n-\tilde T,\cdot;u_0)\geq \tilde u_{0n}(\cdot)
\vspace{-0.05in}$$
and hence
\vspace{-0.05in}\begin{align}
\label{tech-lm-eq5}
u_i(t_n,x_n;u_0)&=u_i(\tilde T,x_n;u_i(t_n-\tilde T,\cdot;u_0))\nonumber\\
&=u_i(\tilde T,0;u_i(t_n-\tilde T,\cdot+x_n;u_0),f_i(\cdot+x_n,\cdot))\nonumber\\
&\geq u_i(\tilde T,0;\tilde u_{0n}(\cdot+x_n),f_i(\cdot+x_n,\cdot)).
\vspace{-0.05in}\end{align}

Observe that $\tilde u_{0n}(x+x_n)\to \tilde u_0$ as $n\to\infty$
uniformly in $x$  on bounded sets.
In the case  that $f_i(x+x_n,u)\to f_i^0(u)$,  by Proposition \ref{basic-convergence},
\vspace{-0.05in}$$
u_i(\tilde T,0;\tilde u_{0n}(\cdot+x_n),f_i(\cdot+x_n,\cdot))\to
u_i(\tilde T,0;\tilde u_0,f_i^0(\cdot))
\vspace{-0.05in}$$
as $n\to\infty$.
By \eqref{tech-lm-eq4} and \eqref{tech-lm-eq5},
\vspace{-0.05in}\begin{equation}
\label{tech-lm-eq6}
u_i(t_n,x_n;u_0)>u_i^0-\epsilon_0/2\quad {\rm for}\quad n\gg 1.
\vspace{-0.05in}\end{equation}
By Lemma \ref{tech-lm2},
\vspace{-0.05in}\begin{equation}
\label{tech-lm-eq7}
u_i^0>u_i^*(x_n)-\epsilon_0/2\quad {\rm for}\quad n\gg 1.
\vspace{-0.05in}\end{equation}
By \eqref{tech-lm-eq2}, \eqref{tech-lm-eq6}, and \eqref{tech-lm-eq7},
\vspace{-0.05in}$$
|u_i(t_n,x_n;u_0)-u_i^*(x_n)|<\epsilon_0\quad {\rm for}\quad n\gg 1.
\vspace{-0.05in}$$
This contradicts to \eqref{tech-lm-eq1}.

In the case that  $x_n\to x^*$, by Proposition \ref{basic-convergence} again,
\vspace{-0.05in}$$
u_i(\tilde T,0;\tilde u_{0n}(\cdot+x_n),f_i(\cdot+x_n,\cdot))\to
u_i(\tilde T,0;\tilde u_0,f_i(\cdot+x^*,\cdot))
\vspace{-0.05in}$$
as $n\to\infty$.
By \eqref{tech-lm-eq3} and \eqref{tech-lm-eq5},
\vspace{-0.05in}\begin{equation}
\label{tech-lm-eq8}
u_i(t_n,x_n;u_0)>u_i^*(x^*)-\epsilon_0/2\quad {\rm for}\quad n\gg
1.
\vspace{-0.05in}\end{equation}
By the continuity of $u_i^*(\cdot)$,
\vspace{-0.05in}\begin{equation}
\label{tech-lm-eq9} u_i^*(x^*)>u_i^*(x_n)-\epsilon_{0}/2\quad {\rm
for}\quad n\gg 1.
\vspace{-0.05in}\end{equation}
By \eqref{tech-lm-eq2}, \eqref{tech-lm-eq8}, and \eqref{tech-lm-eq9},
\vspace{-0.05in}$$
|u_i(t_n,x_n;u_0)-u_i^*(x_n)|<\epsilon_0\quad {\rm for}\quad n\gg 1.
\vspace{-0.05in}$$
This contradicts to \eqref{tech-lm-eq1} again.

 Hence
\vspace{-0.05in}$$
\lim_{x\cdot\xi\leq c^{'}t,t\to\infty}|u_i(t,x;u_0)-u_i^*(x)|=0
\vspace{-0.05in}$$
for all $0<c^{'}<c$.

(2) It can be proved by the similar arguments as in (1).

(3) It can also be proved by the similar arguments as in (1).
\end{proof}

\begin{lemma}
\label{technical-lm2}
Let $M>0$ be such that $f_i(x,u)<0$ for $x\in \mathcal{H}_i$, $u\in[0,M]$, and $i=1,2,3$. Then for any $\epsilon>0$,
there are $p\in\NN^N$ and $g_i:\mathcal{H}_i\times\RR\to\RR$ such that for any $u\in\RR$, $g_i(\cdot,u)\in X_{i,p}$,
$g_i(\cdot,\cdot)$ satisfies (P1) and (P3), and
\vspace{-0.05in}$$
f_i(x,u)\ge g_i(x,u)\quad \forall x\in\mathcal{H}_i,\,\, u\in [0,M],
\vspace{-0.05in}$$
\vspace{-0.05in}$$
\hat g_i(0)\geq f_i^0(0)-\epsilon,
\vspace{-0.05in}$$
where $\hat g_i(\cdot)$ is as in \eqref{average-g-eq} $(i=1,2,3)$.
\end{lemma}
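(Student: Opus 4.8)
The plan is to build $g_i$ by separating the two variables: invoke Lemma \ref{tech-lm1} to obtain a spatially periodic function lying below $f_i(\cdot,0)$ whose spatial average is as close to $f_i^0(0)$ as desired, and then make $g_i$ affine (hence automatically monostable) in $u$, with slope equal to a uniform Lipschitz bound for $f_i$ in $u$ on $[0,M]$. This reduces the whole statement to Lemma \ref{tech-lm1} plus an elementary estimate.

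First I would fix $M$ as in the statement and set $L:=\sup\{\,|\p_u f_i(x,u)| : x\in\mathcal{H}_i,\ u\in[0,M]\,\}$. This is finite: by ${\rm (H2)}$ one has $\p_u f_i(x,u)=(f_i^0)'(u)$ whenever $\|x\|\ge L_0$, and $(f_i^0)'$ is bounded on $[0,M]$; on the complementary range $\{x\in\mathcal{H}_i:\|x\|\le L_0\}\times[0,M]$ the continuous function $\p_u f_i$ is bounded by compactness (in the lattice case $i=3$ this set is finite). Moreover $L>0$ by ${\rm (H1)}$, and enlarging $L$ afterwards does no harm.

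Next, given $\epsilon>0$, I would apply Lemma \ref{tech-lm1} with $\epsilon$ replaced by $\epsilon':=\min\{\epsilon,\tfrac12 f_i^0(0)\}$ to get $p\in\NN^N$ and a function $h_i\in X_{i,p}$, smooth on $\mathcal{H}_i$ (the function produced in the proof of Lemma \ref{tech-lm1} is assembled from a smooth cutoff, hence $C^\infty$), with $f_i(x,0)\ge h_i(x)$ for all $x\in\mathcal{H}_i$ and $\hat h_i\ge f_i^0(0)-\epsilon'$; in particular $\hat h_i\ge f_i^0(0)-\epsilon$ and $\hat h_i>0$. Then define
$$g_i(x,u):=h_i(x)-Lu,\qquad x\in\mathcal{H}_i,\ u\in\RR.$$
The verification is immediate: $g_i(\cdot,u)=h_i(\cdot)-Lu\in X_{i,p}$ for every $u$ since the $u$-term is constant in $x$; $g_i$ is $C^2$ on $\mathcal{H}_i\times\RR$; $\p_u g_i\equiv -L<0$ on $\mathcal{H}_i\times\RR^+$; and since $h_i$ is bounded, $g_i(x,u)<0$ for all $x$ once $u$ exceeds some $\alpha_0>0$, so ${\rm (P1)}$ holds. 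Also $\hat g_i(0)=\hat h_i>0$ (with the summation form of the average when $i=3$), which gives ${\rm (P3)}$ and the bound $\hat g_i(0)\ge f_i^0(0)-\epsilon$. Finally, for $u\in[0,M]$,
$$f_i(x,u)=f_i(x,0)+\int_0^u\p_u f_i(x,s)\,ds\ \ge\ f_i(x,0)-Lu\ \ge\ h_i(x)-Lu\ =\ g_i(x,u),$$
which is the remaining inequality.

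I do not expect a genuine obstacle. The only two points that call for a moment's care are the finiteness of $L$ (which rests on ${\rm (H2)}$ together with compactness of the ball $\{\|x\|\le L_0\}$, resp. finiteness of $\{\|j\|\le L_0\}$ in the discrete case) and the fact that the function $h_i$ delivered by Lemma \ref{tech-lm1} may be taken both smooth and with strictly positive average rather than merely $\hat h_i\ge f_i^0(0)-\epsilon$ — which is exactly why one first shrinks $\epsilon$ to $\epsilon'$. Note that the hypothesis on $M$ is not used in deriving any of the asserted conclusions; it only fixes the range $[0,M]$ on which the comparison $f_i\ge g_i$ is claimed.
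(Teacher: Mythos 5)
Your proof is correct and follows essentially the same route as the paper's: invoke Lemma \ref{tech-lm1} to get the periodic minorant $h_i$ of $f_i(\cdot,0)$ and then set $g_i(x,u)=h_i(x)-M_iu$ with $M_i$ a uniform bound on $|\p_u f_i|$ over $\mathcal{H}_i\times[0,M]$. Your added care — justifying finiteness of that bound via {\rm (H2)} plus compactness, and shrinking $\epsilon$ to guarantee $\hat h_i>0$ so that {\rm (P3)} actually holds — fills in details the paper leaves to the reader.
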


\begin{proof}
By   Lemma \ref{tech-lm1}, for any $\epsilon>0$, there are $p\in\NN^N$ and $h_i(\cdot)\in X_{i,p}\cap C^N(\mathcal{H}_i,\RR)$ such that
\vspace{-0.05in}$$
f_i(x,0)\geq h_i(x)\,\, \forall x\in\mathcal{H}_i
\quad {\rm and}\quad \hat h_i\geq f_i^0(0)-\epsilon
\vspace{-0.05in}$$
for $i=1,2,3$.
 Fix $1\le i\le 3$ and choose $M_i>0$ such that
\vspace{-0.05in}$$
f_i(x,u)\ge g_i(x,u):=h_i(x)-M_i u\quad {\rm for}\quad x\in\mathcal{H}_i,\,\,  0\leq u\leq M.
\vspace{-0.05in}$$
It is not difficult to see that $g_i(\cdot,\cdot)$  $(1\le i\le 3$) satisfy the lemma.
\end{proof}

In the following, $c_1^0(\xi)$, $c_2^0(\xi)$, and $c_3^0(\xi)$ are as in \eqref{c-1-0-eq}, \eqref{c-2-0-eq},
and \eqref{c-3-0-eq}, respectively ($\xi\in S^{N-1}$).
Observe that $\lambda_i(\mu,\xi,f_i^0(0))$ $(i=1,2,3$) exist and 
\vspace{-0.05in}\begin{equation*}
\begin{cases}
\lambda_1(\mu,\xi,f_1^0(0))=f_1^0(0)+\mu^2\cr
\lambda_2(\mu,\xi,f_2^0(0))=\int_{\RR^N}e^{-\mu z\cdot\xi}\kappa(z)dz-1+f_2^0(0)\cr
\lambda_3(\mu,\xi,f_3^0(0))=\sum_{k\in K}a_k(e^{-\mu k\cdot\xi}-1)+f_3^0(0).
\end{cases}
\vspace{-0.05in}\end{equation*}
If no confusion occurs, we may denote $\lambda_i(\mu,\xi,f_i^0(0))$ by
$\lambda_i(\mu,\xi)$ ($i=1,2,3$).
Observe also that $v_1(t,x)=e^{-\mu (x\cdot\xi-\frac{\lambda_1(\mu,\xi)}{\mu}t)}$,
$v_2(t,x)=e^{-\mu (x\cdot\xi-\frac{\lambda_2(\mu,\xi)}{\mu}t)}$, and
$v_3(t,j)=e^{-\mu (j\cdot\xi-\frac{\lambda_3(\mu,\xi)}{\mu}t)}$ are solutions of
\vspace{-0.05in}\begin{equation}
\label{aux-main-linear}
v_t(t,x)=\Delta v(t,x)+f_1^0(0)v(t,x),\quad x\in\RR^N,
\vspace{-0.05in}\end{equation}
\vspace{-0.05in}\begin{equation}
\label{aux-nonlocal-linear}
v_t(t,x)=\int_{\RR^N}\kappa(y-x)v(t,y)dy-v(t,x)+f_2^0(0)v(t,x),\quad x\in\RR^N,
\vspace{-0.05in}\end{equation}
and
\vspace{-0.05in}\begin{equation}
\label{aux-discrete-linear}
v_t(t,j)=\sum_{k\in K}a_k(v(t,j+k)-v(t,j))+f_3^0(0)v(t,j),\quad j\in\ZZ^N,
\vspace{-0.05in}\end{equation}
respectively.

\begin{proof}[Proof of Theorem 2.2]
Fix $\xi\in S^{N-1}$ and $1\le i\le 3$.
We first prove that for any $c^{'}>c_i^0(\xi)$ and $u_0\in X_i^+(\xi)$,
\vspace{-0.05in}\begin{equation}
\label{spreading-eq1}
\limsup_{x\cdot\xi\geq c^{'}t, t\to\infty} u_i(t,x;u_0)=0.
\vspace{-0.05in}\end{equation}
To this end,
take a $c$ such that $c^{'}>c>c_i^*(\xi)$.
Note that there is $\mu_i^*>0$ such that
\vspace{-0.05in}$$
c_i^0(\xi)=\frac{\lambda_i(\xi,\mu^*_i)}{\mu^*_i}
\vspace{-0.05in}$$
and there is $\mu\in (0,\mu^*_i)$ such that
\vspace{-0.05in}$$
c=\frac{\lambda_i(\mu,\xi)}{\mu}.
\vspace{-0.05in}$$
Take $d>M>0$  such that
\vspace{-0.05in}$$
u_0(x)\leq M\quad {\rm and}\quad  u_0(x)\leq d e^{-\mu x\cdot\xi}\quad \forall x\in\mathcal{H}_i,
\vspace{-0.05in}$$
\vspace{-0.05in}\begin{equation}
\label{thm2-2-eq1}
f_i(x,M)<0\quad \forall x\in\mathcal{H}_i,
\vspace{-0.05in}\end{equation}
and
\vspace{-0.05in}\begin{equation}
\label{thm2-2-eq2}
f_i(x,u)=f_i^0(u)\quad {\rm for}\quad x\cdot\xi\geq -\frac{1}{\mu}\ln \frac{M}{d} (>0).
\vspace{-0.05in}\end{equation}
Observe that by \eqref{thm2-2-eq1} and ${\rm (H1)}$, for $(t,x)\in (0,\infty)\times \mathcal{H}_i$ with $d e^{-\mu(x\cdot\xi-ct)}\geq M$, i.e.,
$x\cdot\xi\leq -\frac{1}{\mu}\ln \frac{M}{d}+ct$,
\vspace{-0.05in}$$f_i(x,de^{-\mu(x\cdot\xi-ct)})<0<f_i^0(0).
\vspace{-0.05in}$$
 By \eqref{thm2-2-eq2}, for $(t,x)\in (0,\infty)\times \mathcal{H}_i$
with $d e^{-\mu(x\cdot\xi-ct)}\le M$, i.e, $x\cdot\xi\ge  -\frac{1}{\mu}\ln \frac{M}{d}+ct$,
\vspace{-0.05in}$$
f_i(x,de^{-\mu (x\cdot\xi-ct)})=f_i^0(de^{-\mu (x\cdot\xi-ct)})\leq f_i^0(0).
\vspace{-0.05in}$$
It then follows that $u=d e^{-\mu(x\cdot\xi-ct)}$, which is a solution of \eqref{aux-main-linear} or \eqref{aux-nonlocal-linear}
or \eqref{aux-discrete-linear} if $i=1$ or $2$ or $3$,
 is  a super-solution of (1.i) and hence by Proposition \ref{basic-comparison},
\vspace{-0.05in}\begin{equation}
\label{thm2-2-eq3}
u_i(t,x;u_0)\leq de^{-\mu(x\cdot\xi-ct)}\quad \forall t>0\,\, x\in\mathcal{H}_i.
\vspace{-0.05in}\end{equation}
This implies that \eqref{spreading-eq1} holds.

Next, we prove that for any $c^{'}<c_i^0(\xi)$ and any $u_0\in X_i^+(\xi)$,
\vspace{-0.05in}\begin{equation}
\label{spreading-eq2}
\limsup_{x\cdot\xi\leq c^{'}t,t\to\infty}|u_i(t,x;u_0)-u_i^*(x)|=0.
\vspace{-0.05in}\end{equation}
To this end, take a $c\in\RR$ such that $c^{'}<c<c_i^0(\xi)$.
Let $M>0$ be such that $u_0(x)\leq M$ and $f_i(x,M)<0$ for all $x\in\mathcal{H}_i$.
Then $u\equiv M$ is a super-solution of (1.i) and
\vspace{-0.05in}$$
u_i(t,x;u_0)\le M\quad \forall t\geq 0,\, \, x\in\mathcal{H}_i.
\vspace{-0.05in}$$
For any $\epsilon>0$, let $g_i(\cdot,\cdot)$ be as in  Lemma \ref{technical-lm2}.
By Proposition \ref{periodic-prop3}, for $\epsilon>0$ sufficiently small,
\vspace{-0.05in}$$
c_i^*(\xi,g_i(\cdot,\cdot))\geq c_i^*(\xi,\hat g_i(\cdot))>c.
\vspace{-0.05in}$$
By Propositions \ref{basic-comparison} and \ref{periodic-prop2},
\vspace{-0.05in}$$
\liminf_{x\cdot\xi\le ct,t\to\infty} u_i(t,x;u_0)\ge \liminf_{x\cdot \xi\le ct,t\to\infty}u_i(t,x;u_0,g_i)>0.
\vspace{-0.05in}$$
\eqref{spreading-eq2} then follows from Lemma \ref{technical-lm1}.

By \eqref{spreading-eq1} and \eqref{spreading-eq2}, $c_i^*(\xi)$ exists and $c_i^*(\xi)=c_i^0(\xi)$
for $i=1,2,3$. Moreover, \eqref{spreading-eq0} holds
\end{proof}

\begin{proof}[Proof of Theorem 2.3]
(1) It can be proved by similar arguments in \cite[Theorem D(1)]{ShZh1}. For completeness, we provide a proof in the following.

Fix $\xi\in S^{N-1}$ and $1\le i\le 3$.
 Let $ u_0\in X_i^+$ satisfy that  $u_0(x)=0$ for $x\in\mathcal{H}_i$ with $|x\cdot\xi|\gg 1$.
 Then there are $u_0^+\in X_i^+(\xi)$ and $u_0^-\in X_i^+(-\xi)$ such that
\vspace{-0.05in} $$
 u_0(x)\leq u_0^\pm(x)\quad \forall x\in\mathcal{H}_i.
\vspace{-0.05in} $$
 By Proposition \ref{basic-comparison} and Theorem 2.2,
\vspace{-0.05in} $$
 \limsup_{x\cdot\xi\ge c^{'}t,t\to\infty}u_i(t,x;u_0)\leq \limsup_{x\cdot\xi\ge c^{'}t,t\to\infty}u_i(t,x;u_i^+)=0\quad \forall c^{'}>c_i^*(\xi)
\vspace{-0.05in} $$
and
\vspace{-0.05in} $$
 \limsup_{x\cdot(-\xi)\ge c^{'}t,t\to\infty}u_i(t,x;u_0)\leq \limsup_{x\cdot(-\xi)\ge c^{'}t,t\to\infty}u_i(t,x;u_i^-)=0\quad \forall c^{'}>c_i^*(-\xi)
 \vspace{-0.05in}$$
 It then follows that
 \vspace{-0.05in}$$
 \limsup_{|x\cdot\xi|\ge c^{'}t,t\to\infty}u_i(t,x;u_0)=0\quad \forall c^{'}>\max\{c_i^*(\xi),c_i^*(-\xi)\}.
 \vspace{-0.05in}$$

(2) Fix $\xi\in S^{N-1}$ and $1\le i\le 3$. For given $0<c^{'}<\min\{c_i^*(\xi),c_i^*(-\xi)\}$, take a $c>0$ such that
$c^{'}<c<\min\{c_i^*(\xi),c_i^*(-\xi)\}$. For given $u_0\in X_i^+ $ satisfying the condition in Theorem 2.3 (2), let $M>0$ be such that $u_0(x)\leq M$ and $f_i(x,M)<0$ for all $x\in\mathcal{H}_i$.
Then $u\equiv M$ is a super-solution of (1.i) and
\vspace{-0.05in}$$
u_i(t,x;u_0)\le M\quad \forall t\geq 0,\, \, x\in\mathcal{H}_i.
\vspace{-0.05in}$$
For any $\epsilon>0$, let $g_i(\cdot,\cdot)$ be as in  Lemma \ref{technical-lm2}.
By Proposition \ref{periodic-prop3}, for $\epsilon>0$ sufficiently small,
\vspace{-0.05in}$$
c_i^*(\xi,g_i(\cdot,\cdot))\geq c_i^*(\xi,\hat g_i(\cdot))>c.
\vspace{-0.05in}$$
By Propositions \ref{basic-comparison} and \ref{periodic-prop2},
\vspace{-0.05in}$$
\liminf_{|x\cdot\xi|\leq ct, t\to\infty} u_i(t,x;u_0)\geq \liminf_{|x\cdot\xi|\le ct,t\to\infty}u_i(t,x;u_0,g_i)>0.
\vspace{-0.05in}$$
It then follows from Lemma \ref{technical-lm1} that
\vspace{-0.05in}$$
\limsup_{|x\cdot\xi|\leq c^{'}t,t\to\infty}|u_i(t,x;u_0)-u_i^*(x)|=0.
\vspace{-0.05in}$$

(3) It can be proved by similar arguments as in \cite[Theorem E (1)]{ShZh1}.
For completeness again, we provide a proof in the following.

Fix $\xi\in S^{N-1}$ and $1\le i\le 3$.
 Let $c>\sup_{\xi\in S^{N-1}}c_i^*(\xi)$.
Let $u_0\in X_i^+$ be such that $u_0(x)=0$ for $\|x\|\gg 1$.
 Note that for every given $\xi\in S^{N-1}$, there is $\tilde u_0(\cdot;\xi)\in X_i^+(\xi)$ such that
$u_0(\cdot)\leq \tilde u_0(\cdot;\xi)$. By Proposition \ref{basic-comparison},
\vspace{-0.05in}$$
0\leq u_i(t,x;u_0)\leq u_i(t,x;\tilde u_0(\cdot;\xi))
\vspace{-0.05in}$$
for $t>0$ and $x\in\mathcal{H}_i$. It then follows from  Theorem 2.2 that
\vspace{-0.05in}$$
0\leq \limsup_{x\cdot\xi\geq ct,t\to\infty}u_i(t,x;u_0)\leq \limsup_{x\cdot\xi\geq ct,t\to\infty}u_i(t,x;\tilde
u_0(\cdot;\xi))=0.
\vspace{-0.05in}$$

Take any $c^{'}>c$. Consider all $x\in\mathcal{H}_i$ with $\|x\|=c^{'}$. By the compactness of
$\p B(0,c^{'})=\{x\in\mathcal{H}_i|\,\|x\|=c^{'}\}$, there are $\xi_1,\xi_2,\cdots,\xi_{L}\in S^{N-1}$ such that for every
$x\in \p B(0,c^{'})$, there is $l$ ($1\leq l\leq L$) such that $x\cdot \xi_l\geq c$. Hence for every $x\in\mathcal{H}_i$ with
$\|x\|\geq c^{'}t$, there is $1\leq l\leq L$ such that $x\cdot
\xi_l=\frac{\|x\|}{c^{'}}\Bigl(\frac{c^{'}}{\|x\|}x\Bigr)\cdot \xi_l\geq \frac{\|x\|}{c^{'}}c\geq ct$. By the
above arguments,
\vspace{-0.05in}$$
0\leq \limsup_{x\cdot\xi_l\geq ct,t\to\infty}u_i(t,x;u_0)\leq \limsup_{x\cdot\xi_l\geq
ct,t\to\infty}u_i(t,x;\tilde u_0(\cdot;\xi_l))=0
\vspace{-0.05in}$$
for $l=1,2,\cdots L$.
 This implies that
\vspace{-0.05in}$$
\limsup_{\|x\|\geq c^{'}t, t\to\infty}u_i(t,x;u_0)=0.
\vspace{-0.05in}$$
 Since $c^{'}>c$ and $c>\sup_{\xi\in
S^{N-1}}c_i^*(\xi)$ are arbitrary, we have that for $c>\sup_{\xi\in S^{N-1}}c_i^*(\xi)$,
\vspace{-0.05in}$$
\limsup_{\|x\|\geq ct, t\to\infty}u_i(t,x;u_0)=0.
\vspace{-0.05in}$$

(4) It can be proved by similar arguments as in (2). To be more precise,
for given $0<c^{'}<\min\{c_i^*(\xi)\,|\, \xi\in S^{N-1}\}$, take a $c>0$ such that
$c^{'}<c<\min\{c_i^*(\xi)\,|\,\xi\in S^{N-1}\}$.
 For given $u_0\in $ satisfying the condition in Theorem 2.3 (4), let $M>0$ be such that $u_0(x)\leq M$ and $f_i(x,M)<0$ for all $x\in\mathcal{H}_i$.
Then $u\equiv M$ is a super-solution of (1.i) and
\vspace{-0.05in}$$
u_i(t,x;u_0)\le M\quad \forall t\geq 0,\, \, x\in\mathcal{H}_i.
\vspace{-0.05in}$$
For any $\epsilon>0$, let $g_i(\cdot,\cdot)$ be as in  Lemma \ref{technical-lm2}.
By Proposition \ref{periodic-prop3}, for $\epsilon>0$ sufficiently small,
\vspace{-0.05in}$$
c_i^*(\xi,g_i(\cdot,\cdot))\geq c_i^*(\xi,\hat g_i(\cdot))>c.
\vspace{-0.05in}$$
By Propositions \ref{basic-comparison} and \ref{periodic-prop2},
\vspace{-0.05in}$$
\liminf_{\|x\|\leq ct, t\to\infty} u_i(t,x;u_0)\ge \liminf_{\|x\|\le ct,t\to\infty}u_i(t,x;u_0,g_i)>0.
\vspace{-0.05in}$$
It then follows from Lemma \ref{technical-lm1} that
\vspace{-0.05in}$$
\limsup_{\|x\|\leq c^{'}t,t\to\infty}|u_i(t,x;u_0)-u_i^*(x)|=0.
\vspace{-0.05in}$$
\end{proof}

\end{document}